\theoremstyle{definition}
\newtheorem* {theorem*}{Theorem}
\theoremstyle{definition}
\newtheorem{theorem}{Theorem}[section]
\theoremstyle{definition}
\theoremstyle{definition}
\newtheorem{observation}{Observation}[section]
\theoremstyle{definition}
\newtheorem{lemma}{Lemma}[section]
\theoremstyle{definition}
\theoremstyle{definition}
\theoremstyle{definition}
\newtheorem{conjecture}{Conjecture}[section]
\newtheorem{proposition}{Proposition}[section]
\newtheorem{corollary}{Corollary}[section]
\newtheorem* {remark}{Remark}
\newtheorem{example}{Example}[section]
\theoremstyle{definition}
\theoremstyle{definition}
\newtheorem* {remarks}{Remarks}
\numberwithin{equation}{section}
\newcommand{\larc}[1]{\hspace{-.4ex}\overset{#1}{\frown}\hspace{-.4ex}}
\def\({\left(}
\def\){\right)}
   \newcommand{\FF}{\mathbb{F}}  \newcommand{\CC}{\mathbb{C}}  \newcommand{\QQ}{\mathbb{Q}}   \newcommand{\cP}{\mathcal{P}} 
  \newcommand{\cS}{\mathcal{S}} 
\newcommand{\cJ}{\mathcal{J}}
\def\NN{\mathbb{N}}
    \def\ZZ{\mathbb{Z}} \def\Aut{\mathrm{Aut}}    \def\GL{\mathrm{GL}}       \def\spanning{\textnormal{-span}}   
\def\Irr{\mathrm{Irr}}  \def\wt{\widetilde}
   \newcommand{\fkn}{\mathfrak{n}}
\newcommand{\cM}{\mathcal{M}}\newcommand{\h}{\mathfrak{h}}
\def\fk{\mathfrak}
\def\barr{\begin{array}}
\def\earr{\end{array}}
\def\ba{\begin{aligned}}
\def\ea{\end{aligned}}
\def\be{\begin{equation}}
\def\ee{\end{equation}}
\def\cS{\mathcal{S}}
\def\UT{\mathrm{UT}}
\def\fkt{\fk{u}}
\renewcommand{\@makefnmark}{\mbox{\textsuperscript{}}}
\begin{document}
\title{Combinatorial methods of character enumeration for the unitriangular group}
\author{Eric Marberg\footnote{This research was conducted with government support under
the Department of Defense, Air Force Office of Scientific Research, National Defense Science
and Engineering Graduate (NDSEG) Fellowship, 32 CFR 168a.} \\ Department of Mathematics \\ Massachusetts Institute of Technology \\ \tt{emarberg@math.mit.edu}}
\date{}

\maketitle

\setcounter{tocdepth}{2}

\begin{abstract}
Let $\UT_n(q)$ denote the  group of unipotent $n\times n$ upper triangular matrices over a field with $q$ elements.  The degrees of the complex irreducible characters of $\UT_n(q)$ are precisely the integers $q^e$ with $0\leq e\leq \lfloor \frac{n}{2} \rfloor \lfloor \frac{n-1}{2} \rfloor$, and it has been conjectured that the number of irreducible characters of $\UT_n(q)$ with degree $q^e$ is a polynomial in $q-1$ with nonnegative integer coefficients (depending on $n$ and $e$). 
We  confirm this conjecture when $e\leq 8$ and $n$ is arbitrary by a computer calculation.  In particular, we describe an algorithm which allows us to derive explicit bivariate polynomials in $n$ and $q$ giving the number of irreducible characters of $\UT_n(q)$ with degree $q^e$ when $n>2e$ and $e\leq 8$.  When divided by $q^{n-e-2}$ and written in terms of the variables $n-2e-1$ and $q-1$, these functions are actually bivariate polynomials with nonnegative integer coefficients, suggesting an even stronger conjecture concerning such character counts.   As an application of these calculations, we are able to show that  all irreducible characters of $\UT_n(q)$ with degree $\leq q^8$ are Kirillov functions.  
We also discuss some related results concerning the problem of counting the irreducible constituents of individual supercharacters of $\UT_n(q)$.
  \end{abstract}

\section{Introduction}

Let $\FF_q$ be a finite field with $q$ elements and write $\UT_n(q)$ to denote the \emph{unitriangular group} of $n\times n$ upper triangular matrices over $\FF_q$ with all diagonal entries equal to 1.  This is a Sylow $p$-subgroup of the general linear group $\GL(n,\FF_q)$, where $p>0$ is the characteristic of $\FF_q$.  
This work concerns the problem of counting the irreducible characters of $\UT_n(q)$.

By a result of Isaacs \cite{I95}, the degrees of all (complex) irreducible characters of $\UT_n(q)$ are powers of $q$.  In fact, Huppert \cite{Huppert} has shown that the set of integers occurring as degrees of irreducible characters of $\UT_n(q)$ is $\{ q^e : 0\leq e \leq \cM_n\}$, where 
 \[ \cM_n = \lfloor \tfrac{n}{2} \rfloor \lfloor \tfrac{n-1}{2} \rfloor= \begin{cases} m(m-1),&\text{if $n=2m$,} \\  m^2,&\text{if }n=2m+1.\end{cases}\]   
One may therefore define  $N_n(q)$ and $N_{n,e}(q)$  for each positive integer $n$,  prime power $q>1$, and  integer $e$, as the numbers  
\[\ba
 N_n(q) &=\text{the number of irreducible characters (also, of conjugacy classes) of $\UT_n(q)$,} 
\\
 N_{n,e}(q) &=\text{the number of irreducible characters of $\UT_n(q)$ of degree $q^e$}.\ea\]   There is significant interest in the nature of these functions, in particular concerning whether or not they are polynomials in $q$. 
 Higman \cite{Higman} conjectured in 1960 that for each fixed $n$, the function $N_n(q)$ is a polynomial in $q$, and fourteen years later, Lehrer \cite{Lehrer} conjectured similarly that each function $N_{n,e}(q)$ is a polynomial in $q$.  Lehrer's conjecture certainly implies Higman's.
  More recently, Isaacs \cite{I} in 2007 put forth the even stronger conjecture that $N_{n,e}(q)$ is a polynomial function in $q-1$ with nonnegative integer coefficients. 

In the past fifteen years, a number of researchers have made significant progress in studying these conjectures.   Vera-Lopez and Arregi \cite{VeraLopez13} developed an algorithm to enumerate the conjugacy classes of $\UT_n(q)$ and used this to verify Higman's conjecture for $n\leq 13$ in 2003.  A few years later, Isaacs \cite{I} gave conjectural polynomials for $N_{n,e}(q)$ with $n\leq 9$.   Evseev \cite{E}  has recently calculated polynomials in $q$ giving $N_{n,e}(q)$ for $n\leq 13$; his methods confirm Isaacs's formulas.
The polynomials $N_{n,e}(q)$ with $n\leq 13$ have nonnegative integer coefficients when written as functions of $q-1$ \cite[Proposition 1.6]{E}, confirming Isaacs's conjecture for these values of $n$.

The results just mentioned derive essentially from the development of increasingly robust algorithms for enumerating the irreducible characters of $\UT_n(q)$ and related groups.  By contrast, investigations of 
 the functions $N_{n,e}(q)$ when $e$ is fixed and $n$ is arbitrary have depended to a much greater extent on ad hoc, manual calculations.  
 In the late 1990s, Marjoram \cite{Marj1,Marj2} computed bivariate polynomials in $n$ and $q$ giving the number of irreducible characters of $\UT_n(q)$ of the three lowest and highest degrees.  In his paper \cite{I}, Isaacs contributes some additional formulas.   
More recently, Loukaki  \cite{L} has computed $N_{n,e}(q)$ when $0\leq e\leq 3$, and Le \cite{Le} has rederived Marjoram's formulas for $N_{n,e}(q)$ when $ \cM_n -2 \leq e \leq \cM_n$ (this part of Marjoram's work was never published and required $n$ to be even; Le removes this condition).  

This paper began as an application of some recent observations concerning the 
constituents of supercharacters of algebra groups.  
These results, combined with the methods developed by Evseev in \cite{E}, lead us to an algorithm for computing $N_{n,e}(q)$ for small values of $e$ (and $n,q$ arbitrary).  Using this algorithm,
we are able to verify Isaacs's conjecture$-$that $N_{n,e}(q)$ is a polynomial in $q-1$ with nonnegative integer coefficients$-$for $e\leq 8$.  The formulas we obtain for $N_{n,e}(q)$ display a striking pattern not at all apparent in the antecedent calculations undertaken in \cite{I,L,Marj2}.  The following theorem summarizes our observations.

\begin{theorem}\label{intro} 
If $e \in \{1,\dots,8\}$, then for all integers $n>2e$ and prime powers $q>1$,
\[ N_{n,e}(q) = q^{n-e-2}  \sum_{i=1}^{2e}\frac{c_{e,i}!}{e!} \cdot f_{e,i}(n-2e-1)\cdot  (q-1)^i\] where $c_{e,i} = \frac{1}{2} + | \frac{1}{2} +e -i|$ 
 and each $f_{e,i}(x)$ is a polynomial with nonnegative integer coefficients, such that $\frac{c_{e,i}!}{e!} \cdot f_{e,i}(x)$ is a nonnegative integer for all nonnegative integer values of $x$. 
 \end{theorem}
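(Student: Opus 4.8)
The plan is to prove this by an explicit computation carried out separately for each of the eight values $e\in\{1,\dots,8\}$, so the real content is an algorithm that outputs $N_{n,e}(q)$ as a bivariate polynomial in $n$ and $q$, valid for all $n>2e$, together with a structural argument explaining why such a polynomial exists. Once the eight polynomials are in hand, the asserted form becomes a finite matter of inspection.

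First I would reduce the problem of counting irreducible characters to a problem about supercharacters, exploiting the fact that $\UT_n(q)$ is an algebra group. The irreducible characters can be organized according to the supercharacters in which they appear, and the degree-$q^e$ condition translates into a combinatorial condition on the labeled set-partition data indexing the relevant supercharacters. The crucial observation is a \emph{bounded support} property: an irreducible character of degree $q^e$ is carried by combinatorial data occupying a window of size depending only on $e$ and not on $n$, while the remaining $n-O(e)$ indices contribute only a fixed multiplicative factor. This is precisely what should produce the prefactor $q^{n-e-2}$ and the shift to the variable $n-2e-1$.

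Next, using this bounded-support property together with Evseev's method for counting irreducible constituents, I would argue that for $n>2e$ the quantity $N_{n,e}(q)$ is a genuine polynomial in $n$ and $q$, and I would turn this into an effective algorithm. For each fixed $e$, the algorithm enumerates the finitely many combinatorial patterns of bounded size that can support a degree-$q^e$ character, counts the constituents contributed by each pattern (this is where Evseev's reduction to rank computations over $\FF_q$ enters), and assembles these individually polynomial contributions into the total. Running this for $e=1,\dots,8$ yields eight explicit polynomials, which I would then normalize: divide by $q^{n-e-2}$, substitute $x=n-2e-1$, expand in powers of $q-1$, and read off the coefficient of $(q-1)^i$, checking that it equals $\tfrac{c_{e,i}!}{e!}\,f_{e,i}(x)$ with $f_{e,i}$ having nonnegative integer coefficients and $\tfrac{c_{e,i}!}{e!}\,f_{e,i}(x)\in\ZZ_{\ge 0}$ for every $x\in\ZZ_{\ge 0}$.

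I expect the main obstacle to lie not in this final verification, which is finite once the polynomials are computed, but in the structural step: proving that the count genuinely \emph{stabilizes} to a single polynomial in $n$ for all $n>2e$, so that no new combinatorial contributions emerge as $n$ grows, and establishing that the constituent-counting algorithm terminates and correctly tracks multiplicities. The precise normalization $c_{e,i}!/e!$ and the positivity of the $f_{e,i}$ appear to be phenomena that are observed in the output rather than predicted in advance, which is exactly why the theorem is asserted only in the range $e\le 8$ where the computation has actually been performed.
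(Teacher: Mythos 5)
Your proposal follows the same overall architecture as the paper: reduce $N_{n,e}(q)$ to counting irreducible constituents of supercharacters indexed by set partitions, localize the count to combinatorial data whose size is bounded in terms of $e$ alone, evaluate the finitely many local contributions with Evseev's algorithm, and verify the asserted shape of the answer by inspecting the eight resulting polynomials. The final inspection step, and the role of Evseev's reduction, are described accurately.

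The genuine gap is the step you yourself flag as ``the main obstacle'': the bounded-support (stabilization) property is asserted as an expectation but never argued, and in the paper it is the mathematical heart of the theorem. Concretely, the paper first factors $N_{\Lambda,e}(q)$ over the crossing-connected components of $\Lambda$ (Theorem \ref{products}), then re-expresses $N_{n,e}(q)$ via an ``outline'' decomposition as a sum over compositions of $n-1$ of products of quantities $\wt N_{m,e}(q)$ attached to windows of size $m$ (Theorem \ref{tilde}); the needed stabilization is exactly the statement that $\wt N_{m,e}(q)=0$ for $m>2e+1$ (Proposition \ref{instrumental}). This is not a soft structural fact. For $e\le 8$ it is obtained from a bootstrapping lemma --- an induction in which one strips an arc $(a,n)$ from a crossing-connected partition, writes the supercharacter as a tensor product $\chi_\gamma\otimes\chi_\kappa$, and bounds the minimal degree of a constituent from below via multiplicity estimates in the regular representation --- whose base case in turn requires a computer enumeration of all crossing-connected set partitions $\Lambda\vdash[n]$ with $d(\Lambda)-|\mathrm{Cr}(\Lambda)|\le 8$ for every $n\le 19$, verifying $N_{\Lambda,e}(q)=0$ whenever $e\le\tfrac{n-3}{2}$. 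Indeed, the paper only \emph{conjectures} the bounded-support property for general $e$; it is a theorem precisely in the range where this two-part argument has been carried out. Without an argument of this kind your algorithm has no guarantee that only finitely many patterns, independent of $n$, contribute, so the claim that the count ``stabilizes to a single polynomial for $n>2e$'' remains an unproved hypothesis rather than a consequence of the construction. A smaller omission: the passage from the window decomposition to a bivariate polynomial also needs the count of interleavings of the trivial windows with the nontrivial ones (a binomial coefficient in $n$, as in the sum (\ref{sum})); your ``fixed multiplicative factor'' for the remaining indices accounts for the power $q^{n-e-2}$ but not for this combinatorial factor, which is what actually produces the polynomial dependence on $n-2e-1$.
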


\begin{remark}
The case $e=0$ is notably excluded here; one can show without difficulty that $N_{n,0}(q) = q^{n-1}$.  Note that the values of $c_{e,i}$ for $i=1,\dots,2e$ are just the integers $e,e-1,\dots,1,1,\dots,e-1,e$.  
\end{remark}

We tabulate the polynomials $f_{e,i}(x)$ for $e \in \{1,\dots,8\}$ in an appendix.  The limiting factor in our calculations was simply their duration, and so it may be possible to push our methods further with some optimization. 
  Fascinatingly, the polynomials $f_{e,i}(x)$ for $e\in\{1,\dots,8\}$ have degrees $e+1-c_{e,i}=1,2,\dots,e ,e,\dots,2,1$ and their leading coefficients are 
   \[N(e,1),\ N(e,2),\ \dots,\ N(e,e),\ N(e,e),\ \dots, N(e,2),\ N(e,1)\] where $N(m,k) =\frac{1}{k} \binom{m-1}{k-1} \binom{m}{k-1}$ denotes the Narayana numbers (sequence A00126 in \cite{OEIS}).
The theorem and these observations evince a startling degree of order in the functions $N_{n,e}(q)$, suggesting the following  conjecture.

\begin{conjecture}\label{intro-conj}
Theorem \ref{intro} holds if $e$ is any positive integer.
\end{conjecture}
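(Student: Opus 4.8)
The plan is to lift the computer-assisted verification for $e \le 8$ to a uniform argument by enumerating stable combinatorial data rather than characters directly. Following the supercharacter method used in the paper, I would write $N_{n,e}(q) = \sum_\lambda m_e(\chi^\lambda)$, where $\lambda$ ranges over the $\FF_q$-labeled set partitions indexing the supercharacters of $\UT_n(q)$ and $m_e(\chi^\lambda)$ counts the irreducible constituents of $\chi^\lambda$ of degree $q^e$; this is legitimate because the supercharacters partition $\Irr(\UT_n(q))$. Since $\chi^\lambda$ can have a degree-$q^e$ constituent only when its dimension statistic is at least $e$, the relevant $\lambda$ are supported on a bounded number of arcs, and as $n$ grows each such $\lambda$ is obtained from a finite ``core pattern'' by spacing its endpoints within $\{1,\dots,n\}$. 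The first step is to make this precise: define an equivalence on labeled set partitions recording only the relative order of the occupied rows and columns together with the labels, and prove that for $n > 2e$ each class is a finite set of patterns independent of $n$, with $m_e$ constant on each class.

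Granting such a stable model, the dependence on $n$ and $q$ separates. A degree-$q^e$ character should be assembled from a fixed core configuration carrying $i$ essential nonzero field parameters, tensored with an arbitrary linear character on the complementary ``free'' part of the group: the essential parameters contribute the factor $(q-1)^i$, the free linear characters contribute the uniform factor $q^{n-e-2}$, and the number of inequivalent embeddings of a core pattern into the grid, controlled by the gaps between consecutive occupied indices, is a sum of products of binomial coefficients in $n$, hence a polynomial in $n$ of degree equal to the number of free gaps. The second step is then to show that, after the normalization of Theorem \ref{intro}, this count equals $\frac{c_{e,i}!}{e!} f_{e,i}(n-2e-1)$, with $f_{e,i}$ of the asserted degree and with nonnegative coefficients in the variable $n-2e-1$ --- a positivity that is not visible term by term and must be extracted from the combinatorics.

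The reported degrees $e+1-c_{e,i}$ and the Narayana leading coefficients then acquire a transparent combinatorial meaning that I would aim to prove directly. The degree of $f_{e,i}$ is the maximal number of free gaps available to a core with $i$ essential parameters, and its leading coefficient is the number of extremal cores realizing that maximum. Because $N(e,k)$ counts noncrossing partitions of $[e]$ into $k$ blocks (equivalently Dyck paths with $k$ peaks), the expectation is that the extremal cores biject with such noncrossing objects; constructing this bijection would yield the leading term explicitly. The palindromic symmetry pairing $i$ with $2e+1-i$ --- already visible in $c_{e,i} = c_{e,2e+1-i}$ --- should be explained by the flip automorphism $(i,j)\mapsto(n+1-j,\,n+1-i)$ of $\UT_n(q)$, which preserves character degrees and exchanges the two ends of the support, thereby matching the reversed Narayana coefficients.

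The hardest part will be producing the uniform combinatorial model in the first place. For each fixed $e$ the set of core patterns is finite, but there is no evident closed description as $e$ varies, and the rank conditions forcing the constituent degree to be exactly $q^e$ couple the field entries in ways that are not manifestly local; even the reduction $N_{n,e}(q)=\sum_\lambda m_e(\chi^\lambda)$ presupposes control of $m_e(\chi^\lambda)$, the constituent-counting problem the paper only partially resolves. Proving that $\frac{c_{e,i}!}{e!} f_{e,i}(x)$ takes nonnegative integer values amounts to supplying an intrinsic interpretation of these coefficients, which is exactly what the computation withholds; this is the same barrier that leaves Lehrer's polynomiality conjecture open, here confined to the more rigid stable range $n>2e$. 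A plausible way around it is to assemble the degree-$q^e$ characters of the whole family $\{\UT_n(q)\}_n$ into a single functorial object over a category of grid embeddings --- an $\mathrm{FI}$- or $\mathrm{VIC}$-type module --- whose finite generation would force polynomiality in $n$ abstractly, reducing the conjecture to identifying finitely many generators and checking nonnegativity by an $e$-uniform, but still substantial, combinatorial analysis.
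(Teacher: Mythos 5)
The statement you are trying to prove is Conjecture \ref{intro-conj}, which the paper explicitly leaves open: there is no proof of it anywhere in the text, only a verification of the $e\leq 8$ cases by computer. Your proposal is a research program rather than a proof, and you acknowledge as much, but it is worth pinpointing exactly where it fails to close the gap. The first missing ingredient is the stability statement you assert in your opening step --- that for $n>2e$ the relevant supercharacter shapes reduce to finitely many ``core patterns'' with $m_e$ constant on each class. This is precisely the content of Proposition \ref{instrumental}, and the paper's proof of that proposition is \emph{not} uniform in $e$: the technical lemma preceding it only bootstraps from the base case $e\leq \frac{n-3}{2}\leq f$, and that base case is established for $f=8$ by explicitly enumerating all crossing-connected $\Lambda\vdash[n]$ with $d(\Lambda)-|\Cr(\Lambda)|\leq 8$ for $n\leq 19$ and running Evseev's algorithm on each. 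The paper only \emph{conjectures} that the proposition holds for all $e$ (see the remark following it). Your proposal takes this finiteness for granted rather than proving it.

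The second and deeper obstruction is that even granting stability, one must show that the finitely many surviving $N_{\Lambda,e}(q)$ are polynomials in $q-1$ with the positivity needed to assemble the $f_{e,i}$. No formal or functorial argument (FI-module finite generation included) can deliver this, because the analogous positivity already fails at the level of individual shapes: the paper exhibits $\Lambda\vdash[13]$ with $N_{\Lambda,20}(q)=3(q-1)^4+7(q-1)^3+3(q-1)^2-(q-1)$. Moreover the paper warns of ``exotic'' irreducible characters with values outside $\QQ(\zeta_p)$ when $n\gg p$, which are invisible to the Kirillov-function counts underlying the whole method, and of Halasi's result that character counts of pattern groups (which arise as the crossing algebra groups $1+\fkcr_\Lambda(q)$ when $\Cr(\Lambda)$ is closed) need not be polynomial in $q$ at all. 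Any proof of the conjecture must confront these phenomena directly; your proposal's appeal to a bijection with noncrossing partitions for the leading terms and to finite generation for polynomiality does not engage with them. In short, you have correctly reconstructed the shape of the paper's $e\leq 8$ argument, but the two steps that would make it uniform in $e$ --- the vanishing bound for crossing-connected shapes and the positivity of the resulting polynomials --- are exactly the steps the paper cannot prove and neither can you by the means described.
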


Write $p$ for the characteristic of $\FF_q$. An important reason for caution in considering this conjecture is the existence of ``exotic'' irreducible characters of $\UT_n(q)$ when $n \gg p$.  By ``exotic,'' we mean characters taking values outside the cyclotomic field $\QQ(\zeta_p)$ where $\zeta_p = e^{2\pi i / p}$ is a primitive $p$th root of unity; our paper \cite{supp1} describes an explicit construction giving examples of  such characters for all primes $p$.  This phenomenon is an artifact of small characteristic, for when $n<2p$, the irreducible characters of $\UT_n(q)$ all have values in $\QQ(\zeta_p)$ \cite[Corollary 12]{Sangroniz}.
All irreducible characters counted by our methods have values in $\QQ(\zeta_p)$, and there is no evidence to suggest that the numbers of ``exotic'' irreducible characters should have nice polynomial properties.  
Thus, at the very least, it may be more plausible to consider Conjecture \ref{intro-conj} with the additional condition that  the characteristic of $\FF_q$ be sufficiently large.

\def\Ad{\mathrm{Ad}}

Not only do all the characters counted by our methods have values in $\QQ(\zeta_p)$; in fact, we can prove that they are all \emph{Kirillov functions}. 
By this we mean functions on $\UT_n(q)$ given by the following construction.  
Let $\fkt_n(q)$ denote the algebra of $n\times n$ upper triangular matrices over $\FF_q$ with all diagonal entries equal to 0.
There is a coadjoint action of $\UT_n(q)$ on the irreducible characters of the additive abelian group $\fkt_n(q)$, given by $g \colon \vartheta \mapsto \vartheta \circ \Ad(g)^{-1}$ where $\Ad(g)(X) = gXg^{-1}$ for $g \in \UT_n(q)$ and $X \in \fkt_n(q)$.
If $\Omega$ is a coadjoint orbit, then the corresponding \emph{Kirillov function} $\psi \colon \UT_n(q) \to \QQ(\zeta_p)$ is the complex-valued function 
\[ \psi(g) =|\Omega|^{-1/2} \sum_{\vartheta \in \Omega} \vartheta(g-1),\qquad\text{for }g\in \UT_n(q).
\] Kirillov \cite{K} conjectured that these functions comprise all the irreducible characters of $\UT_n(q)$, and we observed in \cite{supp0} that a recent calculation of Evseev \cite{E} shows that this conjecture holds if and only if $n\leq 12$. Here we prove an analogous but less precise result:

\begin{theorem}\label{intro2}
Every irreducible character of $\UT_n(q)$ of degree $\leq q^8$ is a Kirillov function.
\end{theorem}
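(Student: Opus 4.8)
The plan is to match the irreducible characters of degree $\le q^8$ against the Kirillov functions of the same degrees, using the counts $N_{n,e}(q)$ already in hand together with the formal properties of the orbit construction. I would begin by recording the standard facts. The Kirillov functions attached to distinct coadjoint orbits form an orthonormal family of class functions, and the function $\psi$ attached to an orbit $\Omega$ satisfies $\psi(1) = |\Omega|^{1/2}$, where $|\Omega| = q^{2e}$ is an even power of $q$; thus the Kirillov functions of degree $q^e$ correspond bijectively to the coadjoint orbits of size $q^{2e}$. Since the permutation actions of $\UT_n(q)$ on $\fkt_n(q)$ and on its dual have equal numbers of orbits, and the adjoint orbits on $\fkt_n(q)$ correspond to conjugacy classes via $g \mapsto g - 1$, the total number of Kirillov functions is exactly $N_n(q)$.

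The mechanism of the proof is an elementary rigidity property: \emph{a Kirillov function that happens to be a virtual character must be an irreducible character.} Indeed, if $\psi = \sum_{\chi} a_\chi \chi$ with every $a_\chi \in \ZZ$, then orthonormality gives $\sum_\chi a_\chi^2 = 1$, so a single coefficient equals $\pm 1$ and the rest vanish; as $\psi(1) = q^e > 0$ and each $\chi(1) > 0$, the sign is positive and $\psi \in \Irr(\UT_n(q))$. It therefore suffices to establish two statements: (i) every Kirillov function of degree $\le q^8$ lies in the $\ZZ$-span of $\Irr(\UT_n(q))$; and (ii) the number of coadjoint orbits of size $q^{2e}$ equals $N_{n,e}(q)$ for each $e \le 8$. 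Granting these, the Kirillov functions of degree $q^e$ are $N_{n,e}(q)$ pairwise distinct irreducible characters of degree $q^e$, hence exhaust the irreducible characters of that degree, which is precisely the assertion of the theorem.

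For both (i) and (ii) I would work inside the supercharacter theory, which the paper's algorithm already manipulates. Each supercharacter is a genuine character arising from a two-sided orbit, and a two-sided orbit is a union of coadjoint orbits, so each supercharacter is a nonnegative combination of the Kirillov functions it contains and each Kirillov function occurs in a unique supercharacter. I would run the enumeration not only to count irreducible constituents of each supercharacter of degree $\le q^8$ (which yields $N_{n,e}(q)$) but also to enumerate its coadjoint orbits by size, giving (ii) in parallel, and to record the transition between the Kirillov functions and the irreducible constituents. Verifying that this transition is realized over $\ZZ$ across the (finite) list of local configurations governing degrees $e \le 8$ yields (i). The small number of exceptional $n$ for which the general formulae of Theorem \ref{intro} do not apply would be settled by direct computation.

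The crux is (i): controlling the decomposition of small-degree Kirillov functions into irreducible characters. The statement cannot extend to all degrees, for combined with the norm-and-positivity argument it would force every Kirillov function to be irreducible, i.e. Kirillov's conjecture, which fails for $n > 12$. The argument must therefore genuinely use the bound $e \le 8$, and its viability rests on the finiteness of the relevant orbit configurations that the algorithm enumerates; once (i) is secured, the counting input (ii) and the rigidity property close the argument formally.
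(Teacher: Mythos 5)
Your overall strategy is genuinely different from the paper's, and unfortunately both of its pillars are left unproven; each one is essentially as hard as the theorem itself. Take (ii) first: the degree-refined equality between the number of coadjoint orbits of size $q^{2e}$ and $N_{n,e}(q)$ is not ``already in hand.'' What is known unconditionally is only that $\Irr(G,\chi_\lambda)$ and $\Kir(G,\chi_\lambda)$ have equal \emph{total} cardinality (Otto's theorem, cited in Section \ref{constits}); the refinement by degree is asserted in the paper only under a large-characteristic hypothesis, via Sangroniz. The paper explicitly warns that nothing forces the number of Kirillov functions and the number of irreducible characters of a given degree to agree for a general algebra group (Jaikin-Zapirain's counterexample), and for $n\geq 13$ there really are irreducible characters of $\UT_n(q)$ that are not Kirillov functions, so some non-Kirillov degrees must be compensated by reducible Kirillov functions elsewhere; that this bookkeeping balances degree-by-degree below $q^8$ is a nontrivial claim needing its own argument. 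Without (ii), even a complete proof of (i) would only show that the low-degree Kirillov functions form a \emph{subset} of $\Irr(\UT_n(q))$, not that they exhaust the irreducible characters of those degrees.

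The deeper problem is (i), which you correctly identify as the crux but do not actually supply. The computational engine available here is Evseev's algorithm, which enumerates \emph{well-induced} irreducible characters of the crossing algebra groups; it does not compute coadjoint orbits of $\fkt_n(q)^*$ by size, nor the transition matrix between Kirillov functions and irreducible characters, so the verification you propose to ``run in parallel'' is not a computation the existing machinery performs. The paper closes the argument by a different and more direct route that avoids (i) and (ii) entirely: Theorem \ref{count} transfers the Kirillov property of the degree-$q^e$ constituents of $\chi_\lambda$ to the irreducible characters (with a prescribed central character) of the quotient $S_\lambda/K_\lambda\cong 1+\wt\fkcr_\Lambda(q)$; Proposition \ref{instrumental} and the outline/tensor factorization (Lemmas \ref{obs-las} and \ref{form}, plus the final lemma of Section 3.4) reduce to finitely many crossing algebra groups when $e\leq 8$; and for each of these, the success of Evseev's unmodified algorithm certifies that all irreducible characters are well-induced, hence Kirillov by \cite[Theorem 5.5]{AndreAdjoint}. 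Your rigidity observation (a norm-one virtual character with positive degree is irreducible) is correct and would indeed finish the proof if (i) and (ii) were granted, but as it stands the proposal replaces the theorem by two unproved statements of comparable difficulty.
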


The upper bound of $q^8$ is likely not optimal, as 
the smallest known degree of an irreducible character of $\UT_n(q)$ not given by a Kirillov function is $q^{16}$ (see \cite{supp1}). 

We derive these results by considering the more general problem of enumerating the irreducible constituents of the \emph{supercharacters} of $\UT_n(q)$.
Discovered by Andr\'e \cite{Andre,Andre1}, the supercharacters of $\UT_n(q)$ are a family of often reducible characters whose irreducible constituents partition the set of all irreducible characters of the group.  Analogous to the way that each irreducible character of the symmetric group $S_n$ has a shape given by a partition of $n$, each supercharacter of $\UT_n(q)$ has a shape given by a set partition of $[n] \overset{\mathrm{def}} = \{1,2,\dots,n\}$.  The number of supercharacters of $\UT_n(q)$ with a given shape is a power of $q-1$, and the group of automorphisms of $\UT_n(q)$ induced by the diagonal subgroup of $\GL(n,\FF_q)$ 
acts transitively on the set of such
supercharacters \cite[Observation 3.1]{supp1}.
For each positive integer $n$,  prime power $q$, integer $e$, and set partition $\Lambda$ of $[n]$,   we may thus define  $N_\Lambda(q)$ and $N_{\Lambda,e}(q)$ as the nonnegative numbers 
\[\ba N_\Lambda(q)=\ & \text{the number of irreducible constituents of any supercharacter of $\UT_n(q)$ with shape  $\Lambda$}, \\
N_{\Lambda,e}(q) =\ &\text{the number of irreducible constituents  of degree $q^e$ 
of any supercharacter of $\UT_n(q)$} \\ &\text{with shape $\Lambda$}.
\ea\] 
%
\def\fkcr{\mathfrak{C}}
Within this framework,  our main results are as follows:
\begin{enumerate}
\item[(a)] We describe a simple construction which attaches to each set partition $\Lambda$ a nilpotent $\FF_q$-algebra $\wt\fkcr_\Lambda(q)$ generated as a vector space by $\Lambda$ and its crossings.  

\item[(b)] We show that $N_{\Lambda,e}(q)$ counts the number of irreducible representations of the corresponding algebra group $1+\wt\fkcr_\Lambda(q)$ with a certain degree and central character.  Evseev's {\sc{Magma}} implementation \cite{implement} of the algorithm he describes in \cite{E} may be used to compute these counts as functions in $q$.

\item[(c)] We define a decomposition of a set partition into ``connected components'' and prove using (b) that $N_{\Lambda,e}(q)$ factorizes according to this decomposition.  (There exists already a notion of a connected set partition; we define something slightly more restrictive which we call \emph{crossing-connected}.)

\item[(d)] It follows from (c) that the functions $N_{\Lambda,e}(q)$ are completely determined by the cases where $\Lambda$ is crossing-connected. 
We show more strongly that for small values of $e$, only a finite number of crossing-connected set partitions $\Lambda$ have $N_{\Lambda,e}(q) \neq 0$.  This allows us to compute $N_{n,e}(q)$ with $e$ fixed and $n,q$ arbitrary using (b) and (c).

\end{enumerate}
These items will allow us to prove Theorems \ref{intro} and \ref{intro2}.  Our calculations suggest as well the following analogue of Lehrer's conjecture:
\begin{conjecture}\label{ours} 
%
For each set partition $\Lambda \vdash[n]$ and integer $e\geq 0$, the function $N_{\Lambda,e}(q)$ is a polynomial in $q$ with integer coefficients.
\end{conjecture}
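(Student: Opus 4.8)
The plan is to reduce Conjecture \ref{ours} to a polynomiality statement about a single algebra group and then attack that statement geometrically. First I would apply the connected-component decomposition of item (c). If $\Lambda$ splits into crossing-connected components $\Lambda_1,\dots,\Lambda_k$, then $\wt\fkcr_\Lambda(q)$ decomposes as a direct sum and the group $1+\wt\fkcr_\Lambda(q)$ as a direct product, so its irreducible characters are outer tensor products and their degrees multiply. Counting by total degree $q^e$ thus yields the convolution
\[ N_{\Lambda,e}(q) \;=\; \sum_{e_1+\cdots+e_k=e}\ \prod_{j=1}^k N_{\Lambda_j,e_j}(q). \]
Since this is a finite sum of finite products, and polynomials in $q$ with integer coefficients are closed under sums and products, it suffices to prove the conjecture when $\Lambda$ is crossing-connected.

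Next I would invoke item (b) to recast the problem as character theory of a fixed algebra group: $N_{\Lambda,e}(q)$ equals the number of irreducible characters of $G := 1+\wt\fkcr_\Lambda(q)$ of degree $q^e$ lying over a prescribed central character. The goal becomes showing that this refined count is a polynomial in $q$ with integer coefficients. I would attempt this via the recursive ``peeling'' strategy underlying Evseev's algorithm: build $G$ by adjoining the basis elements of $\wt\fkcr_\Lambda(q)$ one at a time and track how the number of irreducibles of each degree and central character evolves. Each step branches according to whether certain structure constants vanish over $\FF_q$, and the resulting totals can be organized, via inclusion--exclusion and orthogonality of the characters of the abelian center, into a $\ZZ$-linear combination of the numbers of $\FF_q$-points of explicitly defined affine schemes cut out by the bilinear equations coming from the commutator map on $\wt\fkcr_\Lambda(q)$.

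The crux, and the step I expect to be the genuine obstacle, is to prove that each of these point counts is polynomial in $q$. The cleanest route would be to exhibit a cell decomposition of each parametrizing variety into pieces isomorphic to products of affine spaces and tori, whose counts $q^d(q-1)^r$ are automatically polynomials in $q$ with integer coefficients. In general no such decomposition need exist, since point counts of arbitrary schemes over $\ZZ$ are not polynomial in $q$, and this is precisely why the statement is hard: Conjecture \ref{ours} refines Lehrer's conjecture. Indeed, summing $N_{\Lambda,e}(q)$ over shapes $\Lambda\vdash[n]$, weighted by the number of supercharacters of each shape (a power of $q-1$), recovers $N_{n,e}(q)$, so a general proof is at least as difficult as Lehrer's still-open conjecture.

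The realistic hope is that the rigid combinatorial structure of $\wt\fkcr_\Lambda(q)$ --- an algebra spanned by the arcs of $\Lambda$ together with their crossings --- supplies enough control to carry out the induction. Concretely, I would search for a grading or filtration of $\wt\fkcr_\Lambda(q)$ indexed by the crossings, refining the peeling above so that at every stage the fibres of the commutator map are affine-linear and hence contribute only factors of the form $q^d(q-1)^r$. Until such structure is established, the evidence for the conjecture remains the case-by-case computation of these counts as explicit functions of $q$ through Evseev's {\sc Magma} implementation, exactly as used to evaluate the counts in part (b).
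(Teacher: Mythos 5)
You have not proved the statement, but neither does the paper: this is Conjecture \ref{ours}, which remains open. The paper only \emph{verifies} it for $n\leq 13$ (by inspection for $n\leq 6$ using Proposition \ref{eval}, and by running Evseev's {\sc Magma} implementation on the crossing algebras for $7\leq n\leq 13$), and it explicitly cautions that the conjecture may well be false for large $n$ and small characteristic because of exotic characters. Your reductions do track the paper's: the convolution over crossing-connected components is Theorem \ref{products} (and its corollary reduces the conjecture to crossing-connected $\Lambda$), and the recasting as a count of irreducible characters of $1+\wt\fkcr_\Lambda(q)$ with prescribed degree and central character is Theorem \ref{ref}. But you candidly identify the remaining step --- polynomiality of the resulting point counts --- as an unresolved obstacle, so the proposal is a research plan, not a proof. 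That is the correct assessment of the state of the art; the paper even points to Halasi's non-polynomial pattern groups as a reason the geometric step could genuinely fail.

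One concrete inaccuracy in your reduction: you derive the convolution formula by asserting that $\wt\fkcr_\Lambda(q)$ decomposes as a direct sum over the crossing-connected components and that $1+\wt\fkcr_\Lambda(q)$ is the corresponding direct product. This is false as stated, because $\wt\fkcr_\Lambda(q)$ is a central extension of $\fkcr_\Lambda(q)$ by a \emph{single} one-dimensional center $\FF_q z_\Lambda$; the components share this center, so the group is not the direct product of the groups $1+\wt\fkcr_{\Lambda_j}(q)$, and the central-character condition in Theorem \ref{ref} couples the factors. The paper avoids this by proving the factorization upstairs in $\UT_n(q)$: Lemma \ref{form} splits the supercharacter as a tensor product $\chi_\lambda=\chi_\alpha\otimes\chi_\beta$, and Lemma \ref{obs-las} (using $\langle\chi,\chi\rangle=\langle\alpha,\alpha\rangle\langle\beta,\beta\rangle$, which follows from the disjointness of the crossing sets via (\ref{fact})) shows the irreducible constituents multiply bijectively; Lemma \ref{lem2} then handles the renormalization to standardized components. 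If you want to keep your crossing-algebra route, you would need to replace the direct-product claim with this tensor-product argument or with a careful analysis of characters of the amalgamated central extension.
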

As before, we lack much evidence that this statement should be true in general, given the existence of irreducible characters of $\UT_n(q)$ with values in arbitrarily large cyclotomic fields.  
However, using (a)-(d),  we will verify this conjecture for $n\leq 6$ by inspection and for $n\leq 13$ via a computer calculation.  In so doing, we will discover that the analog of Isaacs's conjecture for $N_{\Lambda,e}(q)$ does not hold: there are integers $e$ and set partitions $\Lambda$ of $[n]$ when $n\geq 13$ for which $N_{\Lambda,e}(q)$ is a polynomial in $q-1$ with both positive and negative integer coefficients.

\subsection*{Acknowledgements}

I  thank the anonymous referees for their helpful remarks and suggestions.

\section{Preliminaries}
\label{prelim-sect}

Here we briefly  establish our notational conventions and discuss in slightly greater detail    the constructions mentioned in the introduction.


Given a finite group $G$, we let $\langle\cdot,\cdot\rangle_G$ denote the standard inner product on the complex vector space of functions $G \to \CC$, defined by 
$ \langle f,g\rangle_G = \frac{1}{|G|} \sum_{x \in G} f(x) \overline{g(x)}$.  If the context is clear, we usually omit the subscript. 
Write $\Irr(G)$ for the set of complex irreducible characters of $G$, or equivalently the set of characters $\chi$ of $G$ with $\langle \chi,\chi \rangle_G = 1$.  A function $ G\to \CC$ is then a character if and only if it is a nonzero sum of irreducible characters with nonnegative integer coefficients.
A character $\psi$ is a {constituent} of another character $\chi$ if $\chi-\psi$ is a character or zero; in this case, the largest integer $m$ such that $\chi-m\psi$ is a character or zero is the {multiplicity} of $\psi$ in $\chi$.
 %
%

 Given integers $1\leq i<j \leq n$, we let 
\[ \ba e_{ij} & = \text{the matrix in $\fkt_n(q)$ with 1 in position $(i,j)$ and zeros elsewhere,}\\
e_{ij}^* &=\text{the $\FF_q$-linear map $\fkt_n(q)\to \FF_q$ given by $e_{ij}^*(X) = X_{ij}$.}\ea\]
These matrices and maps are then dual bases of $\fkt_n(q)$ and its dual space $\fkt_n(q)^*$.


\subsection{Algebra groups}
\label{alg}

While we are mostly concerned with $\UT_n(q)$, it is helpful to present a few preliminary definitions in the greater generality of algebra groups.

Let $\fkn$ be a (finite-dimensional, associative) nilpotent $\FF_q$-algebra, and $\fkn^*$ its dual space of $\FF_q$-linear maps $\fkn \to \FF_q$.  Write $G = 1+\fkn$ to denote the corresponding \emph{algebra group}; this is the set of formal sums $1+X $ with $X \in \fkn$, made into a group via the multiplication \[(1+X)(1+Y) = 1+X+Y+XY.\]  The algebra $\fkt_n(q)$ of strictly upper triangular $n\times n$ matrices over $\FF_q$ and the {unitriangular group} $\UT_n(q) = 1 + \fkt_n(q)$ serve as prototypical examples of $\fkn$ and $G$.

We call a subgroup of $G=1+\fkn$ of the form $H = 1+\h$ where $\h \subset \fkn$ is a subalgebra an \emph{algebra subgroup}.  By theorems of Isaacs \cite{I} and Halasi \cite{H}, every irreducible representation of an algebra group over $\FF_q$ has $q$-power degree and is obtained by inducing a linear representation of an algebra subgroup.
If $\h \subset \fkn$ is a two-sided ideal then $H$ is a normal algebra subgroup of $G$, and the map $gH \mapsto 1+(X+\h)$ for  $g=1+X \in G$ gives an isomorphism $G/H \cong 1 + \fkn /\h$.  In practice we usually identify the quotient $G/H$ with the algebra group $1 + \fkn /\h$ by way of this canonical map.

\subsection{Kirillov functions and supercharacters}\label{supercharacters}

Fix a nontrivial homomorphism $\theta \colon \FF_q^+\to \CC^\times$ from the additive group of $\FF_q$ to the multiplicative group of nonzero complex numbers.  Observe that $\theta$ takes values in the cyclotomic field $\QQ(\zeta_p)$, where $p>0$ is the characteristic of $\FF_q$.
For each $\lambda \in \fkn^*$, we define $\theta_\lambda \colon G \to \QQ(\zeta_p)$ as the function 
\be\label{theta_lambda} \theta_\lambda(g) = \theta\circ \lambda(g-1),\qquad\text{for }g \in G.\ee
The maps $\theta\circ \lambda \colon \fkn \to \CC$ are the distinct irreducible characters of the abelian group $\fkn$, and from this it follows that the functions $\theta_\lambda \colon G\to \CC$ are an orthonormal basis (with respect to $\langle\cdot,\cdot\rangle_G$) for all functions $G\to \CC$.   

The most generic methods we have at our disposal for constructing characters of algebra groups involve summing the functions $\theta_\lambda$ over orbits in $\fkn^*$ under an appropriate action of $G$.
Kirillov functions provide perhaps the most natural example of such a construction.  Their definition relies on the \emph{coadjoint} action of $G$ on $\fkn^*$, by which we mean the left action $(g,\lambda) \mapsto g\lambda g^{-1}$ where we define
\[ 
(g \lambda g^{-1})(X) = \lambda(g^{-1} X g),\qquad\text{for } \lambda \in \fkn^*,\ g\in G,\ X \in \fkn.\]  Denote  the coadjoint orbit of $\lambda \in \fkn^*$ by $\lambda ^G$.    
The \emph{Kirillov function} $\psi_\lambda$ indexed by $\lambda \in \fkn^*$ is then the map  $G \to \QQ(\zeta_p)$ defined by 
\be\label{kirillov-def} \psi_\lambda = \frac{1}{\sqrt{|\lambda^G|}} \sum_{\mu \in \lambda^G} \theta_\mu.\ee The size of $\lambda^G$ is a power of $q$ to an even integer \cite[Lemma 4.4]{DI} and so $\psi_\lambda(1) = \sqrt{|\lambda^G|}$ is a nonnegative integer power of $q$. 
We have $\psi_\lambda = \psi_\mu$ if and only if $\mu \in \lambda^G$, and the distinct Kirillov functions on $G$ form an orthonormal basis (with respect to $\langle\cdot,\cdot\rangle_G$) for the class functions on the group.  Kirillov functions are sometimes but not always (irreducible) characters; for example,
$\Irr(\UT_n(q)) = \{ \psi_\lambda : \lambda \in \fkt_n(q)^*\}$ if and only if $n \leq 12$ \cite[Theorem 4.1]{supp0}.


While Kirillov functions provide an accessible orthonormal basis for the class functions of an algebra group,  supercharacters  alternatively provide an accessible family of orthogonal characters.  
Andr\'e \cite{Andre,Andre1} first defined these characters in the special case $G=\UT_n(q)$ as a practical substitute for the group's unknown irreducible characters.  Several years later, Yan \cite{Yan} showed how one could replace Andr\'e's definition with a more elementary construction, which Diaconis and Isaacs \cite{DI} subsequently generalized to algebra groups.

We define the supercharacters of $G=1+\fkn$ in a way analogous to Kirillov functions, but using left and right actions of $G$ on $\fkn^*$ in place of the coadjoint action.
In detail, the group $G$ acts on the left and right on $\fkn$  by multiplication, and on 
 $\fkn^*$ by $(g , \lambda) \mapsto g\lambda$ and $( \lambda,g) \mapsto \lambda g$ where we define
 \[ g\lambda(X) = \lambda(g^{-1}X)\qquad\text{and}\qquad \lambda g(X) = \lambda(Xg^{-1}),\qquad\text{for }\lambda \in \fkn^*,\ g \in G,\ X \in \fkn.\]
 These actions commute, in the sense that $(g\lambda) h = g(\lambda h)$ for $g,h \in G$, so there is no ambiguity in removing all parentheses and writing expressions like $g\lambda h$.   We denote the left, right, and two-sided orbits of $\lambda \in \fkn^*$ by  $G\lambda$, $\lambda G$, and $G\lambda G$ .
Notably, $G\lambda$ and $\lambda G$ have the same cardinality and $|G\lambda G| = \frac{|G\lambda||\lambda G|}{|G\lambda \cap \lambda G|}$ \cite[Lemmas 3.1 and 4.2]{DI}. %
The \emph{supercharacter} $\chi_\lambda$ indexed by $\lambda \in \fkn^*$ is the function  $G \to \QQ(\zeta_p)$ defined by 
\be\label{superchar-def} 
 \chi_\lambda = \frac{|G\lambda|}{|G\lambda G|} \sum_{\mu \in G \lambda G} \theta_\mu.
\ee 
Supercharacters are always characters but often reducible.  We have $\chi_\lambda = \chi_\mu$ if and only if $\mu \in G\lambda G$, and every irreducible character of $G$ appears as a constituent of a unique supercharacter.  The orthogonality of the functions $\theta_\mu$ implies that \[\langle \chi_\lambda,\chi_\mu\rangle_G = \begin{cases} |G\lambda \cap \lambda G|, &\text{if }\mu \in G \lambda G , \\
0,&\text{otherwise,}\end{cases}
\qquad\text{for }\lambda,\mu \in \fkn^*.\] 
If $\chi_\lambda$ is irreducible, then $\chi_\lambda = \psi_\lambda$ is a Kirillov function.  Furthermore, $\frac{|G\lambda|}{|G\lambda \cap \lambda G|} \chi_\lambda$ is the character of a two-sided ideal in $\CC G$, so all irreducible constituents of $\frac{|G\lambda|}{|G\lambda \cap \lambda G|} \chi_\lambda$ have multiplicity equal to their degree.  For proofs of these facts, see \cite{DI}.

\subsection{Constituents of supercharacters}\label{constits}

\def\Kir{\mathrm{Kir}}

  The supercharacter $\chi_\lambda$ is a positive integral linear combination of the Kirillov functions indexed by functionals in the two-sided orbit $G\lambda G$ \cite[Theorem 5.7]{AndreAdjoint}.  Let $\Kir(G,\chi_\lambda)$ denote the set of such constituent Kirillov functions:
\[ \Kir(G,\chi_\lambda) = \{ \psi_\mu : \mu \in G\lambda G \} = \{ \psi_\mu : \mu \in \fkn^*\text{ such that }\langle \chi_\lambda,\psi_\mu\rangle \neq 0\}.\]
Likewise, let $\Irr(G,\chi_\lambda)$ denote the set of irreducible characters which are constituents of $\chi_\lambda$.  By \cite[Theorem 2.1]{O}, $\Irr(G,\chi_\lambda)$ and $\Kir(G,\chi_\lambda)$ have the same cardinality, and one naturally asks when these two sets are equal.  Then following lemma is useful in answering this question.

\begin{lemma}\label{obs-las}
If $\alpha,\beta,\chi$ are supercharacters of an algebra group $G$ such that $\chi = \alpha \otimes \beta$ and $\langle\chi,\chi\rangle = \langle \alpha,\alpha\rangle \langle \beta,\beta\rangle$, then the maps
\[ \barr{ccc} \Irr(G,\alpha) \times \Irr(G,\beta) & \to & \Irr(G,\chi) \\ 
(\psi,\psi') &\mapsto & \psi \otimes \psi'\earr\qquad\text{and}\qquad\barr{ccc} \Kir(G,\alpha) \times \Kir(G,\beta) & \to & \Kir(G,\chi) \\ 
(\psi,\psi') &\mapsto & \psi \otimes \psi'\earr\] 
are both bijections.  Consequently, if in this setup $\Irr(G,\alpha) = \Kir(G,\alpha)$ and $\Irr(G,\beta) = \Kir(G,\beta)$, then $\Irr(G,\chi) = \Kir(G,\chi)$.
\end{lemma}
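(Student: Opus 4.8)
\textbf{}The plan is to extract a common structural feature shared by both canonical decompositions of a supercharacter and then run essentially the same counting argument in each setting. For any supercharacter $\chi_\lambda$, set $c_\lambda = \langle\chi_\lambda,\chi_\lambda\rangle/\chi_\lambda(1) = |G\lambda\cap\lambda G|/|G\lambda|$. The stated fact that $\frac{|G\lambda|}{|G\lambda\cap\lambda G|}\chi_\lambda$ has all its irreducible multiplicities equal to the corresponding degrees says exactly that the multiplicity of $\eta\in\Irr(G,\chi_\lambda)$ in $\chi_\lambda$ is $c_\lambda\,\eta(1)$; a parallel computation with the orthonormal functions $\theta_\mu$ (using that $\chi_\lambda$ is a positive combination of Kirillov functions \cite[Theorem 5.7]{AndreAdjoint}) gives $\langle\chi_\lambda,\psi_\mu\rangle = c_\lambda\,\psi_\mu(1)$ for each $\psi_\mu\in\Kir(G,\chi_\lambda)$. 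Thus each of $\alpha$, $\beta$, $\chi$ has the form $c\cdot\sum_\rho \rho(1)\,\rho$, where $\rho$ runs over $\Irr(G,\cdot)$ or over $\Kir(G,\cdot)$ and $c$ is the same in both. Since $\chi(1)=\alpha(1)\beta(1)$ and, by hypothesis, $\langle\chi,\chi\rangle=\langle\alpha,\alpha\rangle\langle\beta,\beta\rangle$, the constants multiply, $c_\chi=c_\alpha c_\beta$. Substituting the decompositions of $\alpha,\beta$ into $\chi=\alpha\otimes\beta$ and comparing with that of $\chi$ yields, in both the $\Irr$ and the $\Kir$ setting, the master identity
\[ \sum_{\psi,\psi'}\psi(1)\,\psi'(1)\,(\psi\otimes\psi') \;=\; \sum_{\zeta}\zeta(1)\,\zeta, \]
where $\psi,\psi'$ range over the relevant constituent sets of $\alpha,\beta$ and $\zeta$ over that of $\chi$.

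For the $\Irr$ statement I would expand $\langle\chi,\chi\rangle$ via the master identity, getting $\sum_\zeta\zeta(1)^2$ on one side and
\[ \sum_{\psi,\psi',\phi,\phi'}\psi(1)\psi'(1)\phi(1)\phi'(1)\,\langle\psi\otimes\psi',\,\phi\otimes\phi'\rangle \]
on the other. The diagonal terms $(\phi,\phi')=(\psi,\psi')$ are bounded below by $\sum_{\psi,\psi'}\psi(1)^2\psi'(1)^2 = \big(\sum_\psi\psi(1)^2\big)\big(\sum_{\psi'}\psi'(1)^2\big)$, which equals $\chi(1)^2/\langle\chi,\chi\rangle=\sum_\zeta\zeta(1)^2$ by the hypotheses; the off-diagonal terms are nonnegative because the inner product of two characters is a nonnegative integer. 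As the total equals the diagonal lower bound, every diagonal term is forced to its minimum and every off-diagonal term must vanish. The former gives $\langle\psi\otimes\psi',\psi\otimes\psi'\rangle=1$, so each $\psi\otimes\psi'$ is irreducible; the latter gives $\langle\psi\otimes\psi',\phi\otimes\phi'\rangle=0$ for distinct pairs, so these irreducibles are pairwise distinct. The master identity then exhibits $\sum_\zeta\zeta(1)\zeta$ as a sum of distinct irreducibles $\psi\otimes\psi'$ with coefficient $(\psi\otimes\psi')(1)$, so by uniqueness of the irreducible decomposition the set $\{\psi\otimes\psi'\}$ is exactly $\Irr(G,\chi)$, and the first map is a bijection.

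For the $\Kir$ statement a different final step is needed, since a norm-one nonnegative combination of Kirillov functions need not be a single Kirillov function. Here I would use that $\chi=\alpha\otimes\beta$ is the supercharacter $\chi_\nu$: because $\theta_\mu\otimes\theta_{\mu'}=\theta_{\mu+\mu'}$, comparing the $\theta$-expansions of $\alpha\otimes\beta$ and $\chi_\nu$ (and using $c_\nu=c_\alpha c_\beta$) shows that addition $(x,y)\mapsto x+y$ is a bijection $G\lambda G\times G\mu G\to G\nu G$. Hence $G\nu G$ is the disjoint union, over pairs of coadjoint orbits $x^G\subseteq G\lambda G$ and $y^G\subseteq G\mu G$, of the sets $x^G+y^G$, each of which is stable under the coadjoint action and so is a union of coadjoint orbits. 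The number of coadjoint orbits in $G\nu G$ is $|\Kir(G,\chi)|$, which by \cite[Theorem 2.1]{O} together with the already-proven $\Irr$ bijection equals $|\Irr(G,\chi)|=|\Irr(G,\alpha)|\,|\Irr(G,\beta)|=|\Kir(G,\alpha)|\,|\Kir(G,\beta)|$, the number of pairs. Since $G\nu G$ is partitioned into exactly this many nonempty unions of orbits, each set $x^G+y^G$ must be a single orbit $z^G$; the explicit formula $\psi_x\otimes\psi_y=\frac{1}{\sqrt{|x^G||y^G|}}\sum_{u\in x^G,v\in y^G}\theta_{u+v}$ then equals $\psi_z$, so the map is well defined into $\Kir(G,\chi)$, is injective by disjointness, and is a bijection by the count.

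The main obstacle is precisely this last point: unlike irreducible characters, Kirillov functions are not singled out among norm-one class functions by an integrality constraint, so the clean diagonal argument that disposes of $\Irr$ does not by itself show $\psi\otimes\psi'$ is a single Kirillov function. Bridging that gap is what forces the detour through the addition map and the orbit count, and it is the reason the $\Irr$ statement is proved first and then fed into the $\Kir$ count. The concluding implication is then immediate: if $\Irr(G,\alpha)=\Kir(G,\alpha)$ and $\Irr(G,\beta)=\Kir(G,\beta)$, the two bijections share the same domain and the same rule $(\psi,\psi')\mapsto\psi\otimes\psi'$, so their images agree, giving $\Irr(G,\chi)=\Kir(G,\chi)$.
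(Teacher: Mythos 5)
Your argument is correct, and it splits naturally into a half that matches the paper and a half that does not. For the $\Irr$ bijection you run essentially the paper's computation — expand $\langle\chi,\chi\rangle$ over the tensor products of constituents, observe that the diagonal terms already account for $\langle\alpha,\alpha\rangle\langle\beta,\beta\rangle$ while the off-diagonal terms are nonnegative, and let the hypothesis force $\langle \psi\otimes\psi',\phi\otimes\phi'\rangle=\delta_{\psi\phi}\delta_{\psi'\phi'}$ — except that you carry along the specific multiplicities $c\cdot\rho(1)$ coming from the two-sided ideal structure, which the paper does not need (arbitrary positive integer multiplicities suffice). For the $\Kir$ bijection your route is genuinely different. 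The paper quotes the fact that a product of Kirillov functions is a \emph{nonnegative integral} combination of Kirillov functions (\cite[Theorem 5.5]{AndreAdjoint} via restriction to the diagonal as in \cite[Theorem 6.6]{DI}); that integrality is exactly what rescues the "clean diagonal argument" you thought was unavailable, since a norm-one nonnegative integral combination of orthonormal functions is a single one of them, so the paper simply reruns the same inner-product computation. You instead show that $\theta_x\otimes\theta_y=\theta_{x+y}$ and $c_\nu=c_\alpha c_\beta$ force the addition map $G\lambda G\times G\mu G\to G\nu G$ to be a bijection, partition $G\nu G$ into the coadjoint-stable sets $x^G+y^G$, and use \cite[Theorem 2.1]{O} together with the already-established $\Irr$ count to conclude each such set is a single orbit. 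Your version is longer but avoids the external integrality theorem and yields slightly more: it identifies the coadjoint orbit of $\psi_x\otimes\psi_y$ explicitly as $x^G+y^G$; the paper's version is shorter and treats the two halves by one uniform argument.
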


\begin{proof}
Suppose $\alpha$ and $\beta$ decompose into positive integral linear combinations of distinct irreducible characters as
$ \alpha = a_1\phi_1 +\dots + a_r \phi_r$ and $\beta = b_1 \psi_1 + \dots + b_s \psi_s$ for positive integers $a_i,b_i$ and  $\phi_i,\psi_i \in \Irr(G)$.   Since inner products of characters are nonnegative integers, one computes 
\[ \barr{c} \langle \chi,\chi \rangle \geq \sum_{i,j} a_i^2 b_j^2 \langle \phi_i \otimes \psi_j, \phi_i \otimes \psi_j \rangle \geq \sum_{i,j} a_i^2 b_j^2 =
 \langle \alpha,\alpha\rangle \langle \beta,\beta \rangle. \earr\] 
  By hypothesis we have equality throughout, which implies that $\langle \phi_i \otimes \psi_j, \phi_{i'} \otimes \psi_{j'} \rangle =\delta_{ii'} \delta_{jj'}$ which is in turn equivalent to the first map being a bijection.
  
 Products of Kirillov functions decompose as nonnegative integral linear combinations of Kirillov functions: \cite[Theorem 5.5]{AndreAdjoint} asserts that this is true of restrictions of Kirillov functions to algebra subgroups, and the result for products follows by considering the restriction from $G\times G$ to its diagonal subgroup, as in the proof of \cite[Theorem 6.6]{DI}. Since, as noted above, a supercharacter is also a positive integral linear combination of its constituent Kirillov functions, the same argument shows that our second map is bijection.
 \end{proof}

Helpfully, 
the problem of enumerating the irreducible constituents of a supercharacter  reduces to that of counting the irreducible representations with a certain central character of a quotient of a typically much smaller algebra subgroup.  We shall find in Section \ref{crossing} that for $G = \UT_n(q)$, the structure of this quotient is closely related to combinatorial features of the set partition of $[n]$ giving the shape of the supercharacter under examination.
To describe this reduction precisely, for each $\lambda \in \fkn^*$,  define three subspaces $\fk k_\lambda,\fk l_\lambda, \fk s_\lambda \subset\fkn$ by 
\[ \ba 
\fk k_\lambda &= \{ X \in \fkn :\lambda(X)= \lambda(XY) =0\text{ for all }Y \in \fkn\}, \\
\fk l_\lambda &= \{ X \in \fkn : \lambda(XY) =0\text{ for all }Y \in \fkn\},\\
\fk s_\lambda &= \{ X \in \fkn : \lambda(XY) =0\text{ for all }Y\in \fk l_\lambda\}.
\ea\]  Alternatively, one constructs $\fk l_\lambda$ as the left kernel of the bilinear form 
$ B_\lambda \colon \fkn \times \fkn \to  \FF_q$ given by $(X,Y)\mapsto\lambda(XY)$; $\fk s_\lambda$ as the left kernel of the restriction of $B_\lambda$ to the domain $\fkn \times \fk l_\lambda$; and $\fk k_\lambda$ as the intersection $\fk l_\lambda \cap \ker \lambda$.  
The subspace $\fk s_\lambda$ is a subalgebra of $\fkn$; the subspace $\fk l_\lambda$ is a right ideal of $\fkn$ and a two-sided ideal of $\fk s_\lambda$; and the subspace $\fk k_\lambda$ is a two-sided ideal in $\fk s_\lambda$ (see Section 3.1 in \cite{supp0}).
We therefore may define $K_\lambda$, $L_\lambda$, $S_\lambda \subset G$ as the corresponding algebra subgroups $K_\lambda = 1+\fk k_\lambda$, 
$L_\lambda = 1+\fk l _\lambda$, $S_\lambda = 1+\fk s_\lambda$. Both $K_\lambda$ and $L_\lambda$ are the normal in  $S_\lambda$.  Our groups $L_\lambda$ and $S_\lambda$ are the same as the ones defined in Section 4.1 in \cite{AndreAdjoint}; see \cite[Lemma 3.1]{supp0} for a proof of this fact.

These algebra groups relate to the irreducible constituents of $\chi_\lambda$ in the following way.  Since $\fk k_\lambda$ and $\fk l_\lambda$ are two-sided ideals in $\fk s_\lambda$, we may identify $S_\lambda/ K_\lambda$ and $L_\lambda/K_\lambda$ with the algebra groups $1 + \fk s_\lambda / \fk k_\lambda$ and $1 + \fk l_\lambda / \fk k_\lambda$.  Let \[\pi \colon 1+X \mapsto 1 + (X+\fk k_\lambda)\] denote the quotient homomorphism $S_\lambda \to S_\lambda / K_\lambda$.
The following result combines Theorem 3.2 and Corollary 4.1 in \cite{supp0}.

\begin{theorem}\label{count} Let $\fkn$ be a finite-dimensional associative nilpotent $\FF_q$-algebra, write $G=1+\fkn$, and let $\lambda \in \fkn^*$. 
Then the number of irreducible constituents of degree $q^e$ of the supercharacter $\chi_\lambda$  is equal to the number of irreducible characters $\psi$ of $S_\lambda/K_\lambda$ such that 
\be\label{condition}
\psi\circ \pi(z) =\frac{\langle \chi_\lambda,\chi_\lambda\rangle}{\chi_\lambda(1)} \cdot q^e\cdot   \theta_\lambda(z),\qquad\text{for all }z \in L_\lambda.
\ee
Furthermore, every irreducible
constituent  with degree $q^e$ of the supercharacter $\chi_\lambda$ is a Kirillov function if and only if every irreducible character satisfying (\ref{condition}) of the algebra group $S_\lambda /K_\lambda \cong1 + \fk s_\lambda / \fk k_\lambda$  is a Kirillov function.
\end{theorem}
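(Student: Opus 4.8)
The plan is to prove both assertions through a single chain of correspondences
\[
\Irr(G,\chi_\lambda) \ \longleftrightarrow\ \Irr(S_\lambda \mid \theta_\lambda) \ \longleftrightarrow\ \{\psi \in \Irr(S_\lambda/K_\lambda) : \psi \text{ satisfies } (\ref{condition})\},
\]
where $\Irr(S_\lambda \mid \theta_\lambda)$ denotes the irreducible characters of $S_\lambda$ whose restriction to $L_\lambda$ contains $\theta_\lambda$, and then to check that these bijections carry Kirillov functions to Kirillov functions. The first move is to reinterpret (\ref{condition}). Evaluating it at $z=1$ shows that any qualifying $\psi$ has degree $\psi(1) = c\,q^e$ with $c = \frac{\langle\chi_\lambda,\chi_\lambda\rangle}{\chi_\lambda(1)}$, so that for general $z$ the condition reads $\psi\circ\pi(z) = \psi(1)\,\theta_\lambda(z)$.

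The right-hand correspondence is the clean one. A direct computation with the ideal relations (one checks $YX,\,XY \in \fk k_\lambda$ whenever $X \in \fk s_\lambda$ and $Y \in \fk l_\lambda$) shows that $L_\lambda/K_\lambda$ is a central subgroup of $S_\lambda/K_\lambda$; moreover $\theta_\lambda$ restricts to a linear character of $L_\lambda$ (because $\lambda(XY)=0$ for $X,Y\in\fk l_\lambda$) that is trivial on $K_\lambda$ (because $\lambda$ vanishes on $\fk k_\lambda$) and hence $S_\lambda$-invariant. Consequently any $\psi \in \Irr(S_\lambda \mid \theta_\lambda)$ restricts to a multiple of $\theta_\lambda$ on $L_\lambda$, is therefore trivial on $K_\lambda$, and factors through $S_\lambda/K_\lambda$; applying Schur's lemma to the central subgroup $L_\lambda/K_\lambda$ then gives $\psi\circ\pi(z) = \psi(1)\theta_\lambda(z)$ for $z\in L_\lambda$. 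Conversely any character of $S_\lambda/K_\lambda$ with this central behaviour lies over $\theta_\lambda$. Once the degree constraint $\psi(1)=c\,q^e$ is imposed, this is precisely (\ref{condition}), so the right-hand map is a degree-preserving bijection.

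The left-hand correspondence is the heart of the matter. Because $\fk l_\lambda$ is only a right ideal of $\fkn$, the subgroup $L_\lambda$ is in general not normal in $G$, so ordinary Clifford theory does not apply directly; instead one must show that $S_\lambda$ plays the role of the inertia subgroup. Following the orbit analysis of Diaconis--Isaacs \cite{DI}, the plan is to prove that the irreducible constituents of $\chi_\lambda$ are exactly the $\psi\in\Irr(G)$ lying over $\theta_\lambda$, that each arises by induction from a correspondent character in $\Irr(S_\lambda\mid\theta_\lambda)$ in the manner of the Clifford correspondence, and that this induction multiplies degrees by $|G:S_\lambda|$. The scalar $c$ is then pinned down by the facts recorded earlier, namely $\chi_\lambda(1) = |G\lambda|$, $\langle\chi_\lambda,\chi_\lambda\rangle = |G\lambda\cap\lambda G|$, and the observation that $\frac{|G\lambda|}{|G\lambda\cap\lambda G|}\chi_\lambda$ is the character of a two-sided ideal of $\CC G$, each of whose constituents occurs with multiplicity equal to its degree. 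Tracking degrees through the induction forces the scalar via the structural identity $|G:S_\lambda| = |G\lambda|/\langle\chi_\lambda,\chi_\lambda\rangle = c^{-1}$: a constituent of degree $q^e$ on $G$ corresponds to a character of degree $q^e/|G:S_\lambda| = c\,q^e$ on $S_\lambda/K_\lambda$, matching (\ref{condition}) exactly and proving the counting statement.

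Finally, for the Kirillov refinement I would track coadjoint orbits through this chain. The Kirillov constituents of $\chi_\lambda$ are the functions $\psi_\mu$ with $\mu\in G\lambda G$, indexed by the coadjoint $G$-orbits inside $G\lambda G$, and the plan is to show that restriction of functionals $\mu\mapsto \mu|_{\fk s_\lambda} + \fk k_\lambda \in (\fk s_\lambda/\fk k_\lambda)^*$ intertwines the coadjoint $G$-action on $G\lambda G$ with the coadjoint $(S_\lambda/K_\lambda)$-action, so that the correspondent of $\psi_\mu$ is exactly the Kirillov function $\psi_{\mu|_{\fk s_\lambda}+\fk k_\lambda}$ of $S_\lambda/K_\lambda$; the ``if and only if'' then follows because the entire chain is a bijection. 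I expect the main obstacle to be precisely the left-hand correspondence together with this orbit-compatibility: both rest on a careful analysis of how the left, right, and two-sided orbits in $\fkn^*$ restrict to $\fk s_\lambda^*$, and in particular on the structural identity $\langle\chi_\lambda,\chi_\lambda\rangle = |S_\lambda:L_\lambda|$ that makes the degree bookkeeping close up. This is where the substantive work lies, the remaining steps being formal.
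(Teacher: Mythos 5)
First, a point of orientation: the paper never proves Theorem \ref{count} itself---it imports it as ``Theorem 3.2 and Corollary 4.1 in \cite{supp0},'' which in turn rest on the induction correspondence of Andr\'e--Nicol\'as \cite{AndreAdjoint}. Your proposal is therefore really a reconstruction of the argument in those references, and its architecture matches theirs. The right-hand correspondence in your chain is done correctly and completely: the computation $XY,\,YX\in\fk k_\lambda$ for $X\in\fk s_\lambda$, $Y\in\fk l_\lambda$ is exactly what makes $L_\lambda/K_\lambda$ central in $S_\lambda/K_\lambda$ and makes $\theta_\lambda|_{L_\lambda}$ an $S_\lambda$-invariant linear character trivial on $K_\lambda$, and Schur's lemma then converts ``lying over $\theta_\lambda$'' into the central-character condition (\ref{condition}) once the degree $\psi(1)=\frac{\langle\chi_\lambda,\chi_\lambda\rangle}{\chi_\lambda(1)}q^e$ is read off at $z=1$. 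The arithmetic is also right: $\chi_\lambda(1)=|G\lambda|=|G:L_\lambda|$ and $\langle\chi_\lambda,\chi_\lambda\rangle=|G\lambda\cap\lambda G|=|S_\lambda:L_\lambda|$ combine to give $|G:S_\lambda|=\chi_\lambda(1)/\langle\chi_\lambda,\chi_\lambda\rangle$, the reciprocal of the scalar appearing in (\ref{condition}).

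The gap is that the left-hand correspondence---which is the entire substance of the theorem---is announced rather than proved. You correctly isolate the two claims needed: (i) the irreducible constituents of $\chi_\lambda$ are exactly the members of $\Irr(G)$ lying over $\theta_\lambda|_{L_\lambda}$ (equivalently $\chi_\lambda=\Ind_{L_\lambda}^{G}\theta_\lambda$, one of the basic facts in \cite{DI}), and (ii) induction from $S_\lambda$ carries $\Irr(S_\lambda\mid\theta_\lambda)$ bijectively onto this set while preserving irreducibility. Claim (ii) is a genuine theorem---it is precisely \cite[Theorem 4.3]{AndreAdjoint} and \cite[Theorem 3.2]{supp0}---and nothing in your write-up establishes it: ``following the orbit analysis of Diaconis--Isaacs'' and ``in the manner of the Clifford correspondence'' are placeholders for an argument that must in particular show the induced characters are irreducible even though $S_\lambda$ is not the inertia group of a character of a normal subgroup ($L_\lambda$ is only a right ideal's worth of subgroup, as you note). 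The Kirillov half has the same status: the compatibility of $\mu\mapsto\mu|_{\fk s_\lambda}+\fk k_\lambda$ with the coadjoint orbit structures, and the fact that it matches the induction bijection constituent by constituent, is the content of \cite[Corollary 4.1]{supp0} and is only sketched. In short: correct skeleton and correct reductions on the easy side, but the core correspondence is invoked in spirit rather than proved, so as it stands the proposal does not constitute a proof.
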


\subsection{Supercharacters of $\UT_n(q)$}

Fix a positive integer $n$ and a prime power $q>1$, and let $\fkt_n(q)^*$ denote the set of $\FF_q$-linear maps $\fkt_n(q) \to \FF_q$.  
Recall that a matrix is said to be \emph{monomial} if it has exactly one nonzero entry in each row and column.  Following \cite{Sangroniz}, we say that a matrix is \emph{quasi-monomial} if it has at most one nonzero entry in each row and column.  Given $\lambda \in \fkt_n(q)^*$ and integers $i,j$, let
\[ \lambda_{ij} =\begin{cases} \lambda(e_{ij}),&\text{if }1\leq i<j\leq n, \\  0,&\text{otherwise}.\end{cases}\]
We define $\lambda \in \fkt_n(q)^*$ to be quasi-monomial if the matrix $\sum_{i,j \in [n]} \lambda_{ij} e_{ij} \in \fkt_n(q)$ is quasi-monomial. 

\def\SC{\mathrm{SC}}

The following important fact is due originally to Andr\'e \cite{Andre,Andre1} and Yan \cite{Yan}: the quasi-monomial maps $\lambda \in \fkt_n(q)^*$  index the distinct supercharacters of $\UT_n(q)$; i.e., these elements represent the distinct two-sided $\UT_n(q)$-orbits in $\fkt_n(q)^*$ and  
\[\barr{ccc}   \Bigl\{{ \text{Quasi-mononomial maps } \lambda \in \fkt_n(q)^*}\Bigr \} &\to&\Bigl\{{\text{Supercharacters of $\UT_n(q)$}}\Bigr\} \\
\lambda&\mapsto& \chi_\lambda\earr\] is a bijection.
%
%
%
There is an especially simple product formula for the supercharacters of this group; see, for example, Section 2.3 in \cite{Thiem}.  For our purposes, only the following consequence of this formula will be needed:

\begin{lemma}\label{form} 



If $\lambda \in \fkt_n(q)^*$ is quasi-monomial and $\alpha,\beta \in \fkt_n(q)^*$ are such that $\lambda = \alpha+\beta$ and $\alpha_{ij} \beta_{ij} = 0$ for all $i,j$, then $\chi_\lambda = \chi_\alpha \otimes \chi_\beta$.

\end{lemma}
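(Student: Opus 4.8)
The plan is to deduce the lemma from the product formula for supercharacters of $\UT_n(q)$ recorded in \cite[Section 2.3]{Thiem}, after first checking that $\chi_\alpha$ and $\chi_\beta$ are themselves supercharacters so that the statement even makes sense. The hypothesis $\alpha_{ij}\beta_{ij}=0$ for all $i,j$ says exactly that the matrix supports $\supp(\alpha)=\{(i,j):\alpha_{ij}\neq 0\}$ and $\supp(\beta)$ are disjoint, while $\lambda=\alpha+\beta$ forces $\supp(\lambda)=\supp(\alpha)\sqcup\supp(\beta)$, with $\lambda_{ij}=\alpha_{ij}$ on $\supp(\alpha)$ and $\lambda_{ij}=\beta_{ij}$ on $\supp(\beta)$. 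Since $\lambda$ is quasi-monomial, its support has at most one position in each row and column, and hence so do the subsets $\supp(\alpha)$ and $\supp(\beta)$; therefore $\alpha$ and $\beta$ are quasi-monomial and $\chi_\alpha,\chi_\beta$ are honest supercharacters.

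Next I would recall the precise content of the product formula. For a quasi-monomial $\lambda$ it identifies $\chi_\lambda$ with the tensor (equivalently, pointwise) product of the elementary supercharacters $\chi_{\lambda_{ij}e_{ij}^*}$ attached to the individual arcs $(i,j)\in\supp(\lambda)$; the point is that the quasi-monomial condition---at most one arc per row and column---is exactly what guarantees that this product of elementary characters is again a single supercharacter. Granting this, the lemma becomes a bookkeeping step: factoring the tensor product over $\supp(\lambda)=\supp(\alpha)\sqcup\supp(\beta)$ and grouping the factors indexed by $\supp(\alpha)$ and by $\supp(\beta)$ gives
\[ \chi_\lambda = \Bigl(\bigotimes_{(i,j)\in\supp(\alpha)} \chi_{\alpha_{ij}e_{ij}^*}\Bigr)\otimes\Bigl(\bigotimes_{(i,j)\in\supp(\beta)}\chi_{\beta_{ij}e_{ij}^*}\Bigr) = \chi_\alpha\otimes\chi_\beta, \]
where the final equality applies the product formula in reverse to the quasi-monomial functionals $\alpha$ and $\beta$.

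The step I expect to carry the real weight is the invocation of the product formula, and in particular making clear that it is the quasi-monomiality of the \emph{sum} $\lambda$ (not merely of $\alpha$ and $\beta$ separately) that is being used, since for non-quasi-monomial configurations such a product of elementary characters acquires extra constituents. If one prefers a self-contained argument avoiding \cite{Thiem}, the alternative is to work directly from the definition \eqref{superchar-def}: using $\theta_\mu\cdot\theta_\nu=\theta_{\mu+\nu}$ one writes $\chi_\alpha\otimes\chi_\beta$ as a normalized sum of the $\theta_{\mu+\nu}$ over $\mu\in G\alpha G$ and $\nu\in G\beta G$, and then shows that the disjoint-support and quasi-monomial hypotheses force the set of all $\mu+\nu$ to be precisely the two-sided orbit $G\lambda G$, each element hit with the correct multiplicity, together with the degree identity $|G\lambda|=|G\alpha|\,|G\beta|$ coming from the additivity $\dim\fk l_\lambda=\dim\fk l_\alpha+\dim\fk l_\beta-\dim\fkt_n(q)$ of the relevant left kernels. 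This orbit computation is the genuine obstacle on the self-contained route, whereas citing the product formula reduces the entire lemma to the combinatorial bookkeeping displayed above.
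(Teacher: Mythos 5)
Your proposal is correct and follows essentially the same route as the paper: both verify that $\alpha$ and $\beta$ inherit quasi-monomiality from $\lambda$, invoke Thiem's product formula expressing $\chi_\lambda$ as the product of the elementary supercharacters attached to the arcs of $\lambda$, and then regroup the factors over the disjoint union of supports. The alternative self-contained argument you sketch at the end is not needed and is not what the paper does.
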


\begin{proof}
Given integers $1\leq i<j \leq n$ and $t \in \FF_q^\times$, let $\chi^{i\larc{t}j}$ be the supercharacter of $\UT_n(q)$ indexed by $te_{ij}^* \in \fkt_n(q)^*$.
Let $A(\mu)$ for $\mu \in \fkt_n(q)^*$ be the set of triples $(i,j,t) \in \ZZ\times \ZZ \times \FF_q^\times$ with
 $\mu_{ij} =t\neq 0$.   Thiem notes  just after \cite[Eq.\ (2.2)]{Thiem} that the product formula for $\chi_\lambda$ shows that $\chi_\lambda = \prod_{(i,j,t) \in A(\lambda)} \chi^{i\larc{t} j}$.
This identity is enough to conclude $\chi_\lambda = \chi_\alpha \otimes \chi_\beta$, since our
 hypotheses imply that $\alpha$ and $\beta$ are both quasi-monomial and that $A(\lambda)=A(\alpha)\cup A(\beta)$ is a disjoint union. 
 \end{proof}


\def\bar{\hspace{0.5mm}|\hspace{0.5mm}}
\def\st{\mathrm{st}}
\def\comp{\mathrm{Comp}}
\def\crcomp{\mathrm{CrComp}}
\def\Arc{\mathrm{Arc}}
\def\Cr{\mathrm{Cr}}
\def\cS{\mathcal{S}}


Each quasi-monomial $\lambda \in \fkt_n(q)^*$ naturally corresponds to a set partition of $[n]$, which we call its shape.  Recall that a {set partition} is just a set of pairwise disjoint, nonempty sets, and that the elements of a set partition are its \emph{parts}.  We write $\Lambda \vdash \cS$ to indicate that $\Lambda$ is a set partition, the union of whose parts is $\cS$.  
The number of set partitions of a set with $n$ elements is the Bell number $B_n$, which one can compute by the recurrence $B_{n+1} = \sum_{k=0}^{n} \binom{n}{k} B_k$ with $B_0=1$.

Formally, we define the \emph{shape} of a quasi-monomial $\lambda \in \fkt_n(q)^*$ as the finest set partition of $[n]$ in which $i,j$ belong to the same part whenever $\lambda_{ij} \neq 0$.  
Alternatively, the shape of $\lambda$ is the set partition whose parts are the vertex sets of the weakly connected components of the (weighted, directed) graph whose adjacency matrix is $\(\lambda_{ij}\)$.
For example, if $a_i\in \FF_q^\times$ then
\[ 
\ba
&a_1e_{1,2}^*  + a_2 e_{2,3}^*+\dots + a_5 e_{5,6}^*   \in \fkt_6(q)^*&& \quad\text{has shape}\qquad \{\{1,2,3,4,5,6\}\}\vdash[6], \\
& a_1e_{1,3}^* +a_2 e_{2,4}^*  +a_3e_{3,5}^*  \in \fkt_6(q)^*&& \quad\text{has shape}\qquad \{\{1,3,5\},\{2,4\}, \{6\}\}\vdash[6], \\
&0  \in \fkt_6(q)^*&& \quad\text{has shape}\qquad \{\{1\},\{2\},\{3\},\{4\},\{5\},\{6\}\}\vdash[6]
.
\ea\]  
%
%
%
The shape of a supercharacter  of $\UT_n(q)$ is by definition the shape of its unique quasi-monomial index $\lambda \in \fkt_n(q)^*$.
The map which associates to each supercharacter of $\UT_n(q)$ its shape defines a surjection
\[\label{thatmap} \barr{ccc} \Bigl\{ \text{Supercharacters of $\UT_n(q)$}\Bigr \} &\to
&
\Bigl\{{ \text{Set partitions of $[n]$} }\Bigr\}.\earr\]
This is a bijection if and only if $q=2$, and the inverse image of any  $\Lambda\vdash[n]$ has cardinality $(q-1)^{n-\ell(\Lambda)}$ where $\ell(\Lambda)$ is the number of parts of $\Lambda$.  
%
%


Define the functions  $N_\Lambda(q)$ and $N_{\Lambda,e}(q)$ as  in the introduction.
The total number $N_{n}(q)$ of irreducible characters of $\UT_n(q)$ and the total  number $N_{n,e}(q)$ of irreducible characters of degree $q^e$ are then given by
\be\label{imply} N_n(q) = \sum_{\Lambda \vdash[n]} (q-1)^{n-\ell(\Lambda)} N_\Lambda(q)\qquad\text{and}\qquad N_{n,e}(q) = \sum_{\Lambda\vdash[n]} (q-1)^{n-\ell(\Lambda)} N_{\Lambda,e}(q),\ee
where $\ell(\Lambda)$ is the number of parts of $\Lambda \vdash[n]$.  Thus, Higman's conjecture (that $N_n(q)$ is a polynomial in $q$) would follow if each $N_\Lambda(q)$ were a polynomial function in $q$, and similarly Lehrer's conjecture (that $N_{n,e}(q)$ is a polynomial in $q$) would hold if each $N_{\Lambda,e}(q)$ were a polynomial in $q$.  

 If $\lambda \in \fkt_n(q)^*$ is quasi-monomial with shape $\Lambda$ then $N_\Lambda(q)$ is the number of coadjoint orbits in the two-sided $\UT_n(q)$-orbit of $\lambda$ by \cite[Theorem 2.5]{O}.   In fact, it follows by \cite[Corollary 12]{Sangroniz} that if the characteristic of $\FF_q$ is sufficiently large, then $N_{\Lambda,e}(q)$ is the number of Kirillov functions $\psi$ with $\psi(1)= q^e$  and $\langle \psi,\chi_\lambda \rangle\neq 0$.  
Observations like this make it easy to believe that Conjecture \ref{ours} might fail if $n$ is sufficiently large and the characteristic of $\FF_q$ is sufficiently small.  We know from the results in \cite{supp1}, for example, that 
 there exist irreducible characters of $\UT_n(q)$ which are not Kirillov functions for large enough $n$, and nothing indicates that one should expect the numbers of Kirillov functions and irreducible characters of a certain degree to be equal.  Indeed, this does not hold for an algebra group in general: Jaikin-Zapirain constructs  in  \cite{JZ} an algebra group whose linear characters exceed in number its linear Kirillov functions.
Nevertheless, at present we have no data contradicting Conjecture \ref{ours} for $n\leq 13$.

\subsection{Notations for set partitions}

To describe methods of efficiently computing $N_{\Lambda,e}(q)$, it is useful to include a few more definitions pertaining to set partitions; for the most part we adopt our conventions from \cite{Stan} and \cite{Thiem}.  Throughout, $\cS$ denotes a finite subset of the natural numbers.

The \emph{standard representation} of a set partition $\Lambda\vdash \cS$ is the graph with vertex set $\cS$ which has an edge connecting $i,j \in \cS$ if  $j$ is the least integer greater than $i$ in the part of $\Lambda$ containing $i$. 
We denote by $\Arc(\Lambda)$ the set of pairs $(i,j) \in \cS^2$ with $i<j$ which are connected by an edge in the standard representation; we call this the \emph{arc set} of $\Lambda$. 
For example,
\be\label{std-ex} \Lambda= \{ \{ 1,3,4\} ,\{2,5\} \} \vdash[5]\quad\text{has standard representation}\quad \barr{c} \\ \xy<0.25cm,0.8cm> \xymatrix@R=-0.0cm@C=.5cm{
*{\bullet} \ar @/^1pc/ @{-} [rr]   & 
*{\bullet} \ar @/^1.2pc/ @{-} [rrr] &
*{\bullet} \ar @/^0.5pc/ @{-} [r]  &
*{\bullet} &
*{\bullet} \\
1   & 
2 &
3  &
4 &
5 
}\endxy
\earr\ee and $\Arc(\Lambda) = \{ (1,3), (2,5), (3,4)\}$.  
Observe that $\Arc(\Lambda)$ uniquely determines $\Lambda$  if the set $\cS$ which $\Lambda$ partitions is given.  Also, if $\lambda \in \fkt_n(q)^*$ is quasi-monomial with shape $\Lambda$, then $(i,j) \in \Arc(\Lambda)$ if and only if $\lambda_{ij} \neq 0$.  

We define a \emph{crossing} of $\Lambda\vdash\cS$ to be a 4-tuple $(i,j,k,l)\in \cS^4$ such that $i<j<k<l$ and $(i,k),(j,l)\in \Arc(\Lambda)$.
%
Intuitively, if one draws the standard representation of a set partition with all vertices collinear and all edges on the same side of the determined line, then each crossing corresponds to the intersection of two edges.  We denote by $\Cr(\Lambda)$ and $d(\Lambda)$ the following set and nonnegative integer:
\be\label{cr-d} \ba
&\barr{l} \Cr(\Lambda) = \{ (i,j) : (i,j,k,l) \text{ is a crossing of $\Lambda$ for some $k,l$}\},\earr\\
&\barr{l} d(\Lambda) = \sum_{(i,k) \in \Arc(\Lambda)} (k-i-1).\earr\ea\ee 
In the example (\ref{std-ex}), we have $\Cr(\Lambda) =\{(1,2)\}$ and $d(\Lambda) = 3$.   
%
%
The sets $\Arc(\Lambda)$ for $\Lambda \vdash[n]$ are the ``basic sets''  defined in \cite{AndreHecke}; in Andr\'e's notation, the set $\Cr(\Lambda)$ is precisely the set $\cS'$ \cite[Page 990]{AndreHecke} attached to the basic set $\Arc(\Lambda)$.

Of particular importance is the following standard fact \cite[Eqs.\ (2.2)-(2.3)]{Thiem}:
if $\chi$ is a supercharacter of $\UT_n(q)$ 
with shape $\Lambda$, then 
\be\label{fact} \langle \chi,\chi\rangle_{\UT_n(q)} = q^{|\Cr(\Lambda)|}\qquad\text{and}\qquad \chi(1)= q^{d(\Lambda)}.\ee
Thus the number of irreducible supercharacters of $\UT_n(q)$ is the number of {noncrossing set partitions} of $[n]$, which is well-known to be the Catalan number $C_n = \frac{1}{n+1} \binom{2n}{n}$.

Another noteworthy fact is the following result due to Andr\'e \cite{AndreHecke}.  Call a sequence $i_0 < i_1 < i_2 < \dots < i_k < i_{k+1} < i_{k+2}$ with every $(i_{r}, i_{r+2}) \in \Arc(\Lambda)$ a \emph{$k$-crossing} of $\Lambda\vdash\cS$; a crossing is then a 1-crossing.  A \emph{maximal crossing}  of $\Lambda$ with length $k$ is a $k$-crossing  which cannot be extended to a $(k+1)$-crossing.  The following is presented  as both \cite[Theorem 4]{AndreHecke} and  \cite[Theorem 7.3]{AndreAdjoint}.

\begin{theorem}\label{max-cross} Fix a positive integer $n$, a prime power $q>1$, and a set partition $\Lambda \vdash[n]$.  Then
$N_\Lambda(q)=1$ if and only if all maximal crossings of $\Lambda$ have even length, and in this case $N_{\Lambda,e}(q) =1$ for  $e = d(\Lambda) - \frac{1}{2} |\Cr(\Lambda)|$ and the unique irreducible constituent of any supercharacter with shape $\Lambda$ is a Kirillov function.
\end{theorem}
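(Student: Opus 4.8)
The plan is to separate the ``in this case'' assertions, which follow formally from the two invariants in (\ref{fact}), from the combinatorial core of the equivalence, which I would extract from the reduction in Theorem \ref{count}. Fix a quasi-monomial $\lambda$ of shape $\Lambda$, write $\chi=\chi_\lambda$, and recall $\langle\chi,\chi\rangle=q^{|\Cr(\Lambda)|}$ and $\chi(1)=q^{d(\Lambda)}$. If $\chi$ has a single irreducible constituent $\psi$, then $\chi=a\psi$ for a positive integer $a$, so $a^2=\langle\chi,\chi\rangle=q^{|\Cr(\Lambda)|}$ forces $a=q^{|\Cr(\Lambda)|/2}$ (whence $|\Cr(\Lambda)|$ is even) and $\psi(1)=\chi(1)/a=q^{\,d(\Lambda)-\frac12|\Cr(\Lambda)|}$; this yields $N_{\Lambda,e}(q)=1$ exactly for $e=d(\Lambda)-\frac12|\Cr(\Lambda)|$. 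For the Kirillov claim, $\Irr(G,\chi)$ and $\Kir(G,\chi)$ have equal size by \cite[Theorem 2.1]{O}, so both are singletons here; then $\chi$ is at once a positive multiple of the irreducible $\psi$ and of the unique constituent Kirillov function $\psi_\mu$, and since both have norm $1$ and positive value at the identity they coincide, so $\psi=\psi_\mu$ is a Kirillov function.

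For the equivalence itself I would apply Theorem \ref{count} to rewrite $N_\Lambda(q)=\sum_e N_{\Lambda,e}(q)$ as the total number of irreducible characters of $H:=S_\lambda/K_\lambda$ whose restriction to $Z:=L_\lambda/K_\lambda$ is the prescribed scalar multiple of the linear character descended from $\theta_\lambda$. A short check with the defining kernels shows $Z$ is central in $H$: for $X\in\fk s_\lambda$ and $Y\in\fk l_\lambda$ one has $XY,YX\in\fk k_\lambda$. The decisive structural input is an explicit basis of $V:=\fk s_\lambda/\fk l_\lambda$. Evaluating $B_\lambda(e_{ij},Y)=\lambda(e_{ij}Y)$ for the quasi-monomial $\lambda$ against $Y\in\fk l_\lambda$, one finds that $e_{ij}$ lies in $\fk s_\lambda$ and represents a nonzero class in $V$ precisely when $i$ and $j$ each emit an arc, say $(i,b_i)$ and $(j,b_j)$, with $i<j<b_i<b_j$; that is, $\{e_{ij}+\fk l_\lambda:(i,j)\in\Cr(\Lambda)\}$ is an $\FF_q$-basis of $V$, so $\dim_{\FF_q}V=|\Cr(\Lambda)|$.

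It remains to count the irreducibles of $H$ over the fixed central character, and here I would study the alternating $\FF_q$-valued form $\omega(\bar X,\bar Y)=\lambda(XY-YX)$ obtained by projecting the commutator onto the central direction $\lambda$ (well-defined on $V$ by the definitions of $\fk l_\lambda$ and $\fk s_\lambda$). Using $[e_{ij},e_{kl}]=[\,j=k\,]e_{il}-[\,l=i\,]e_{kj}$ with the basis above, one sees that $\omega$ pairs two crossings nontrivially exactly when they are adjacent crossings of a common $2$-crossing, so $\omega$ is a block sum of tridiagonal ``path'' forms, one block of size $k$ per maximal crossing of length $k$. Such a path form is nondegenerate iff $k$ is even, hence $\omega$ is nondegenerate iff every maximal crossing of $\Lambda$ has even length. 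I would then argue that the number of irreducibles of $H$ over the central character is $1$ precisely when $\omega$ is nondegenerate: when it is, one constructs a unique irreducible by choosing a polarizing subalgebra $\fk l_\lambda\subseteq\fk p\subseteq\fk s_\lambda$ isotropic for $\omega$ and adapted to the maximal-crossing chains, extending $\theta_\lambda$ over $P=1+\fk p$, and inducing; when it is degenerate, the nonzero radical produces at least two distinct compatible characters.

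The main obstacle is exactly this last step. A commutator $[e_{ij},e_{kl}]$ can land on a third crossing $e_{il}$ rather than in the center $\fk l_\lambda$ (this happens when $j=k$ but $l<b_i$, in which case $(i,l)\in\Cr(\Lambda)$ as well), so $H$ is in general of nilpotency class greater than $2$, the naive Stone--von Neumann count does not apply verbatim, and $\omega$ records only the leading bracket. The real work is to show the higher brackets do not affect the count. The cleanest route I would pursue is an inductive ``peeling'' argument that strips off one maximal crossing at a time and tracks how $N_\Lambda(q)$ multiplies, proving in fact that $N_\Lambda(q)=q^{r}$ where $r$ is the number of odd-length maximal crossings; the theorem is then the case $r=0$.
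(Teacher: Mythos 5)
First, a point of orientation: the paper does not prove Theorem \ref{max-cross} at all --- it is imported verbatim from Andr\'e, cited as \cite[Theorem 4]{AndreHecke} and \cite[Theorem 7.3]{AndreAdjoint} --- so there is no internal proof to compare against, and your attempt has to stand on its own. The parts of your argument that derive the ``in this case'' consequences from $N_\Lambda(q)=1$ are fine: $\chi=a\psi$ with $a^2=q^{|\Cr(\Lambda)|}$ forces $\psi(1)=q^{d(\Lambda)-\frac12|\Cr(\Lambda)|}$, and the matching of the singleton sets $\Irr(G,\chi)$ and $\Kir(G,\chi)$ via \cite[Theorem 2.1]{O} correctly yields the Kirillov statement. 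The identification of a basis of $\fk s_\lambda/\fk l_\lambda$ indexed by $\Cr(\Lambda)$ also agrees with what the paper records in Section \ref{crossing}.

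The equivalence itself, however, is not established, and the gap you flag is fatal rather than technical. You correctly observe that $H=S_\lambda/K_\lambda$ has nilpotency class greater than $2$ in general, so the form $\omega$ does not control the character count, and you defer the real work to a ``peeling'' argument whose target is the formula $N_\Lambda(q)=q^{r}$ with $r$ the number of odd-length maximal crossings. That formula is false. Take $\Lambda=\{\{1,4\},\{2,5\},\{3,6\}\}\vdash[6]$, one of the two exceptional partitions the paper analyzes after Proposition \ref{eval}: its three crossings $(1,2),(2,3),(1,3)$ arise from three maximal $1$-crossings (none extends, since no vertex is both the head and the tail of an arc), so $r=3$ and your formula predicts $q^{3}$; but $\Cr(\Lambda)$ is closed with $\UT_{6,\Cr(\Lambda)}(q)\cong \UT_3(q)$, so Corollary \ref{ref-cor} gives $N_\Lambda(q)=\#\Irr(\UT_3(q))=q^2+q-1$. (Note also that here $\omega\equiv 0$, so even the refined guess ``$q^{\dim\mathrm{rad}\,\omega}$'' fails; the non-abelian structure of $1+\fkcr_\Lambda(q)$ genuinely enters.) So the intended endgame cannot close the argument, and the equivalence ``$N_\Lambda(q)=1$ iff all maximal crossings have even length'' remains unproved in both directions --- your first paragraph only yields the much weaker necessary condition that $|\Cr(\Lambda)|$ be even. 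To complete a proof along these lines you would need Andr\'e's actual analysis (or the machinery of Theorem \ref{ref} applied to the specific structure of $\wt\fkcr_\Lambda(q)$), not a class-$2$ heuristic.
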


\section{Results}

In the following sections we establish items (a)-(d) in the introduction.
\subsection{Crossing algebras and character counts}\label{crossing}

Theorem \ref{count} shows that the numbers $N_{\Lambda,e}(q)$ count the irreducible representations with a certain central character of some quotient of algebra groups.  In this section we describe how this quotient 
corresponds to a natural algebra group structure on the crossing set $\Cr(\Lambda)$ of the set partition $\Lambda$.
  
This elementary construction is the following.
For each set partition $\Lambda$ and prime power $q>1$,  define the \emph{crossing algebra} $\fkcr_\Lambda(q)$ as the vector space $\fkcr_\Lambda(q) =\FF_q\spanning\{ e_{ij} : (i,j) \in \Cr(\Lambda)\} $ generated by the crossings of $\Lambda$, made into a nilpotent algebra via the multiplication 
\[ e_{ij} * e_{kl} = \begin{cases}  e_{il},&\text{if $j=k$ and $(i,l) \in \Cr(\Lambda)$,}\\ 
0,&\text{otherwise}, \end{cases}\qquad\text{for }(i,j),(k,l) \in \Cr(\Lambda).\]
Note that this product does not in general coincide with the usual matrix product $e_{ij} e_{kl} = \delta_{jk} e_{il}$. 
Likewise, we define $\wt \fkcr_\Lambda(q)$ to be the nilpotent $\FF_q$-algebra given as the central extension $\wt\fkcr_\Lambda(q) = \fkcr_\Lambda(q) \oplus \FF_q\spanning\{ z_\Lambda\}$ with multiplication
\be e_{ij} * z_\Lambda = z_\Lambda*e_{ij}=0\qquad\text{and}\qquad e_{ij} *e_{kl} =\begin{cases} e_{il},&\text{if $j=k$ and $(i,l) \in \Cr(\Lambda)$,}\\ 
z_\Lambda,&\text{if $j=k$ and $(i,l) \in \Arc(\Lambda)$,}\\
0,&\text{otherwise.}\end{cases}\ee

When $\Arc(\Lambda)  =\varnothing$ we have $\fkcr_\Lambda(q) = 0$ and $\wt\fkcr_\Lambda(q)\cong \FF_q$.  
To see that these algebras are always well-defined and associative, assume $\Arc(\Lambda) \neq \varnothing$  and let $\lambda$ be a nonzero multiple of $\sum_{(i,j) \in \Arc(\Lambda)} e_{ij}^* \in \fkt_n(q)^*$, so that $\lambda$ is quasi-monomial with shape $\Lambda$.  In the notation of Section \ref{constits}, we claim that 
\[ \fk C_\Lambda(q) \cong \fk s_\lambda / \fk l_\lambda\qquad\text{and}\qquad
\wt{\fk C}_\Lambda(q) \cong \fk s_\lambda / \fk k_\lambda.\]
  By definition $\fk k_\lambda$ is a codimension one subspace and hence an ideal of $\fk l_\lambda$; we noted in Section \ref{constits} that both $\fk k_\lambda$ and $\fk l_\lambda$ are  ideals in $\fk s_\lambda$; and  
Lemma 3.1 in \cite{supp1} asserts that  $\fk s_\lambda = \fk l_\lambda \oplus \FF_q\spanning\{ e_{ij} : (i,j) \in \Cr(\Lambda)\}$ as a vector space.  
These observations have the following consequences, which prove our claim:
\begin{enumerate}
\item[(1)] First, the cosets $e_{ij} + \fk l_\lambda$ for $(i,j) \in \Cr(\Lambda)$  form a basis for the quotient $\fk s_\lambda / \fk l_\lambda$, and it follows that the linear map defined by $e_{ij} + \fk l_\lambda \mapsto e_{ij} \in \fkcr_\Lambda(q)$ gives an algebra isomorphism $\fk s_\lambda /\fk l_\lambda \cong \fkcr_\Lambda(q)$.  

\item[(2)] Choose some $(k,\ell) \in \Arc(\Lambda)$ and let $z_\lambda = e_{k\ell} \in \fk l_\lambda$.   The coset $z_\lambda +\fk k_\lambda$ is independent of the choice of $(k,\ell)$
and
 spans the quotient $\fk l_\lambda / \fk k_\lambda$.  It follows that $z_\lambda + \fk k_\lambda$ and the cosets $e_{ij} + \fk k_\lambda$ for $(i,j) \in \Cr(\Lambda)$ provide a basis for $\fk s_\lambda / \fk k_\lambda$, and that the linear map defined by $z_\lambda + \fk k_\lambda \mapsto z_\Lambda \in \wt\fkcr_\Lambda(q)$ and $e_{ij} +\fk k_\lambda \mapsto e_{ij} \in \wt\fkcr_\Lambda(q)$ gives an algebra isomorphism $\fk s_\lambda /\fk k_\lambda \cong \wt\fkcr_\Lambda(q)$.

\end{enumerate}
On a technical note, the reader should observe that the isomorphism $\fk s_\lambda /\fk k_\lambda \cong \wt\fkcr_\Lambda(q)$ fails when $\Arc(\Lambda)= \varnothing$ and $\lambda = 0$, since then $\fk s_\lambda = \fk l_\lambda = \fk k_\lambda = \fkt_n(q)$ but $\wt\fkcr_\Lambda(q)\cong \FF_q$.

Applying Theorem \ref{count} to these constructions gives us a computable formula for $N_{\Lambda,e}(q)$.  Here, we write $\Irr(G;k)$ to denote the set of irreducible characters with degree $k$ of a group $G$.  

\begin{theorem}\label{ref} Fix a positive integer $n$, a prime power $q>1$, a nonnegative integer $e$, and a set partition $\Lambda \vdash[n]$. Then 
\[ N_{\Lambda,e}(q) =\frac{ \# \Irr\bigl(1+\wt\fkcr_\Lambda(q); q^f\bigr) -\#\Irr\bigl(1+\fkcr_\Lambda(q); q^f\bigr)}{q-1} ,\qquad\text{where } f = |\Cr(\Lambda)|-d(\Lambda)+e.\]
Furthermore, if all irreducible characters with degree $q^f$ of the algebra group $1+\wt\fkcr_\Lambda(q)$ are  Kirillov functions, then all irreducible constituents with degree $q^e$ of  supercharacters of $\UT_n(q)$ with shape $\Lambda$ are Kirillov functions.
\end{theorem}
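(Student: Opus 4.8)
The plan is to apply Theorem~\ref{count} to a quasi-monomial $\lambda \in \fkt_n(q)^*$ of shape $\Lambda$ and then translate the resulting condition into a count of irreducible characters of the two algebra groups $1+\wt\fkcr_\Lambda(q)$ and $1+\fkcr_\Lambda(q)$. We may assume $\Arc(\Lambda)\neq\varnothing$, since when $\Lambda$ is the partition into singletons both sides of the claimed identity are checked directly. Fix such a $\lambda$. By~(\ref{fact}) we have $\langle\chi_\lambda,\chi_\lambda\rangle = q^{|\Cr(\Lambda)|}$ and $\chi_\lambda(1)=q^{d(\Lambda)}$, so the scalar appearing in condition~(\ref{condition}) collapses to $\frac{\langle\chi_\lambda,\chi_\lambda\rangle}{\chi_\lambda(1)}q^e = q^{|\Cr(\Lambda)|-d(\Lambda)+e}=q^f$. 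Hence, by Theorem~\ref{count}, $N_{\Lambda,e}(q)$ equals the number of $\psi\in\Irr(S_\lambda/K_\lambda)$ with $\psi\circ\pi(z)=q^f\,\theta_\lambda(z)$ for all $z\in L_\lambda$.

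Next I would reinterpret this condition using the isomorphism $S_\lambda/K_\lambda\cong 1+\wt\fkcr_\Lambda(q)$ established above, writing $\wt G = 1+\wt\fkcr_\Lambda(q)$ and $G'=1+\fkcr_\Lambda(q)$. Under consequence~(2), $\pi(L_\lambda)=L_\lambda/K_\lambda$ is carried to the central subgroup $Z=1+\FF_q z_\Lambda$, and the coset $e_{k\ell}+\fk k_\lambda$ for the fixed $(k,\ell)\in\Arc(\Lambda)$ corresponds to $z_\Lambda$. Evaluating the condition at the identity forces $\psi(1)=q^f$, while for $z=1+s\,e_{k\ell}$ it reads $\psi(1+sz_\Lambda)=q^f\,\theta(s\lambda_{k\ell})$. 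Since $Z$ is central, this is precisely the requirement that $\psi\in\Irr(\wt G;q^f)$ have prescribed central character $\omega_\lambda\colon 1+sz_\Lambda\mapsto\theta(s\lambda_{k\ell})$ on $Z$, a nontrivial linear character of $Z\cong\FF_q^+$.

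It remains to count such $\psi$. Because $\FF_q z_\Lambda$ is an ideal with $\wt\fkcr_\Lambda(q)/\FF_q z_\Lambda\cong\fkcr_\Lambda(q)$, the irreducible characters of $\wt G$ of degree $q^f$ that are trivial on $Z$ are exactly the inflations of those of $\wt G/Z\cong G'$, contributing $\#\Irr(G';q^f)$ characters. For each of the $q-1$ nontrivial characters $\omega$ of $Z$ I claim the number $n_\omega$ of $\psi\in\Irr(\wt G;q^f)$ with central character $\omega$ on $Z$ equals $N_{\Lambda,e}(q)$: as $\lambda_{k\ell}$ ranges over $\FF_q^\times$ the characters $\omega_\lambda$ range over all nontrivial characters of $Z$, while each corresponding $\lambda$ remains quasi-monomial of shape $\Lambda$, so Theorem~\ref{count} identifies every $n_\omega$ with the count $N_{\Lambda,e}(q)$, which does not depend on the choice of $\lambda$ because the diagonal automorphisms act transitively on supercharacters of shape $\Lambda$. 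Summing over all of $\Irr(\wt G;q^f)$ then gives $\#\Irr(\wt G;q^f)=\#\Irr(G';q^f)+(q-1)N_{\Lambda,e}(q)$, which rearranges to the asserted formula.

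For the final clause I would observe that the characters satisfying~(\ref{condition}) form a subset of $\Irr(\wt G;q^f)$; so if every irreducible character of $1+\wt\fkcr_\Lambda(q)$ of degree $q^f$ is a Kirillov function, then in particular every character satisfying~(\ref{condition}) is, and the second part of Theorem~\ref{count} yields that all irreducible constituents of degree $q^e$ of $\chi_\lambda$---equivalently of every supercharacter of shape $\Lambda$---are Kirillov functions. The main obstacle is the equal-distribution step, namely that each nontrivial central character of $Z$ is realized by the same number of degree-$q^f$ irreducibles; the cleanest argument leverages the shape-invariance of $N_{\Lambda,e}(q)$ rather than an explicit automorphism of $\wt\fkcr_\Lambda(q)$, whose construction is awkward because the naive scaling $z_\Lambda\mapsto tz_\Lambda$ does not respect the arc products.
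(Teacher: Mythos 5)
Your proposal is correct and follows essentially the same route as the paper: apply Theorem \ref{count} to $\lambda = a\sum_{(i,j)\in\Arc(\Lambda)}e_{ij}^*$, use (\ref{fact}) to collapse the scalar in (\ref{condition}) to $q^f$, identify $S_\lambda/K_\lambda$ with $1+\wt\fkcr_\Lambda(q)$ so that the condition becomes a prescribed nontrivial central character on $1+\FF_q\spanning\{z_\Lambda\}$, and then let $a$ range over $\FF_q^\times$ to see that $(q-1)N_{\Lambda,e}(q)$ counts the degree-$q^f$ irreducibles with nontrivial central character, the remainder being inflations from $1+\fkcr_\Lambda(q)$. Your closing remark about leveraging the shape-invariance of $N_{\Lambda,e}(q)$ rather than an automorphism of $\wt\fkcr_\Lambda(q)$ is precisely the device the paper uses ("since $a\in\FF_q^\times$ was arbitrary\dots").
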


\begin{proof}
When $\Cr(\Lambda) = \varnothing$ we have $\fkcr_\Lambda(q) = 0$ and $\wt\fkcr_\Lambda(q)\cong \FF_q$ so the given formula holds trivially, and all supercharacters with shape $\Lambda$ are Kirillov functions.  Therefore assume $\Arc(\Lambda)$ and $\Cr(\Lambda)$ are nonempty.   
Choose  $a \in \FF_q^\times$ and let $\lambda = a\cdot \sum_{(i,j) \in \Arc(\Lambda)}e_{ij}^* \in \fkt_n(q)^*$ so that we may view $\fkcr_\Lambda(q) = \fk s_\lambda / \fk l_\lambda$ and $\wt\fkcr_\Lambda(q) = \fk s_\lambda / \fk k_\lambda$.  If we identify $S_\lambda /K_\lambda$ with $1+\wt\fkcr_\Lambda(q)$, then  the quotient map $\pi \colon S_\lambda \to S_\lambda / K_\lambda $  in Theorem \ref{count} may be defined by
\[ \pi (1+X) = 1 + \sum_{(i,j) \in \Cr(\Lambda)} X_{ij} e_{ij} +\tfrac{\lambda(X)}{a} z_\Lambda \in 1+\wt\fkcr_\Lambda(q),\qquad\text{for }X \in \fk s_\lambda.\]
%
Thus $\pi(1+X) = 1+\tfrac{\lambda(X)}{a} z_\Lambda$ for all $X \in \fk l_\lambda$.   Since $q^f = \frac{\langle \chi_\lambda,\chi_\lambda\rangle}{\chi_\lambda(1)}  q^e$ by (\ref{fact}),
  it follows by Theorem \ref{count} that 
 $N_{\Lambda,e}(q)$ is  the number of irreducible characters $\psi$   of $1+\wt \fkcr_\Lambda(q)$ for which $\psi(1+t z_\Lambda)=q^f \cdot\theta(at)$ for all $t \in \FF_q$. 
 
Every irreducible character $\psi$ of $1+\wt\fkcr_\Lambda(q)$, however, has $\psi(1+tz_\Lambda) = \psi(1)\cdot \theta(bt)$ for all $t \in \FF_q$ for some (possibly zero) $b \in \FF_q$.  This is clear from the fact that $1+\FF_q \spanning\{z_\Lambda\}$ is a central algebra subgroup of $1 + \wt\fkcr_\Lambda(q)$ isomorphic to the additive group of $\FF_q$.  Since $a\in \FF_q^\times$ was arbitrary in the preceding paragraph, it follows that $(q-1)\cdot N_{\Lambda,e}(q)$ is the number of irreducible characters of $1 + \wt\fkcr_\Lambda(q)$ whose kernels do not contain $1+\FF_q \spanning\{z_\Lambda\}$.  Thus $(q-1)\cdot N_{\Lambda,e}(q)- \# \Irr(1 + \wt\fkcr_\Lambda(q); q^f)$ is the number of irreducible characters of degree $q^f$ of the quotient of $1+\wt\fkcr_\Lambda(q)$ by $1 + \FF_q\spanning\{z_\Lambda\}$.  This quotient is precisely $1+\fkcr_\Lambda(q)$, which completes the proof of the first part of the theorem.  The second part is a slightly weaker special case of the last part of Theorem \ref{count}.
\end{proof}

\begin{remark}
Let $\fk N$ be a finite-dimensional associative nilpotent $\ZZ$-algebra.  Evseev describes in \cite{E} an algorithm which attempts to compute polynomials in $q$ giving the number of irreducible characters of degree $q^e$ of the algebra group attached to the nilpotent $\FF_q$-algebra $\fk N \otimes_\ZZ \FF_q$.    The crossing algebras $\fkcr_\Lambda(q)$ and $\wt\fkcr_\Lambda(q)$ are certainly of this form.  Thus,
  on a purely theoretical level, the preceding result combined with Evseev's work gives an algorithm for computing $N_{\Lambda,e}(q)$ as a function in $q$.  More practically, Evseev has  implemented his algorithm in the computer algebra system {\sc{Magma}} \cite{Magma}, and this implementation \cite{implement, implement2} succeeds in computing polynomials in $q$ giving $\# \Irr\(1+\fkcr_\Lambda(q); q^f\)$ and 
  $\# \Irr(1+\wt\fkcr_\Lambda(q))$  in a large number of cases.    In this way, the preceding theorem allows us to undertake some of the more substantial computations promised in the introduction.   
  
  Besides counting, we also intend to show that all irreducible characters of $\UT_n(q)$ with a certain degree are Kirillov functions.
  Evseev's methods translate this problem into a tractable calculation in the following way.  
 As in \cite{E},  define an irreducible character of an algebra group  to be \emph{well-induced} if it is induced from a linear character $\tau$ of an algebra subgroup $1+\fk h$ with $\ker \tau \supset 1+\fk h^2$.  It is almost immediate from \cite[Theorem 5.5]{AndreAdjoint} that any well-induced irreducible character of an algebra group is a Kirillov function; we stated this fact as Proposition 4.1 in \cite{supp0}.
  Now, the algorithm in \cite{E} enumerates only well-induced characters, and thus when 
  it is successful in computing generic $q$-polynomials which count the irreducible characters of the algebra groups $1 + \fk N \otimes_\ZZ \FF_q$, 
 it follows that  all irreducible characters of these groups are Kirillov functions.
\end{remark}

\begin{example} \label{ex13}   
Suppose $\Lambda \vdash[13]$ is the set partition
\[ \barr{c} \\ \\  \\[-8pt]
\xy<0.25cm,0.8cm> \xymatrix@R=-0.0cm@C=.5cm{
*{\bullet} \ar @/^1.6pc/ @{-} [rrrr] &
*{\bullet} \ar @/^1.6pc/ @{-} [rrrr] &
*{\bullet} \ar @/^2.4pc/ @{-} [rrrrrrr]  &
*{\bullet}  \ar @/^2.4pc/ @{-} [rrrrrrr]&
*{\bullet}  \ar @/^.8pc/ @{-} [rr]&
*{\bullet}  \ar @/^.8pc/ @{-} [rr] &
*{\bullet} \ar @/^.8pc/ @{-} [rr]&
*{\bullet} \ar @/^1.6pc/ @{-} [rrrr] &
*{\bullet} \ar @/^1.6pc/ @{-} [rrrr]  &
*{\bullet} &
*{\bullet} &
*{\bullet}&
*{\bullet} &
\\
1   & 
2 &
3  &
4 &
5 &
6 &
7 &
8 &
9 &
10 &
11 &
12 &
13 
}\endxy\\
\Lambda= \{ \{ 1,5,7,9,13\}, \{2,6,8,12\}, \{3,10\}, \{4,11\} \} \vdash[13]\\ \earr\]
$\UT_{13}(q)$ has $q(q-1)^{13}$ irreducible characters  which are not Kirillov functions by \cite[Theorem 1.4]{E} and \cite[Proposition 4.1]{supp0}, and  they all appear as constituents of supercharacters with  shape $\Lambda$ (see the remark following \cite[Proposition 3.2]{supp1}).  Hence, the original implementation of
 Evseev's algorithm should not be able to compute $N_{\Lambda,e}(q)$; however, the problems that arise in this special case are easily side-stepped.
 
In detail, Evseev's algorithm proceeds by recursively counting the characters of certain subgroups and quotients of the input, and it fails when the input is nontrivial yet cannot be reduced to an allowable subgroup or quotient.
For the crossing algebras $\fkcr_\Lambda(q)$ and $\wt\fkcr_\Lambda(q)$ with $\Lambda$ as above, this failure occurs when the algorithm is called recursively with an abelian algebra group as input.  The irreducible characters of such a group are easily counted even when they are not all well-induced (their number is the group's cardinality and their degrees are all one) and so after adding an appropriate if-then statement to Evseev's {\sc{Magma}} code \cite{implement}, as described in the comments in \cite{implement2}, we are able to compute via Theorem \ref{ref} that
\[N_{\Lambda,e}(q)= \begin{cases} 2(q-1)^4 + 7(q-1)^3 + 9(q-1)^2 + 5(q-1) + 1,&\text{if $e=15$},\\
3(q-1)^5 + 13(q-1)^4 + 22(q-1)^3 + 16(q-1)^2 + 4(q-1),&\text{if $e=16$}, \\
(q-1)^5 + 5(q-1)^4 + 7(q-1)^3 + 3(q-1)^2,&\text{if $e=17$}, \\
0,&\text{otherwise}.\end{cases}\]  Notably,  $N_{\Lambda,e}(q)$ is a polynomial in $q-1$ with nonnegative integer coefficients for all values of $e$.  
We mention that Example 3.1 in \cite{supp0} discusses how one can carry out a much less involved calculation to show that 
$N_{\Lambda}(2) = \sum_e N_{\Lambda,e}(2) = 98$, which is at least consistent with the more general formulas given here.

Of course, once we have modified Evseev's code in this way, it no longer holds that  if we can successfully compute $N_{\Lambda,e}(q)$ then  the irreducible constituents of a supercharacter with shape $\Lambda \vdash[n]$ are all well-induced.  Thus, it is important to stress that unless otherwise indicated, we do \emph{not} use this modified code in any of the subsequent calculations described in this work.

 \end{example}

The following corollary describes a common special case of Theorem \ref{ref}.  Say that a set $\cP$ of positions  above the diagonal in an $n\times n$ matrix is \emph{closed} if $(i,k) \in \cP$ whenever both $(i,j),(j,k) \in \cP$.  This is equivalent to the subspace 
\[ \fkt_{n,\cP}(q) \overset{\mathrm{def}} = \FF_q\spanning\{ e_{ij} : (i,j) \in \cP\} \subset \fkt_n(q)\] being a subalgebra.  We call a subalgebra of the form $\fkt_{n,\cP}(q)$ a \emph{pattern algebra} and the corresponding algebra group $\UT_{n,\cP}(q) \overset{\mathrm{def}} = 1 + \fkt_{n,\cP}(q)$ a \emph{pattern group}. 

\begin{corollary}\label{ref-cor} Retain the notation of Theorem \ref{ref}. 
 If for all $i,j,k,l,m \in [n]$ at most one of $(i,j,k,l)$ or $(j,k,l,m)$ is a crossing of $\Lambda$, then 
 \[N_{\Lambda,e}(q)=\# \Irr\(1+\fkcr_\Lambda(q);q^f\),\qquad\text{where }f =|\Cr(\Lambda)| - d(\Lambda)+e.\]  Furthermore, if this holds and all irreducible characters of the algebra group $1+\fkcr_\Lambda(q)$ are  Kirillov functions, then all irreducible constituents of  supercharacters of $\UT_n(q)$ with shape $\Lambda$ are Kirillov functions.
The given condition holds in particular when $\Cr(\Lambda)$ is closed, in which case $1+\fkcr_\Lambda(q)$ is isomorphic to the pattern group $\UT_{n,\Cr(\Lambda)}(q)$.

\end{corollary}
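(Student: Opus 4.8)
The plan is to show that the stated hypothesis forces the central element $z_\Lambda$ never to appear when multiplying basis vectors of $\wt\fkcr_\Lambda(q)$, so that $\FF_q\spanning\{z_\Lambda\}$ splits off as a direct summand; the formula then drops out of Theorem \ref{ref} after a one-line count. First I would record the combinatorial heart of the matter. By the defining multiplication, a product of basis vectors of $\wt\fkcr_\Lambda(q)$ equals $z_\Lambda$ exactly when there are $i<j<k$ with $(i,j),(j,k)\in\Cr(\Lambda)$ and $(i,k)\in\Arc(\Lambda)$, and I claim the hypothesis rules this out. Suppose such $i,j,k$ existed. Since $(i,j)\in\Cr(\Lambda)$ there are $k',l'$ with $(i,j,k',l')$ a crossing, so $(i,k'),(j,l')\in\Arc(\Lambda)$; since $(j,k)\in\Cr(\Lambda)$ there are $k'',l''$ with $(j,k,k'',l'')$ a crossing, so $(j,k''),(k,l'')\in\Arc(\Lambda)$. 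In the standard representation each vertex emits at most one arc to the right, so $(i,k)\in\Arc(\Lambda)$ and $(i,k')\in\Arc(\Lambda)$ force $k'=k$, and then $(j,l')\in\Arc(\Lambda)$ and $(j,k'')\in\Arc(\Lambda)$ force $k''=l'$. Setting $l=l'$ and $m=l''$, one has $i<j<k<l<m$ and checks from the arc memberships that both $(i,j,k,l)$ and $(j,k,l,m)$ are crossings, contradicting the hypothesis. This is the step I expect to be the main obstacle: arranging the two overlapping crossings and verifying via arc-uniqueness that all the indices coincide.

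Granting the claim, the multiplication table shows $\wt\fkcr_\Lambda(q)=\fkcr_\Lambda(q)\oplus\FF_q\spanning\{z_\Lambda\}$ is an algebra direct sum into two-sided ideals, the second being central of square zero, whence $1+\wt\fkcr_\Lambda(q)\cong(1+\fkcr_\Lambda(q))\times\FF_q^+$. Because the irreducibles of a direct product are exterior tensor products of irreducibles of the factors and all $q$ irreducibles of $\FF_q^+$ are linear, I would conclude $\#\Irr(1+\wt\fkcr_\Lambda(q);q^f)=q\cdot\#\Irr(1+\fkcr_\Lambda(q);q^f)$. Substituting this into the formula of Theorem \ref{ref} turns the numerator into $(q-1)\cdot\#\Irr(1+\fkcr_\Lambda(q);q^f)$, which proves the first assertion.

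For the Kirillov statement I would note that every irreducible character of $\FF_q^+$ is a Kirillov function, since its coadjoint orbits are singletons, and that coadjoint orbits in a direct product factor as products, so a product of Kirillov functions on the two factors is exactly the Kirillov function indexed by the corresponding pair. Thus if every irreducible of $1+\fkcr_\Lambda(q)$ is a Kirillov function, then so is every irreducible of $1+\wt\fkcr_\Lambda(q)$; applying the second part of Theorem \ref{ref} for every value of $f$ then gives that all irreducible constituents of supercharacters with shape $\Lambda$ are Kirillov functions.

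Finally I would treat the closed case. The key subsidiary fact is $\Cr(\Lambda)\cap\Arc(\Lambda)=\varnothing$: if $(i,k)$ lay in both, then $(i,k)\in\Cr(\Lambda)$ would produce an arc $(i,c)\in\Arc(\Lambda)$ with $c>k$, again violating uniqueness of the outgoing arc at $i$. Now if $\Cr(\Lambda)$ is closed and both $(i,j,k,l)$ and $(j,k,l,m)$ were crossings, then $(i,j),(j,k)\in\Cr(\Lambda)$, so $(i,k)\in\Cr(\Lambda)$ by closedness, while $(i,k)\in\Arc(\Lambda)$ from the first crossing $-$ contradicting the disjointness just shown; hence closedness implies the hypothesis. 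Moreover closedness is precisely the condition making $\fkt_{n,\Cr(\Lambda)}(q)$ a subalgebra, and under it the crossing product $e_{ij}*e_{jl}=e_{il}$ (defined when $(i,l)\in\Cr(\Lambda)$) coincides with the ambient matrix product, because $(i,j),(j,l)\in\Cr(\Lambda)$ already forces $(i,l)\in\Cr(\Lambda)$. Thus $\fkcr_\Lambda(q)=\fkt_{n,\Cr(\Lambda)}(q)$ as algebras and $1+\fkcr_\Lambda(q)\cong\UT_{n,\Cr(\Lambda)}(q)$.
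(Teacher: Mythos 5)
Your proposal is correct and follows essentially the same route as the paper: show the hypothesis forbids any product of basis crossings from hitting $z_\Lambda$, deduce that $1+\wt\fkcr_\Lambda(q)$ is the direct product of $1+\fkcr_\Lambda(q)$ with $1+\FF_q\spanning\{z_\Lambda\}\cong\FF_q^+$, divide the resulting character count by $q-1$ via Theorem \ref{ref}, and handle the Kirillov and closed-pattern statements exactly as the paper does. The only difference is that you spell out the arc-uniqueness argument for the key implication ($(i,j),(j,k)\in\Cr(\Lambda)$ and $(i,k)\in\Arc(\Lambda)$ would yield two overlapping crossings), which the paper asserts without detail; your verification of it is sound.
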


\begin{proof}
By construction $\Arc(\Lambda) \cap \Cr(\Lambda) = \varnothing$, and if our condition obtains, then $(i,j),(j,k) \in \Cr(\Lambda)$ implies $(i,k) \notin \Arc(\Lambda)$.  It follows in this case that $1+\wt\fkcr_\Lambda(q)$ is the internal direct product of $1+\FF_q\spanning\{z_\Lambda\}\cong \FF_q^+$ and a subgroup isomorphic to $1+\fkcr_\Lambda(q)$, so in particular $\#\Irr(1+\wt\fkcr_\Lambda(q);q^f) = q \cdot \#\Irr(1+\fkcr_\Lambda(q);q^f)$.  All irreducible characters of the abelian algebra group $1+\FF_q\spanning\{z_\Lambda\}$ are Kirillov functions, whence it follows that the same is true of all irreducible characters of $1+\wt\fkcr_\Lambda(q)$ if and only if every irreducible character of $1+\fkcr_\Lambda(q)$ is a Kirillov function.
The first half of the corollary now follows from the preceding theorem.  The last part is a consequence of the fact that $\fkcr_\Lambda(q)$ is indeed equal to the pattern algebra $\fkt_{n,\Cr(\Lambda)}(q)$ if $\Cr(\Lambda)$ is closed.
\end{proof}

In view of this corollary, it is worth noting that the natural analogue of Lehrer's conjecture fails for certain pattern groups.   Indeed, Halasi \cite{Hthesis} has recently shown (non-constructively)   that for some sufficiently large $n$ there exists a closed set of upper triangular positions $\cP$ such that
\begin{enumerate}
\item[(a)] $X^3 = 0$ for all $X \in \fkt_{n,\cP}(q)$; 
\item[(b)] The number of irreducible characters of the pattern group $\UT_{n,\cP}(q)$ is not a polynomial function in $q$, and in fact cannot be described by any finite set of polynomials in $q$  \cite[Theorem 4.9]{Hthesis}.
\end{enumerate}
\begin{remark}
In this situation, part (b) is true not only for the number of conjugacy classes / irreducible characters of $\UT_{n,\cP}(q)$, but also for the number of its superclasses / supercharacters, since (a) implies that $\cP$ has no 4-chains whence every supercharacter is irreducible by \cite[Proposition 5.1]{DT}.
\end{remark}
  Thus, if one could find $\Lambda \vdash[n]$ so that $\Cr(\Lambda)$ is an arbitrary closed set of positions, or at least a pattern $\cP$ for which (b) holds, then
the preceding corollary with Halasi's result would immediately disprove Conjecture \ref{ours}.

One cannot immediately apply this direct method of disproof, as the patterns which occur as $\Cr(\Lambda)$ for $\Lambda \vdash[n]$ are not arbitrary.  One can show, for example, that if $\Cr(\Lambda)$ is closed then $\UT_{n,\Cr(\Lambda)}(q)$ is never isomorphic to the commutator subgroup of $\UT_k(q)$ for $k\geq 5$.
In the next proposition we describe how to construct one obvious family of pattern groups whose conjugacy classes are counted by $N_\Lambda(q)$.  In general, however, the question of precisely which closed sets of positions may occur as $\Cr(\Lambda)$ for $\Lambda\vdash[n]$$-$and whether Halasi's methods can be adapted to disprove Conjecture \ref{ours}$-$remains open.  

\def\J{\cJ}

\begin{proposition} Fix a positive integer $n$ and let $\cJ = \{ (i,j) : 1\leq i <j \leq n \}$.  If $\cP\subset \cJ$ has the property that both $\cP$ and $\cJ\setminus \cP$ are closed, then there exists $\Lambda\vdash[2n]$ such that $\Cr(\Lambda) = \cP$.  
\end{proposition}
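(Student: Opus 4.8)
The plan is to realize $\cP$ using a very special set partition of $[2n]$: a perfect matching that pairs each vertex of the ``left half'' $\{1,\dots,n\}$ with a vertex of the ``right half'' $\{n+1,\dots,2n\}$. Concretely, for a permutation $\sigma$ of $[n]$ let $\Lambda_\sigma \vdash [2n]$ be the set partition whose two-element parts are $\{i,\,n+\sigma(i)\}$ for $i \in [n]$. Since each part satisfies $i \le n < n+\sigma(i)$, its unique arc is $(i,n+\sigma(i))$, so $\Arc(\Lambda_\sigma) = \{(i,n+\sigma(i)) : i \in [n]\}$. Two such arcs $(i,n+\sigma(i))$ and $(j,n+\sigma(j))$ with $i<j$ form a crossing precisely when $i<j<n+\sigma(i)<n+\sigma(j)$; here $j<n+\sigma(i)$ holds automatically because $j \le n$, so the condition reduces to $\sigma(i)<\sigma(j)$, and the crossing contributes the pair $(i,j)$ to $\Cr(\Lambda_\sigma)$. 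As every arc begins in the left half, every crossing pair lies in $\cJ$, and so
\[ \Cr(\Lambda_\sigma) = \{(i,j) : 1 \le i < j \le n,\ \sigma(i) < \sigma(j)\}, \]
the set of \emph{non-inversions} of $\sigma$. It therefore suffices to produce a permutation $\sigma$ of $[n]$ whose non-inversion set is exactly $\cP$, equivalently whose inversion set $\mathrm{Inv}(\sigma) = \{(i,j) : i<j,\ \sigma(i) > \sigma(j)\}$ equals $\cJ \setminus \cP$.

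At this point I would invoke the classical characterization of inversion sets: a subset $B \subseteq \cJ$ equals $\mathrm{Inv}(\sigma)$ for some permutation $\sigma$ of $[n]$ if and only if both $B$ and $\cJ \setminus B$ are closed, i.e.\ $(i,j),(j,k)\in B$ with $i<j<k$ forces $(i,k)\in B$, and likewise for $\cJ\setminus B$. Applying this with $B = \cJ \setminus \cP$, the hypotheses that $\cP$ and $\cJ \setminus \cP$ are both closed say exactly that $B$ is closed and $\cJ \setminus B = \cP$ is closed; hence a permutation $\sigma$ with $\mathrm{Inv}(\sigma) = \cJ \setminus \cP$ exists, and then $\Lambda = \Lambda_\sigma$ satisfies $\Cr(\Lambda) = \cP$ by the displayed computation. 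For completeness I would give the short proof of the characterization rather than merely cite it. The forward direction is immediate, since transitivity of $>$ (resp.\ $<$) among the values $\sigma(i)$ yields closure of $\mathrm{Inv}(\sigma)$ (resp.\ of its complement). For the backward direction I would define a relation $\lessdot$ on $[n]$ by declaring, for $a<b$, that $a \lessdot b$ when $(a,b)\notin B$ and $b \lessdot a$ when $(a,b)\in B$, and recover $\sigma$ as the order isomorphism $([n],\lessdot) \to ([n],<)$. The one substantive point is transitivity of $\lessdot$: a case analysis on the relative integer order of the three elements shows the case $a<b<c$ uses closure of $\cJ\setminus B$, the case $a>b>c$ uses closure of $B$, and the mixed cases combine the two closure conditions with the totality of $\lessdot$.

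The main obstacle is conceptual rather than computational: recognizing that the rigid-looking hypothesis ``$\cP$ and $\cJ\setminus\cP$ are both closed'' is precisely the condition for $\cJ\setminus\cP$ to be a permutation inversion set, and that the crossings of a half-to-half matching encode exactly the non-inversions of the associated permutation. Once this dictionary is established, the only step demanding genuine care is verifying transitivity of $\lessdot$ in the backward direction of the characterization, and that verification is entirely routine.
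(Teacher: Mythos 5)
Your proposal is correct and follows essentially the same route as the paper: the paper also uses the matching $\{i,n+\sigma(i)\}$ where $\sigma$ is the order isomorphism (``height function'') of the total order $\prec$ defined by $i\prec j$ iff $(i,j)\in\cP$ or $(j,i)\in\cJ\setminus\cP$, which is exactly your relation $\lessdot$. Your restatement via inversion sets of permutations and your explicit transitivity check merely make explicit what the paper asserts in one line.
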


\begin{proof}

 $\cP\subset \J$ satisfies our hypothesis if and only if the relation $\prec$ on $[n]$, given by setting  $i \prec j$ whenever $ (i,j) \in \cP$ or $(j,i) \in \J\setminus \cP$, is a total order.    
 Let  $h_\cP \colon [n] \to \NN$ be the height function of this total order, 
%
%
and let $\Lambda \vdash[2n]$ be the set partition with arc set $(j, n+h_\cP(j))$ for $j\in [n]$.  This is well-defined since $h_\cP \colon [n] \to [n]$ is a permutation, and one obtains
$\Cr(\Lambda) = \{ (i,j) : 1\leq i <j \leq n\text{ and } h_\cP(i) < h_\cP(j)\}$ by definition. This set is precisely $\cP$, since $h_\cP(i) < h_\cP(j)$ if and only if $i \prec j$, and when $i<j$ then this is equivalent to $(i,j) \in \cP$.
\end{proof}
%
%
%

\begin{example}
If $\Lambda= \{ \{1,n+1\}, \{2,n+2\},\dots, \{n,2n\}\} \vdash[2n]$ then  $N_{\Lambda,e}(q)$ is the number of irreducible characters of $\UT_n(q)$ of degree $q^{f}$ where $f = e- n(n-1)/2$.
\end{example}

We mention also that the supercharacters of the normal pattern subgroups $\UT_{n,\cP}(q) \vartriangleleft \UT_n(q)$ have been classified and possess a relatively explicit indexing set analogous to the set of quasi-monomial maps in $\fkt_n(q)^*$; see \cite{M2}.  
It may be possible to define a ``shape'' for these supercharacters, given  by some mild generalization of a set partition, which is similarly invariant under the action of an appropriate subgroup of $\Aut\( \UT_{n,\cP}(q)\)$.  This would presumably allow one to define and compute analogues of $N_{\Lambda,e}(q)$ for $\UT_{n,\cP}(q)$ using Evseev's algorithm with Theorem \ref{count}.  By extending the techniques described in the next sections, one might discover a version of Theorem \ref{intro} for, say, the commutator subgroups of $\UT_n(q)$ or some other family of normal pattern subgroups.

\subsection{Connectedness for set partitions}

\def\split{\mathrm{Split}}

While in principle we can use the results of the previous section and \cite{E} to compute $N_{\Lambda,e}(q)$ for all set partitions $\Lambda \vdash[n]$, this quickly grows to an enormous calculation.  The complexity of  this undertaking is significantly diminished by a useful factorization of 
$N_{\Lambda,e}(q)$, which we describe here.  The factors will correspond to the components of $\Lambda$ which are connected in a certain strong sense.  Leading up to our precise statement, we first describe  three increasingly restrictive notions of connectedness for set partitions.

The first notion is that of an atomic set partition, the definition of which we take from \cite{BZ}. Given two set partitions $\Gamma \vdash[m]$ and $\Lambda \vdash[n]$, define $\Gamma \bar \Lambda = \Gamma \cup(\Lambda+m) \vdash[m+n]$, where $\Lambda+m$ is the set partition of $\{m+1,m+2,\dots,m+n\}$ formed by adding $m$ to the entries in each part of $\Lambda$.  A set partition $\Lambda \vdash[n]$ is \emph{splittable} if there exist set partitions $A$, $B$ with 
 $\Lambda = A\bar B$, and \emph{atomic} otherwise. The \emph{split} of a set partition $\Lambda \vdash[n]$ is then the unique sequence  
 \[ \split(\Lambda)= \( \Lambda^{(1)}, \Lambda^{(2)}, \dots,\Lambda^{(d)}\)\] such that $\Lambda^{(i)}$ is an atomic set partition and $\Lambda = \Lambda^{(1)} \bar \Lambda^{(2)}\bar \cdots \bar \Lambda^{(d)}$.  
  Bergeron and Zabrocki show in \cite{BZ} that atomic set partitions index a free generating set of the Hopf algebra ${NCSym}$ of symmetric functions in noncommuting variables.  In fact, there is a natural way of identifying $NCSym$ with the space of superclass functions on $\UT_n(2)$, a fascinating connection explored  in  \cite{alia}.

Our second notion is that of a connected set partition.  
We say that a set partition $\Lambda\vdash[n]$ is \emph{disconnected} if the union of a subset of its parts is a proper, nonempty subinterval of $[n]$. 
Equivalently and more generally (as a consequence of \cite[Lemma 2.5]{Klazar}, for example),  a set partition $\Lambda\vdash\cS \subset \NN$ is \emph{disconnected} if and only if there exists a nonempty, proper subset $\Gamma \subset \Lambda$ such that 
\be\label{disconnected} \Cr(\Lambda) = \Cr(\Gamma) \cup \Cr(\Lambda\setminus\Gamma).\ee  
Note that this is well-defined as any subset of $\Lambda$ is a set partition of a subset of $\cS$.
  Naturally, $\Lambda$ is \emph{connected} if not disconnected.  If $\Gamma\subset\Lambda$ is nonempty and connected and equation (\ref{disconnected}) holds, then we say that $\Gamma$ is a \emph{connected component} of $\Lambda$.  A set partition $\Lambda$ then has a well-defined set of connected components, which we denote by $\comp(\Lambda)$.
  Bender, Odlyzko, and Richmond study the asymptotic number of connected set partitions in \cite{Bend1,Bend2}, where they are called irreducible. 
More recently,  Klazar describes a generating function and a recurrence for their enumeration in \cite{Klazar}.

  Our final notion is apparently the least standard.  We say that a set partition $\Lambda$ of a set $ \cS \subset \NN$ is \emph{crossing-connected} if $\Lambda$ is connected and $\Arc(\Lambda)$ has at most one equivalence class with respect to the equivalence relation $\sim$ generated by setting 
   \[ (i,k) \sim (j,l)\quad\text{whenever $(i,j,k,l)$ is a crossing of $\Lambda$}.\]
We note that $\Arc(\Lambda)$ has zero equivalence classes with respect to $\sim$ if and only if $\Arc(\Lambda)=\varnothing$, and in this case $\Lambda$ is crossing-connected if and only if $\Lambda$ partitions a set with one element.
  The \emph{crossing-connected components} of a set partition $\Lambda\vdash\cS$ are the crossing-connected set partitions $\Gamma$ such that either
  \begin{enumerate}
  \item[(1)] $\Gamma = \{ \{i\}\}$ where $\{i\}$ is a singleton part of $\Lambda$.
  \item[(2)] $\Arc(\Gamma)$ is an equivalence class of $\Arc(\Lambda)$ with respect to $\sim$.
  \end{enumerate}
  We denote the set of crossing-connected components of $\Lambda$ by $\crcomp(\Lambda)$.
  Unlike connected components, a crossing-connected component $\Gamma$ of $\Lambda$ may not have $\Gamma \subset \Lambda$; however, one always has $\Arc(\Lambda) = \bigcup_{\Gamma \in \crcomp(\Lambda)} \Arc(\Gamma)$ and $\Cr(\Lambda)=\bigcup_{\Gamma \in \crcomp(\Lambda)} \Cr(\Gamma)$ where the unions are disjoint, since two crossing arcs belong to same equivalence class.    

A connected set partition is atomic, and a crossing-connected set partition is connected.  Intuitively,  consider the standard representation of $\Lambda$ drawn in the plane with all vertices collinear and all edges on the same side of the determined line.  Then $\Lambda$ is connected if and only if one can travel between any two vertices by moving along arcs, where one can switch from one arc to another at a crossing or at a vertex.  In the same setup, $\Lambda$ is crossing-connected if and only if  the same feat is possible with the added condition that one can switch between arcs only at crossings.  For example, consider the following set partitions $A,B,C\vdash[5]$:
  \[
   \barr{c} \\ \\[-10pt] \xy<0.25cm,0.8cm> \xymatrix@R=-0.0cm@C=.5cm{
*{\bullet} \ar @/^1.5pc/ @{-} [rrrr]   & 
*{\bullet} \ar @/^0.6pc/ @{-} [r] &
*{\bullet}  \ar @/^0.6pc/ @{-} [r]   &
*{\bullet} &
*{\bullet} \\
1   & 
2 &
3  &
4 &
5 
}\endxy\\
A= \{ \{ 1,5\} ,\{2,3,4\} \} \\ \\[-10pt]
  \text{Atomic}
  \earr
\qquad
 \barr{c} \\ \xy<0.25cm,0.8cm> \xymatrix@R=-0.0cm@C=.5cm{
*{\bullet} \ar @/^1pc/ @{-} [rr]   & 
*{\bullet} \ar @/^1pc/ @{-} [rr] &
*{\bullet}  &
*{\bullet} \ar @/^0.5pc/ @{-} [r]&
*{\bullet} \\
1   & 
2 &
3  &
4 &
5 
}\endxy\\
B=  \{ \{ 1,3\} ,\{2,4,5\} \}\\ \\[-10pt]
  \text{Connected} \\
  \earr
  \qquad
 \barr{c} \\ \xy<0.25cm,0.8cm> \xymatrix@R=-0.0cm@C=.5cm{
*{\bullet} \ar @/^1pc/ @{-} [rr]   & 
*{\bullet} \ar @/^1pc/ @{-} [rr] &
*{\bullet} \ar @/^1pc/ @{-} [rr] &
*{\bullet}  &
*{\bullet} \\
1   & 
2 &
3  &
4 &
5 
}\endxy\\
C=  \{ \{ 1,3,5\} ,\{2,4\} \} \\ \\[-10pt]
  \text{Crossing-Connected} \\
  \earr
   \]
   The first set partition $A$ is atomic but not connected: its two connected components are $\{ \{1,5\} \}$, $\{ \{2,3,4\}\}$ and its three crossing-connected components are $\{ \{1,5\} \}$, $\{ \{2,3\}\}$, $\{\{3,4\}\}$. 
   Similarly, $B$ is connected but not crossing-connected: its two crossing-connected components are $\{ \{1,3\}, \{2,4\} \}$ and $\{ \{4,5\}\}$.  The third set partition $C$ is crossing-connected (in fact, $C$ is the only crossing-connected set partition of $[5]$), and therefore connected and atomic.

 We list the numbers of these various types of set partitions in Table \ref{tbl1}. 
Here we let $B_n$ denote the Bell numbers counting giving the number of  set partitions of $[n]$.  The modified numbers $B_n^{\text{type}}$ are self-explanatory; recurrence and asymptotic formulas for $B_n^{\text{crossing-connected}}$ are desired.

\begin{table}[h]
\[
 \barr{| c | l l l l |}
\hline 
n 
& B_n
  & B_n^{\mathrm{atomic}}
& B _n^{\mathrm{connected}}
& B_n^{\mathrm{crossing-connected}} 
\\
\hline
1 & 1 & 1 & 1& 1  \\
2 & 2 & 1 & 1& 1 \\
3 & 5 & 2 & 1& 0 \\
4 & 15 & 6 & 2& 1 \\
5 & 52 & 22 & 6& 1 \\
6 & 203 & 92 & 21& 5 \\
7 & 877 & 426 & 85& 16 \\
8 & 4,140 & 2,146 & 385& 69 \\
9 & 21,147 & 11,624 & 1,907 & 316 \\
10 & 115,975 & 67,146 & 10,205 & 1,591 \\
11 & 678,570 & 411,142 & 58,455 & 8,614 \\
12 & 4,213,597 & 2,656,052 & 355,884& 49,841 \\
13 & 27,644,437 & 18,035,178 & 2,290,536& 306,043 \\
14& 190,899,322 & 128,318,314 & 15,518,391&1,984,493 \\
15 & 1,382,958,545 & 954,086,192 & 110,283,179& 13,533,898
\\
\hline
\text{OEIS \cite{OEIS}:} & \text{\htmladdnormallink{A000110} {http://oeis.org/A000110}}
 &  \text{\htmladdnormallink{A074664} {http://oeis.org/A074664}}
 & \text{\htmladdnormallink{A099947} {http://oeis.org/A099947}}&\text{N/A}
\\\hline
\earr
\]
\caption{Counting atomic, connected, and crossing-connected set partitions of $[n]$}
\label{tbl1}
\end{table}

%
%

\subsection{Factorizations of $N_\Lambda(q)$ and $N_{\Lambda,e}(q)$}

The various components of $\Lambda\vdash[n]$ just defined are not necessarily partitions of sets of consecutive integers, and so to write down a decomposition of $N_{\Lambda,e}(q)$ we must explain what this notation means for an arbitrary set partition.  To this end, we observe that if $\Lambda \vdash\cS\subset \NN$ and $k=|\cS|$ then there is a unique ordering-preserving bijection $\cS\to  [k]$.  Following the convention of \cite{Groth}, we call the set partition of $[k]$ given by applying this bijection to the parts of $\Lambda$ the \emph{standardization} of $\Lambda$ and denote it $\st(\Lambda)$. For example, 
\[ \Lambda = \{ \{ 4,9\} ,\{6,14\}, \{10\}\}\qquad\text{has}\qquad \st(\Lambda) = \{ \{1,3\},\{2,5\},\{4\}\} \vdash[5].\] 
Observe that the crossing sets of $\Lambda$ and $\st(\Lambda)$ have the same cardinality but $d(\Lambda) \geq d(\st(\Lambda))$.
 
For a set partition $\Lambda\vdash\cS$ of an arbitrary finite subset $\cS\subset \NN$, we now define
\be\label{arb-def} 
\ba N_{\Lambda}(q) &\overset{\mathrm{def}}= N_{\st(\Lambda)}(q), \\
 N_{\Lambda,e}(q) &\overset{\mathrm{def}}= N_{\st(\Lambda), e-f_\Lambda}(q),\qquad\text{where }f_\Lambda= d(\Lambda)-d(\st(\Lambda)).\ea\ee   
We may now state this section's main theorem.  Here we recall that a weak composition of a nonnegative integer $k$ is a sequence of nonnegative integers whose sum is $k$.

\begin{theorem}\label{products}
For any positive integer $n$, prime power $q>1$, nonnegative integer $e$, and set partition $\Lambda \vdash[n]$, we have 
\[\displaystyle N_{\Lambda,e}(q) = \sum_{\textbf{w}} \prod_{\Gamma \in \crcomp(\Lambda)} N_{\Gamma, \textbf{w}_\Gamma}(q),\] where the sum is over all weak compositions $\textbf{w} = (\textbf{w}_\Gamma)$ of $e$ with $|\crcomp(\Lambda)|$ parts.
Furthermore
\[\displaystyle N_\Lambda(q) = \prod_{\Gamma \in \split(\Lambda)} N_\Gamma(q) = \prod_{\Gamma \in \comp(\Lambda)} N_\Gamma(q) = \prod_{\Gamma \in \crcomp(\Lambda)} N_\Gamma(q).\] 
\end{theorem}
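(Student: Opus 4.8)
The plan is to establish the weak-composition identity for $N_{\Lambda,e}(q)$ when $\Lambda \vdash [n]$ and then obtain all four product formulas by summing over $e$. Fix a quasi-monomial $\lambda \in \fkt_n(q)^*$ with shape $\Lambda$, and for each crossing-connected component $\Gamma \in \crcomp(\Lambda)$ put $\lambda_\Gamma = \sum_{(i,j)\in \Arc(\Gamma)} \lambda_{ij}\,e_{ij}^*$. Since $\Arc(\Lambda) = \bigsqcup_\Gamma \Arc(\Gamma)$ is disjoint, each position $(i,j)$ lies in at most one $\Arc(\Gamma)$; hence the $\lambda_\Gamma$ have pairwise disjoint supports, each is quasi-monomial, and $\lambda = \sum_\Gamma \lambda_\Gamma$. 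Writing $\wh\Gamma \vdash [n]$ for the shape of $\lambda_\Gamma$ (namely $\Gamma$ augmented by singleton parts filling out $[n]$), repeated application of Lemma \ref{form}, grouping the components, yields the factorization $\chi_\lambda = \bigotimes_\Gamma \chi_{\lambda_\Gamma}$.

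I would next check the hypothesis of Lemma \ref{obs-las}. By (\ref{fact}), $\langle \chi_\lambda,\chi_\lambda\rangle = q^{|\Cr(\Lambda)|}$ and $\langle \chi_{\lambda_\Gamma},\chi_{\lambda_\Gamma}\rangle = q^{|\Cr(\Gamma)|}$, and since $\Cr(\Lambda) = \bigsqcup_\Gamma \Cr(\Gamma)$ is disjoint, the norm identity $\langle \chi_\lambda,\chi_\lambda\rangle = \prod_\Gamma \langle \chi_{\lambda_\Gamma},\chi_{\lambda_\Gamma}\rangle$ holds exactly. This is precisely where crossing-connectedness, and not merely connectedness, enters: splitting $\Arc(\Lambda)$ any finer than its $\sim$-classes would break a crossing across two factors and make $|\Cr|$ strictly superadditive, so the norm identity (and hence the factorization of constituents) can hold no further. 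An induction on the number of components, applying Lemma \ref{obs-las} at each stage (its hypotheses being met because every partial sum of the $\lambda_\Gamma$ is again a quasi-monomial supercharacter index and the crossings of its shape decompose disjointly), then produces a bijection $\prod_\Gamma \Irr(\UT_n(q),\chi_{\lambda_\Gamma}) \to \Irr(\UT_n(q),\chi_\lambda)$ given by $(\psi_\Gamma)\mapsto \bigotimes_\Gamma \psi_\Gamma$.

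Reading degrees through this bijection is the next step: because $\bigl(\bigotimes_\Gamma \psi_\Gamma\bigr)(1) = \prod_\Gamma \psi_\Gamma(1)$, a tuple of constituents of degrees $q^{\textbf{w}_\Gamma}$ corresponds to a constituent of degree $q^{\sum_\Gamma \textbf{w}_\Gamma}$. Collecting by total degree gives $N_{\Lambda,e}(q) = \sum_{\textbf{w}} \prod_\Gamma N_{\wh\Gamma,\textbf{w}_\Gamma}(q)$, summed over weak compositions $\textbf{w}$ of $e$, where $N_{\wh\Gamma,\textbf{w}_\Gamma}(q)$ counts the degree-$q^{\textbf{w}_\Gamma}$ constituents of $\chi_{\lambda_\Gamma}$. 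The main obstacle is to replace $N_{\wh\Gamma,w}(q)$ by $N_{\Gamma,w}(q)$: since $\wh\Gamma \vdash [n]$ standardizes to itself rather than to $\st(\Gamma)$, this is not a formal consequence of (\ref{arb-def}). I expect to settle it using Theorem \ref{ref}. Deleting the singleton parts of $\wh\Gamma$ leaves $\Arc$, $\Cr$, and the crossing algebras $\fkcr$ and $\wt\fkcr$ literally unchanged, while standardizing fixes $|\Cr|$ and the isomorphism type of these algebras and lowers $d$ by exactly $f_\Gamma = d(\Gamma)-d(\st(\Gamma))$. Tracking the parameter $f = |\Cr| - d + (\cdot)$ in Theorem \ref{ref}, these two changes exactly compensate, giving $N_{\wh\Gamma,w}(q) = N_{\st(\Gamma),\,w-f_\Gamma}(q) = N_{\Gamma,w}(q)$ and hence the first formula.

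Finally I would deduce the product formulas. Summing the weak-composition identity over all $e$ lets each $\textbf{w}_\Gamma$ range freely, so $N_\Lambda(q) = \sum_e N_{\Lambda,e}(q) = \prod_\Gamma \sum_{w} N_{\Gamma,w}(q) = \prod_{\Gamma\in\crcomp(\Lambda)} N_\Gamma(q)$. The equalities with the products over $\split(\Lambda)$ and $\comp(\Lambda)$ follow because crossing-connected components refine both atomic and connected components: the bar construction places each atomic piece on a separate interval, and the disconnection relation (\ref{disconnected}) makes each connected component a union of $\sim$-classes, so that $\crcomp(\Lambda) = \bigsqcup_{\Gamma\in\split(\Lambda)} \crcomp(\Gamma) = \bigsqcup_{\Gamma\in\comp(\Lambda)}\crcomp(\Gamma)$. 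Applying the crossing-connected product formula to each coarser factor and multiplying recovers $\prod_{\Gamma\in\crcomp(\Lambda)}N_\Gamma(q)$ in all three cases, completing the proof.
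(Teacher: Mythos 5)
Your proposal is correct and follows essentially the same route as the paper: the paper packages your tensor-factorization argument (Lemma \ref{form} plus Lemma \ref{obs-las} applied via the disjointness of $\Arc$ and $\Cr$ over crossing-connected components) as Lemma \ref{lem1}, and your singleton-deletion/standardization step as Lemma \ref{lem2}, then applies both inductively exactly as you do. Your explicit justification that $\crcomp(\Lambda)$ refines both $\split(\Lambda)$ and $\comp(\Lambda)$ merely spells out what the paper leaves implicit when it sums over $e$.
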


Immediately, we have this corollary:

\begin{corollary} Conjecture \ref{ours} holds if and only if it holds for crossing-connected set partitions.  That is, $N_{\Lambda,e}(q)$ is a polynomial in $q$ with integer coefficients for all $\Lambda\vdash[n]$ and $e\in \ZZ$ if and only if the same is true of $N_{\Gamma,f}(q)$  for all crossing-connected set partitions $\Gamma \vdash[k]$ with $k\leq n$ and $f \in \ZZ$.
\end{corollary}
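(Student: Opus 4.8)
The plan is to read this off directly from the factorization in Theorem \ref{products}: that theorem does all the structural work, so what remains is bookkeeping about the closure of $\ZZ[q]$ under sums and products and about the standardization conventions in (\ref{arb-def}). I would treat the two implications separately, handling the quantifier ``$k\le n$'' explicitly so that the crossing-connected pieces appearing in the factorization are exactly the objects the hypothesis speaks about.

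For the forward implication, suppose $N_{\Lambda,e}(q)\in\ZZ[q]$ for every $\Lambda\vdash[n]$ and $e$. Given a crossing-connected $\Gamma\vdash[k]$ with $k\le n$, I would embed it into a partition of $[n]$ by adjoining the singleton parts $\{k+1\},\dots,\{n\}$, calling the result $\Lambda$. Each such singleton is a crossing-connected component of type (1) carrying no arcs, so $\crcomp(\Lambda)=\{\Gamma,\{\{k+1\}\},\dots,\{\{n\}\}\}$, and each singleton component contributes the single factor $N_{\{\{j\}\},0}(q)=1$ and zero in all other degrees. The sum over weak compositions in Theorem \ref{products} then collapses to the lone composition placing $e$ on $\Gamma$ and $0$ elsewhere, giving $N_{\Lambda,e}(q)=N_{\Gamma,e}(q)$. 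Since $\Lambda\vdash[n]$, the hypothesis transfers polynomiality with integer coefficients to $N_{\Gamma,e}(q)$, as required.

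For the reverse implication, fix $\Lambda\vdash[n]$ and $e$ and apply Theorem \ref{products} to write $N_{\Lambda,e}(q)=\sum_{\mathbf w}\prod_{\Gamma\in\crcomp(\Lambda)}N_{\Gamma,\mathbf w_\Gamma}(q)$, a finite sum over weak compositions of $e$ of finite products. Each factor is indexed by a crossing-connected component $\Gamma\vdash\cS$ with $|\cS|\le n$, and by (\ref{arb-def}) equals $N_{\st(\Gamma),\,\mathbf w_\Gamma-f_\Gamma}(q)$, where $\st(\Gamma)\vdash[k]$ with $k=|\cS|\le n$. Invoking the hypothesis on $\st(\Gamma)$ for the (possibly negative or out-of-range, hence possibly zero) degree $\mathbf w_\Gamma-f_\Gamma\in\ZZ$ shows each factor lies in $\ZZ[q]$; since finite sums and products of integer polynomials are again integer polynomials, so is $N_{\Lambda,e}(q)$.

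The argument has no genuine obstacle beyond two routine verifications, and I would flag the first as the one point deserving care: one must check that $\st(\Gamma)$ is again crossing-connected, so that the RHS hypothesis genuinely applies to the components produced by the factorization. This follows because standardization is an order-isomorphism and therefore preserves $\Arc$, $\Cr$, and the equivalence relation $\sim$ that defines crossing-connectedness. The second verification, that adjoining singletons in the forward direction leaves the crossing-connected component containing $\Gamma$ unchanged, is immediate since singletons contribute no arcs and hence no crossings. Everything else is the triviality that $\ZZ[q]$ is a ring.
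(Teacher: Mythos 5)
Your proposal is correct and follows the same route as the paper, which simply states the corollary as an immediate consequence of Theorem \ref{products}; you have merely filled in the bookkeeping (closure of $\ZZ[q]$ under finite sums and products, the collapse of the weak-composition sum when singletons are adjoined, and the fact that standardization preserves crossing-connectedness) that the paper leaves implicit.
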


Our proof of the theorem will follow from  two short lemmas, which we state below in rapid succession.  
The first of these is an immediate consequence of \cite[Lemma 3.4]{Le}; we provide a short proof using Lemmas   \ref{obs-las} and \ref{form} for completeness.

\begin{lemma}\label{lem1}
Suppose $\Lambda, A,B \vdash[n]$ are such that $\Arc(\Lambda)$ is the disjoint union of $\Arc(A)$ and $\Arc(B)$ and $\Cr(\Lambda)$ is the disjoint union of $\Cr(A)$ and $\Cr(B)$.  Then 
\[ N_{\Lambda,e}(q) = \sum_{a+b=e} N_{A,a}(q) \cdot N_{B,b}(q).\]
\end{lemma}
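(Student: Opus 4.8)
The plan is to exhibit supercharacters with shapes $A$, $B$, $\Lambda$ that are related by a tensor product to which Lemma \ref{obs-las} applies, and then to read off the stated convolution identity by tracking degrees through the resulting bijection. Concretely, I would set $\alpha = \sum_{(i,j) \in \Arc(A)} e_{ij}^*$ and $\beta = \sum_{(i,j) \in \Arc(B)} e_{ij}^*$ in $\fkt_n(q)^*$, and put $\lambda = \alpha + \beta$. Because the arc set of any set partition is the pattern of a quasi-monomial matrix, each of $\alpha$, $\beta$, $\lambda$ is quasi-monomial; and since a quasi-monomial map's shape is the unique set partition of $[n]$ with the corresponding arc set, the hypothesis $\Arc(\Lambda) = \Arc(A) \sqcup \Arc(B)$ forces the shapes of $\alpha$, $\beta$, $\lambda$ to be exactly $A$, $B$, $\Lambda$.

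With these choices, the two disjointness hypotheses feed two separate lemmas. Arc-disjointness gives $\alpha_{ij}\beta_{ij} = 0$ for all $i,j$, so Lemma \ref{form} applies and yields $\chi_\lambda = \chi_\alpha \otimes \chi_\beta$. Crossing-disjointness gives the multiplicativity of inner products required by Lemma \ref{obs-las}: by (\ref{fact}),
\[ \langle \chi_\lambda, \chi_\lambda \rangle = q^{|\Cr(\Lambda)|} = q^{|\Cr(A)|} q^{|\Cr(B)|} = \langle \chi_\alpha, \chi_\alpha \rangle \langle \chi_\beta, \chi_\beta \rangle. \]
Lemma \ref{obs-las} then supplies a bijection $\Irr(\UT_n(q), \chi_\alpha) \times \Irr(\UT_n(q), \chi_\beta) \to \Irr(\UT_n(q), \chi_\lambda)$ sending $(\psi, \psi')$ to $\psi \otimes \psi'$.

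The last step is to track degrees. Since $(\psi \otimes \psi')(1) = \psi(1)\,\psi'(1)$, a pair of constituents of degrees $q^a$ and $q^b$ is sent to a constituent of degree $q^{a+b}$, and conversely every degree-$q^e$ constituent of $\chi_\lambda$ arises uniquely from such a pair with $a + b = e$. Partitioning the bijection according to $e$ and recalling that $\chi_\lambda$, $\chi_\alpha$, $\chi_\beta$ have shapes $\Lambda$, $A$, $B$ (so that their degree-$q^a$ constituents are counted by $N_{\Lambda,a}(q)$, $N_{A,a}(q)$, $N_{B,a}(q)$) yields $N_{\Lambda,e}(q) = \sum_{a+b=e} N_{A,a}(q) N_{B,b}(q)$. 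I expect no genuine obstacle here; the one point demanding care is recognizing that the arc- and crossing-disjointness hypotheses are precisely what is needed to invoke Lemmas \ref{form} and \ref{obs-las} respectively, after which the degree bookkeeping is routine.
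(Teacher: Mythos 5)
Your proposal is correct and follows essentially the same route as the paper: choose quasi-monomial $\alpha,\beta$ with shapes $A,B$, use Lemma \ref{form} to get $\chi_\lambda = \chi_\alpha\otimes\chi_\beta$, use (\ref{fact}) with the crossing-disjointness to verify the inner-product hypothesis, and conclude via Lemma \ref{obs-las}. The only difference is that you spell out the degree bookkeeping that the paper leaves implicit.
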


\begin{proof}
Let $\alpha,\beta \in \fkt_n(q)^*$ be quasi-monomial with shapes $A,B$, respectively.  As $\Arc(A)$ and $\Arc(B)$ are disjoint and their union is $\Arc(\Lambda)$, it follows that $\lambda =\alpha+\beta$ is quasi-monomial with shape $\Lambda$ and $\chi_{\lambda} = \chi_\alpha \otimes \chi_\beta$ by Lemma \ref{form}.   Noting  (\ref{fact}), our claim follows by Lemma \ref{obs-las}.
\end{proof}

\begin{lemma}\label{lem2} Suppose $A \vdash[n]$ and let $\Gamma$ be the set partition formed by removing from $A$ all of its singleton parts.  Then $N_{A,e}(q) = N_{\Gamma,e}(q)$.
\end{lemma}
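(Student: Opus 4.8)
The plan is to reduce both sides to the crossing-algebra formula of Theorem \ref{ref} and to observe that singleton parts contribute nothing to any of the combinatorial data on which that formula depends. First I would record the elementary but crucial point that removing singleton parts leaves the arc set unchanged: a singleton part $\{i\}$ of $A$ has no element of its part larger than $i$, so it produces no edge in the standard representation. Hence $\Arc(\Gamma) = \Arc(A)$ as sets of pairs of integers. Since the crossing set $\Cr$ and the statistic $d$ are defined purely in terms of the arc set, it follows at once that $\Cr(\Gamma) = \Cr(A)$, that $|\Cr(\Gamma)| = |\Cr(A)|$, and that $d(\Gamma) = d(A)$; moreover the crossing algebras themselves coincide, $\fkcr_\Gamma(q) = \fkcr_A(q)$ and $\wt\fkcr_\Gamma(q) = \wt\fkcr_A(q)$, because their bases and structure constants are determined by $\Arc$ and $\Cr$ alone.

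The only remaining wrinkle is that $\Gamma$ partitions a subset $\cS \subset [n]$ rather than an initial segment, so $N_{\Gamma,e}(q)$ is defined through standardization by (\ref{arb-def}) as $N_{\st(\Gamma),\, e - f_\Gamma}(q)$ with $f_\Gamma = d(\Gamma) - d(\st(\Gamma))$. I would handle this by noting that the order-preserving bijection $\phi \colon \cS \to [k]$ underlying $\st$ carries crossings to crossings and arcs to arcs, and therefore induces $\FF_q$-algebra isomorphisms $\fkcr_{\st(\Gamma)}(q) \cong \fkcr_\Gamma(q)$ and $\wt\fkcr_{\st(\Gamma)}(q) \cong \wt\fkcr_\Gamma(q)$ via $e_{ij} \mapsto e_{\phi(i)\phi(j)}$ and $z_{\st(\Gamma)} \mapsto z_\Gamma$. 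In particular $|\Cr(\st(\Gamma))| = |\Cr(\Gamma)| = |\Cr(A)|$, and combining this with the first paragraph shows that the algebra groups $1 + \fkcr_{\st(\Gamma)}(q)$ and $1 + \fkcr_A(q)$ are isomorphic, as are $1 + \wt\fkcr_{\st(\Gamma)}(q)$ and $1 + \wt\fkcr_A(q)$.

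With this in hand the proof becomes a bookkeeping check that the degree parameter appearing in Theorem \ref{ref} agrees on both sides. Applying Theorem \ref{ref} to $A \vdash [n]$ (for which $\st(A) = A$ and $f_A = 0$, so the formula applies directly) expresses $N_{A,e}(q)$ via $\#\Irr$ of $1 + \wt\fkcr_A(q)$ and $1 + \fkcr_A(q)$ at degree $q^f$ with $f = |\Cr(A)| - d(A) + e$. Applying Theorem \ref{ref} to $\st(\Gamma) \vdash [k]$ expresses $N_{\st(\Gamma),\, e - f_\Gamma}(q)$ via the isomorphic algebra groups at degree $q^g$ with $g = |\Cr(\st(\Gamma))| - d(\st(\Gamma)) + (e - f_\Gamma)$. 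Substituting $d(\st(\Gamma)) = d(\Gamma) - f_\Gamma$, $|\Cr(\st(\Gamma))| = |\Cr(A)|$, and $d(\Gamma) = d(A)$, the two $f_\Gamma$-terms cancel and $g = |\Cr(A)| - d(A) + e = f$. Since the relevant algebra groups are isomorphic and the degrees agree, the two $\#\Irr$ differences coincide, and dividing by $q - 1$ gives $N_{\Gamma,e}(q) = N_{\st(\Gamma),\, e - f_\Gamma}(q) = N_{A,e}(q)$.

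The step I expect to require the most care is the combinatorial bookkeeping in the last paragraph: correctly tracking the shift $f_\Gamma$ introduced by standardization and confirming that it exactly cancels the discrepancy between $d(\st(\Gamma))$ and $d(A)$, rather than any deep structural input. I would also dispatch the degenerate case in which $A$ consists entirely of singletons, so that $\Gamma$ partitions the empty set; here both sides are trivial (equal to $1$ when $e = 0$ and $0$ otherwise) and the identity is immediate.
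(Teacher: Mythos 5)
Your proposal is correct and follows essentially the same route as the paper's proof: both reduce to Theorem \ref{ref}, use the fact that singleton parts do not alter $\Arc$, $\Cr$, or $d$, identify the crossing algebras of $A$ and $\st(\Gamma)$ via the order-preserving reindexing map, and check that the standardization shift $f_\Gamma$ cancels so the degree parameter $f$ agrees on both sides. The paper's version is just more terse; your added care with the degenerate all-singleton case is harmless but not needed.
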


Note that in this statement $\Gamma$ is not necessarily a set partition of $[n]$, and so the normalization in (\ref{arb-def}) becomes important.

\def\Y{\mathcal{Y}}

\begin{proof}
Suppose $\Gamma \vdash \cS = \{ i_1< i_2 < \dots < i_k\}$.  Let $B\vdash[k]$ denote the standardization of $\Gamma$, and observe that the reindexing map $j \mapsto i_j$ induces obvious isomorphisms $\fkcr_A(q) \cong \fkcr_B(q)$ and $\wt\fkcr_A(q)\cong \wt\fkcr_B(q)$.  By definition  $N_{\Gamma,e}(q) = N_{B,e'}$ where $e'=e-d(\Gamma)+d(B)$, and $|\Cr(A)| = |\Cr(B)|$ and $d(A) = d(\Gamma)$.  Thus $|\Cr(A)|- d(A) +e = |\Cr(B)| - d(B) +e'$, and so the claim $N_{\Gamma,e}(q)=N_{A,e}(q)$ is apparent from Theorem \ref{ref}.  
\end{proof}

\begin{proof}[Proof of Theorem \ref{products}]
Let $\Lambda \vdash[n]$, and for each $\Gamma \in \crcomp(\Lambda)$ form $\wt\Gamma \vdash[n]$ by adding to $\Gamma$ a sequence of singleton parts, so that $\Arc(\Gamma) = \Arc(\wt\Gamma)$. It follows by inductively applying Lemma \ref{lem1} that 
$N_{\Lambda,e}(q) = \sum_{\textbf{w}} \prod_{\Gamma \in \crcomp(\Lambda)} N_{\wt\Gamma, \textbf{w}_\Gamma}(q)$.  By the preceding lemma  $N_{\wt\Gamma,e}(q) = N_{\Gamma,e}(q)$, which brings our equation into the desired form.  The formulas for $N_\Lambda(q)$ now follow by summing the formula for $N_{\Lambda,e}(q)$ over all $e \geq 0$.  
\end{proof}

\begin{example} If $\Lambda \vdash[15]$ is the set partition
\[ \barr{c} \\ \\  \\[-8pt]
\xy<0.25cm,0.8cm> \xymatrix@R=-0.0cm@C=.5cm{
*{\bullet} \ar @/^.9pc/ @{-} [rrr] &
*{\bullet} \ar @/^2.5pc/ @{-} [rrrrrrrrrrrr] &
*{\bullet} &
*{\bullet}  \ar @/^.9pc/ @{-} [rr]&
*{\bullet}  \ar @/^.9pc/ @{-} [rr]&
*{\bullet} &
*{\bullet} \ar @/^.9pc/ @{-} [rrr]&
*{\bullet} \ar @/^.9pc/ @{-} [rrr] &
*{\bullet}  &
*{\bullet} &
*{\bullet} \ar @/^.6pc/ @{-} [rr]&
*{\bullet}&
*{\bullet} &
*{\bullet} \ar @/^.5pc/ @{-} [r]&
*{\bullet}
\\
1   & 
2 &
3  &
4 &
5 &
6 &
7 &
8 &
9 &
10 &
11 &
12 &
13 &
14 &
15 &
}\endxy\\
\Lambda=  \{ \{1,4,6\}, \{2,14,15\}, \{3\}, \{5,7,10\},  \{8,11,13\}, \{9\}, \{12\} \} \vdash[15]\\ \earr\]
then $\Lambda$ has eight crossing-connected components and $N_\Lambda(q) =  N_A(q)^3\cdot N_B(q)^2 \cdot N_C(q)^3$ where $A = \{ \{ 1,3\},\{2,4\} \} \vdash[4]$ and  $B=\{ \{1,2\}\} \vdash[2]$ and $C =\{ \{1\}\} \vdash[1]$.  Using Corollary \ref{ref-cor}, one finds $N_A(q) = N_{A,1}(q)= q$ and $N_B(q) = N_{B,0}(q)= N_C(q) = N_{C,0}(q)= 1$, which allows us to compute $N_\Lambda(q) = N_{\Lambda,17}(q) = q^3$.
\end{example}
  

We see from Table 1 that Theorem \ref{products} reduces the amount of work required to compute $N_{\Lambda,e}(q)$ for all $\Lambda \vdash[n]$ quite significantly: the fraction of set partitions of $[15]$ which are crossing-connected is a little less than $1/100$.
 Moreover, we can verify Conjecture \ref{ours} for small values of $n$  by inspection.  Aiding us in this is the following  proposition.

\begin{proposition}\label{eval}
Fix a positive integer $n$, a prime power $q>1$, and set partition $\Lambda \vdash[n]$.
\begin{enumerate}
\item[(1)] If $|\Cr(\Lambda)| =t \in \{0,1\}$ then $N_{\Lambda,e}(q) =\begin{cases} q^t,&\text{if }e=d(\Lambda)-t, \\ 0,&\text{otherwise}.\end{cases}$
\item[(2)] If $|\Cr(\Lambda)|=2$ then $N_{\Lambda,e}(q) =\begin{cases} q^{2t-2},&\text{if }e=d(\Lambda)-t, \\ 0,&\text{otherwise},\end{cases}$  where 
\[ t = \begin{cases}  1,&\text{if there are $i,j,k$ with $(i,j),(j,k) \in \Cr(\Lambda)$ and $(i,k) \in \Arc(\Lambda)$},\\ 2,&\text{otherwise}.\end{cases}\] 

\item[(3)] $N_{\Lambda,e}(q) = N_{\Lambda^\dag,e}(q)$, where $\Lambda^\dag\vdash[n]$ is   given by applying the map $i\mapsto n+1-i$ to the parts of $\Lambda$.  Also, all irreducible constituents of supercharacters with shape $\Lambda$ are Kirillov functions if and only if  all irreducible constituents of supercharacters with shape $\Lambda^\dag$ are Kirillov functions.
\end{enumerate}
In cases (1) and (2), all irreducible constituents of supercharacters with shape $\Lambda$ are Kirillov functions. 
\end{proposition}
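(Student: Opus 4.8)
The plan is to route every case through Theorem~\ref{ref}, which expresses $N_{\Lambda,e}(q)$ in terms of character counts for the algebra groups $1+\fkcr_\Lambda(q)$ and $1+\wt\fkcr_\Lambda(q)$, so the first task in parts (1) and (2) is to pin down these tiny algebras explicitly. The governing observation is that a nonzero product $e_{ij}*e_{jk}=e_{ik}$ in $\fkcr_\Lambda(q)$ forces \emph{three} distinct crossings $(i,j),(j,k),(i,k)\in\Cr(\Lambda)$; hence whenever $|\Cr(\Lambda)|\le 2$ the multiplication on $\fkcr_\Lambda(q)$ vanishes identically, so $\fkcr_\Lambda(q)$ is abelian of dimension $|\Cr(\Lambda)|$ and $1+\fkcr_\Lambda(q)\cong(\FF_q^+)^{|\Cr(\Lambda)|}$. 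Passing to $\wt\fkcr_\Lambda(q)$, the only product among crossing-generators that can avoid vanishing is $e_{ij}*e_{jk}=z_\Lambda$ with $(i,k)\in\Arc(\Lambda)$. When $|\Cr(\Lambda)|\le 1$ there is no composable pair, so $\wt\fkcr_\Lambda(q)$ is abelian; when $|\Cr(\Lambda)|=2$, a short check using $i<j<k$ shows at most one such product is nonzero, and it is nonzero precisely in the $t=1$ case of part (2), where the two crossings together with $z_\Lambda$ span a copy of the Heisenberg algebra $\fkt_3(q)$, so $1+\wt\fkcr_\Lambda(q)\cong\UT_3(q)$.

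With these identifications I would substitute the character counts of $(\FF_q^+)^m$ (all $q^m$ characters linear) and of $\UT_3(q)$ ($q^2$ linear characters and $q-1$ characters of degree $q$) into the formula of Theorem~\ref{ref} with $f=|\Cr(\Lambda)|-d(\Lambda)+e$. In part (1) with $|\Cr(\Lambda)|=t\in\{0,1\}$ the numerator is $q^{t+1}-q^t=q^t(q-1)$ at $f=0$ and zero otherwise, giving $N_{\Lambda,e}(q)=q^t$ exactly at $e=d(\Lambda)-t$. The two cases of part (2) fall out identically: the $t=1$ case contributes $(q-1)/(q-1)=1=q^{2t-2}$ from the degree-$q$ characters of $\UT_3(q)$, while the $t=2$ case contributes $(q^3-q^2)/(q-1)=q^2=q^{2t-2}$, each at $e=d(\Lambda)-t$. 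In every instance here \emph{all} irreducible characters of $1+\wt\fkcr_\Lambda(q)$ are Kirillov functions—trivially for abelian algebra groups, and for $\UT_3(q)$ because $3\le 12$—so the furthermore clause of Theorem~\ref{ref} yields the Kirillov assertion closing cases (1) and (2).

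For part (3) the idea is to realize the reversal $i\mapsto n+1-i$ by the anti-automorphism $\beta$ of $\UT_n(q)$ given by transposition across the antidiagonal, $\beta(g)=w_0 g^T w_0$, where $w_0$ is the antidiagonal permutation matrix. The crucial feature is that $\beta(g)-1=\alpha(g-1)$, where $\alpha(e_{ij})=e_{n+1-j,\,n+1-i}$ is a linear involutive anti-automorphism of $\fkt_n(q)$; this linearity is exactly what makes pullback along $\beta$ transparent. I would verify that the dual map $\alpha^*$ carries the coadjoint orbit $\lambda^G$ onto $(\alpha^*\lambda)^G$ and the two-sided orbit $G\lambda G$ onto $G(\alpha^*\lambda)G$, so that $\psi_\mu\circ\beta=\psi_{\alpha^*\mu}$ and $\chi_\lambda\circ\beta=\chi_{\alpha^*\lambda}$ follow directly from the definitions \eqref{kirillov-def} and \eqref{superchar-def}. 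A computation of $(\alpha^*\lambda)_{kl}=\lambda_{n+1-l,\,n+1-k}$ then identifies the shape of $\alpha^*\lambda$ as $\Lambda^\dag$. Since $\beta$ is a bijection fixing the identity, $f\mapsto f\circ\beta$ preserves inner products and degrees, so it restricts to multiplicity-preserving bijections $\Irr(\UT_n(q),\chi_\lambda)\to\Irr(\UT_n(q),\chi_{\alpha^*\lambda})$ and $\Kir(\UT_n(q),\chi_\lambda)\to\Kir(\UT_n(q),\chi_{\alpha^*\lambda})$; restricting to degree $q^e$ gives $N_{\Lambda,e}(q)=N_{\Lambda^\dag,e}(q)$, and the compatibility of these two bijections under $\beta$ gives the equivalence of the Kirillov property for $\Lambda$ and $\Lambda^\dag$.

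The main obstacle is the orbit bookkeeping in part (3), namely checking that $\alpha^*$ permutes coadjoint (and two-sided) orbits rather than just permuting $\fkt_n(q)^*$. This is where the anti-automorphism property must be used with care: one rewrites $g^{-1}\alpha(X)g=\alpha\bigl(\alpha(g)\,X\,\alpha(g)^{-1}\bigr)$ using $\alpha(AB)=\alpha(B)\alpha(A)$ together with $\alpha^2=\mathrm{id}$, which after transport by $\alpha^*$ converts conjugation by $g$ into conjugation by $\alpha(g)$ and hence carries $\lambda^G$ to $(\alpha^*\lambda)^G$. Everything else—parts (1) and (2), and the counting half of (3)—is then routine substitution into Theorem~\ref{ref}.
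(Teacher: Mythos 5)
Your proposal is correct, and for parts (1) and (2) it takes a genuinely different route from the paper. You compute the crossing algebras $\fkcr_\Lambda(q)$ and $\wt\fkcr_\Lambda(q)$ explicitly (noting that a nonzero product in $\fkcr_\Lambda(q)$ requires three distinct crossings, so with $|\Cr(\Lambda)|\leq 2$ the multiplication vanishes, and that $\wt\fkcr_\Lambda(q)$ is either abelian or a Heisenberg algebra giving $1+\wt\fkcr_\Lambda(q)\cong \UT_3(q)$ exactly in the $t=1$ case) and then feed the known character counts into Theorem \ref{ref} uniformly; your arithmetic in all four subcases checks out, and the Kirillov claim follows from the furthermore clause of Theorem \ref{ref} since abelian algebra groups and $\UT_3(q)$ have only Kirillov characters. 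The paper instead handles (1) and the $t=2$ case of (2) via Corollary \ref{ref-cor} (closed crossing sets and pattern groups) and disposes of the $t=1$ case by citing Andr\'e's maximal-crossing theorem (Theorem \ref{max-cross}); your treatment is more self-contained and makes the source of the answers $q^t$ and $q^{2t-2}$ transparent, at the cost of a little explicit algebra. For part (3) your argument is essentially the paper's backwards-transpose argument, except that you compose with the anti-automorphism $\beta(g)=g^\dag$ directly where the paper uses the automorphism $\varphi(g)=(g^{-1})^\dag$. This creates one small gap you should patch: preservation of inner products and degrees under $f\mapsto f\circ\beta$ does not by itself show that $\psi\circ\beta$ is again an irreducible \emph{character} when $\beta$ is only an anti-automorphism; you need the standard observation that $g\mapsto\rho(\beta(g))^T$ is a representation (equivalently, that $\psi\circ\beta=\overline{\psi\circ\varphi}$ with $\varphi$ the genuine automorphism above). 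Your orbit bookkeeping for $\alpha^*$, which is the part you flagged as delicate, is carried out correctly.
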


\begin{proof}
If $|\Cr(\Lambda)|=0$ then any supercharacter with shape $\Lambda$ is irreducible by (\ref{fact}) and therefore a Kirillov function.  If $|\Cr(\Lambda)|=1$ then $\Cr(\Lambda)$ is a closed set of positions corresponding to a pattern group isomorphic to the additive group of $\FF_q$, all of whose irreducible characters are Kirillov functions.  Our formula in (1) thus follows from 
 Corollary \ref{ref-cor}.

If $t=2$ in (2) then the condition in  Corollary \ref{ref-cor} holds.  In this case the algebra group $1+\fkcr_\Lambda(q)$ is isomorphic to the direct product of two copies of the additive group $\FF_q^+$, and one sees that all of its irreducible characters are Kirillov functions.  Therefore any supercharacter with shape $\Lambda$ is equal to the multiplicity-free sum of $q^2$ irreducible characters of the same degree, which are all Kirillov functions.
On the other hand, the case $t=1$ in (2) follows by Theorem \ref{max-cross}.

If $X$ is an $n\times n$ matrix then let $X^\dag$ denote its \emph{backwards transpose}: this is the $n\times n$ matrix with $(X^\dag)_{i,j} = X_{n+1-j, n+1-i}$.  Any supercharacter with shape $\Lambda^\dag$ is then given by composing a supercharacter with shape $\Lambda$ with the automorphism $\varphi : g\mapsto (g^{-1})^\dag$ of $\UT_n(q)$.  Composition with an automorphism permutes the set of all irreducible characters of a given degree, proving the first statement in part  (3). To prove the second statement, observe that if a Kirillov function $\psi_\lambda$ for $\lambda \in \fkt_n(q)^*$ is a character of $\UT_n(q)$, then $\psi_\lambda \circ \varphi$ is  also a character, and so we have $\psi_\lambda \circ\varphi(g) = \overline{\psi_\lambda(g^\dag)}$ for $g\in \UT_n(q)$, which implies that $\psi_\lambda \circ\varphi$ is the Kirillov function indexed by the functional $X \mapsto \lambda(-X^\dag)$ in $\fkt_n(q)^*$.
Since $\varphi$ is an involution, we see by symmetry that an irreducible character $\psi$ of $ \UT_n(q)$ is a Kirillov function if and only if the irreducible character $\psi\circ \varphi$ is a Kirillov function. This suffices to prove our last assertion in (3) since composition with $\varphi$ exchanges the sets of irreducible constituents of supercharacters with shapes $\Lambda $ and $\Lambda^\dag$.
\end{proof}

Every crossing-connected set partition $\Lambda \vdash[n]$ for $n\leq 6$ has $|\Cr(\Lambda)| \leq 2$ except two which have $|\Cr(\Lambda)| = 3$.  The exceptions are
\[    \barr{c} \\ \xy<0.25cm,0.8cm> \xymatrix@R=-0.0cm@C=.5cm{
*{\bullet} \ar @/^1.5pc/ @{-} [rrr]   & 
*{\bullet} \ar @/^1.5pc/ @{-} [rrr] &
*{\bullet} \ar @/^1.5pc/ @{-} [rrr] &
*{\bullet} &
*{\bullet} &
*{\bullet} \\
1   & 
2 &
3  &
4 &
5 &
6
}\endxy\\
  \{ \{ 1,4\} ,\{2,5\}, \{3,6\} \} \vdash[6]
  \earr
  \qquad\text{and}\qquad 
   \barr{c} \\ \xy<0.25cm,0.8cm> \xymatrix@R=-0.0cm@C=.5cm{
*{\bullet} \ar @/^1pc/ @{-} [rr]   & 
*{\bullet} \ar @/^1pc/ @{-} [rr] &
*{\bullet} \ar @/^1pc/ @{-} [rr] &
*{\bullet}  \ar @/^1pc/ @{-} [rr]&
*{\bullet} &
*{\bullet} \\
1   & 
2 &
3  &
4 &
5 &
6
}\endxy\\
  \{ \{ 1,3,5\} ,\{2,4,6\} \} \vdash[6]
 \earr\] and so Conjecture \ref{ours} holds for $n\leq 5$.  If $\Lambda = \{ \{ 1,4\} ,\{2,5\}, \{3,6\} \} \vdash[6]$ then  $\Cr(\Lambda)$ is closed and $\UT_{6,\Cr(\Lambda)}(q) \cong \UT_3(q)$, so $N_{\Lambda,e}(q)$ is a polynomial in $q$ for all $e$ by Corollary \ref{ref-cor}.  
To treat the second case, we note that $\Lambda= \{ \{ 1,3,5\} ,\{2,4,6\} \} \vdash[6]$ is the shape of the supercharacter $\chi_\lambda$ indexed by 
\[\lambda = e_{13}^* + e_{24}^* + e_{35}^* + e_{46}^* \in \fkt_6(q)^*.\]  
Example 2.1 in \cite{supp1} computes the irreducible constituents of this supercharacter: $\chi_\lambda$ is a sum of $q$ distinct irreducible characters of degree $q^2$, each appearing with multiplicity $q$.  Hence $N_{\Lambda,e}(q)=q$ if $e=2$ and zero otherwise. By the corollary to Theorem \ref{products} we conclude:

 
\begin{observation} Conjecture \ref{ours} holds for $n\leq 6$.  
\end{observation}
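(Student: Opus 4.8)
The plan is to apply the corollary to Theorem \ref{products}, which reduces Conjecture \ref{ours} for $n \leq 6$ to the assertion that $N_{\Gamma,f}(q)$ is a polynomial in $q$ with integer coefficients for every crossing-connected set partition $\Gamma \vdash [k]$ with $k \leq 6$ and every $f \in \ZZ$. Since there are only finitely many such $\Gamma$ (nine in total, according to the crossing-connected column of Table \ref{tbl1}), the strategy is to enumerate them and verify the claim case by case. First I would list all crossing-connected set partitions of $[k]$ for $k \leq 6$ and record $\Arc(\Gamma)$, $\Cr(\Gamma)$, and $d(\Gamma)$ for each; the key preliminary observation is that all but two of these have $|\Cr(\Gamma)| \leq 2$.

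For the crossing-connected set partitions with $|\Cr(\Gamma)| \leq 2$, I would invoke Proposition \ref{eval}(1)--(2) directly. These give closed-form expressions for $N_{\Gamma,e}(q)$ as a single power of $q$ (namely $q^t$ or $q^{2t-2}$) for one value of $e$ and zero otherwise; each such expression is manifestly a polynomial in $q$ with integer coefficients. This disposes of seven of the nine cases with no further work.

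The remaining two cases are the crossing-connected set partitions with $|\Cr(\Gamma)| = 3$, namely $\{\{1,4\},\{2,5\},\{3,6\}\}$ and $\{\{1,3,5\},\{2,4,6\}\}$, and these are where the real content lies. For the first, I would observe that $\Cr(\Gamma)$ is a closed set of positions, so Corollary \ref{ref-cor} identifies $1 + \fkcr_\Gamma(q)$ with the pattern group $\UT_{6,\Cr(\Gamma)}(q) \cong \UT_3(q)$; consequently $N_{\Gamma,e}(q) = \#\Irr(\UT_3(q); q^f)$ for the shift $f = |\Cr(\Gamma)| - d(\Gamma) + e$, and since the irreducible character degrees of the Heisenberg group $\UT_3(q)$ are counted by polynomials in $q$, the claim follows. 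The second partition is the genuine obstacle: its crossing set $\{(1,2),(2,3),(3,4)\}$ is not closed and admits overlapping crossings $(1,2,3,4)$ and $(2,3,4,5)$, so the hypothesis of Corollary \ref{ref-cor} fails and one cannot reduce to a pattern group. Here I would fall back on the explicit decomposition of the associated supercharacter $\chi_\lambda$ with $\lambda = e_{13}^* + e_{24}^* + e_{35}^* + e_{46}^*$ computed in \cite{supp1}, which shows that $\chi_\lambda$ is a sum of $q$ distinct irreducible characters of degree $q^2$; thus $N_{\Gamma,e}(q) = q$ when $e = 2$ and $0$ otherwise, again a polynomial.

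With all nine crossing-connected cases verified, the corollary to Theorem \ref{products} then immediately yields Conjecture \ref{ours} for every $\Lambda \vdash [n]$ with $n \leq 6$. The main difficulty is concentrated entirely in the last case $\{\{1,3,5\},\{2,4,6\}\}$, which lies outside the reach of both Proposition \ref{eval} and the pattern-group simplification of Corollary \ref{ref-cor}, and which therefore requires an independent computation of the constituents of a genuinely reducible supercharacter of $\UT_6(q)$.
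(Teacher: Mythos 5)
Your proposal is correct and follows essentially the same route as the paper: reduce to the nine crossing-connected set partitions of $[k]$ for $k\leq 6$ via the corollary to Theorem \ref{products}, dispatch the seven with $|\Cr(\Gamma)|\leq 2$ by Proposition \ref{eval}, handle $\{\{1,4\},\{2,5\},\{3,6\}\}$ through Corollary \ref{ref-cor} and the pattern group $\UT_3(q)$, and settle $\{\{1,3,5\},\{2,4,6\}\}$ by the explicit decomposition of $\chi_\lambda$ from \cite{supp1}. No substantive differences from the paper's argument.
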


To check Conjecture \ref{ours} for higher values of $n$, we must apply Evseev's algorithm  to the crossing algebra groups $1+\fkcr_\Lambda(q)$ and $1+\wt\fkcr_\Lambda(q)$ as outlined 
in the remark following Theorem \ref{ref}.   
For $n\leq 11$, Evseev's {\sc{Magma}} implementation (without the modification described in Example \ref{ex13}) 
succeeds in computing polynomial formulas for $N_{\Lambda,e}(q)$ for every crossing-connected set partition $\Lambda \vdash[n]$.   Evseev's algorithm fails to compute $N_{\Lambda,e}(q)$ for exactly one set partition $\Lambda \vdash[12]$, but the algorithm does succeed for the transposed set partition $\Lambda^\dag$. 
%
%
In light of Proposition \ref{eval} and \cite[Proposition 4.1]{supp0}, this suffices to prove the first half of Theorem 1.4 in \cite{E}, which states that all irreducible characters of $\UT_n(q)$ are well-induced for $n\leq 12$.  

Evseev's algorithm similarly fails to compute $N_{\Lambda,e}(q)$ for exactly 34 crossing-connected set partitions $\Lambda \vdash[13]$.   In 32 of these problem cases, the algorithm  succeeds for the transposed partition $\Lambda^\dag$, so we can again invoke Proposition \ref{eval}.  The two remaining set partitions $\Lambda$ both have $\Lambda^\dag =\Lambda$, but
one of these is the set partition discussed in Example \ref{ex13}.
The other remaining case is $\Lambda = \{\{1,5,9,13\},\{2,8\},\{3,7,11\},\{4,10\},\{6,12\}\}$, and we can determine $N_{\Lambda,e}(q)$ for this set partition by computing the sum (\ref{imply}) over all successful cases and then subtracting this from Evseev's formulas for $N_{13,e}(q)$.  

The {\sc{Magma}} code we used to carry out these  computations is available online \cite{implement,implement2}.
The following theorem summarizes our results.

\begin{theorem}
Conjecture \ref{ours} holds for $n\leq 13$.  In particular, if $\Lambda \vdash[n]$ and $e \in \ZZ$, then
\begin{enumerate}
\item[(1)] $N_{\Lambda,e}(q)$ is a polynomial in $q-1$ with nonnegative integer coefficients if $n\leq 12$.

\item[(2)] If $\Lambda = \{ \{ 1,6,8,13\}, \{2,7,12\}, \{3,9\}, \{4,10\}, \{5,11\} \} \vdash[13]$ then $N_{\Lambda,20}(q)$ is a polynomial in $q-1$ with both positive and negative integer coefficients.  In detail,
\[N_{\Lambda,e}(q) = \begin{cases} (q-1)^4 + 4(q-1)^3 + 6(q-1)^2 + 4(q-1)+1,&\text{if }e=18, \\
2(q-1)^5 + 11(q-1)^4 + 22(q-1)^3 + 19(q-1)^2 + 6(q-1),&\text{if }e=19, \\
3(q-1)^4 + 7(q-1)^3 + 3(q-1)^2-(q-1),&\text{if }e=20, \\
(q-1)^3,&\text{if }e=21, \\
0,&\text{otherwise}.\end{cases}\]
For all other set partitions $\Lambda \vdash[13]$, $N_{\Lambda,e}(q)$ is a polynomial  in $q-1$ with nonnegative integer coefficients for all values of $e$.
\end{enumerate}
\end{theorem}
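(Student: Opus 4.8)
The plan is to leverage the machinery of the preceding sections to reduce the verification of Conjecture \ref{ours} for $n \leq 13$ to a finite computation. First I would invoke the corollary to Theorem \ref{products}, which reduces polynomiality of $N_{\Lambda,e}(q)$ for all $\Lambda \vdash[n]$ to polynomiality of $N_{\Gamma,f}(q)$ over all crossing-connected $\Gamma \vdash[k]$ with $k \leq n$. The number of such $\Gamma$ is tabulated in Table \ref{tbl1} and is far smaller than the Bell numbers $B_k$, so it suffices to produce each polynomial $N_{\Gamma,f}(q)$ and check, after rewriting it in the variable $q-1$, that its coefficients are nonnegative outside the one advertised exception.

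For crossing-connected $\Gamma$ with $|\Cr(\Gamma)| \leq 2$, Proposition \ref{eval} already supplies closed forms: these are single powers of $q$ (after the shift in $e$), hence polynomials in $q-1$ with nonnegative binomial coefficients. This handles every crossing-connected set partition of $[n]$ for $n \leq 5$, and for $n = 6$ leaves only the two set partitions with $|\Cr(\Gamma)| = 3$; one has $\Cr(\Gamma)$ closed and is dispatched by Corollary \ref{ref-cor}, while the other is settled by the explicit constituent computation cited from \cite{supp1}. Together these give Conjecture \ref{ours} for $n \leq 6$ by inspection.

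For the crossing-connected set partitions with $7 \leq k \leq 13$, the engine is Theorem \ref{ref}, which writes $N_{\Gamma,e}(q)$ as the $(q-1)$-normalized difference of the degree counts $\#\Irr(1+\wt\fkcr_\Gamma(q); q^f)$ and $\#\Irr(1+\fkcr_\Gamma(q); q^f)$ attached to the two crossing algebras of $\Gamma$. As these algebras are base changes of fixed nilpotent $\ZZ$-algebras, I would run Evseev's {\sc{Magma}} implementation \cite{implement} on each, obtaining polynomials in $q$ for the two counts whenever the algorithm terminates; subtracting, dividing by $q-1$, and collecting terms then yields $N_{\Gamma,e}(q)$. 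For $k \leq 11$ the algorithm succeeds on every crossing-connected $\Gamma$, and I would inspect each output term-by-term in $q-1$.

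The main obstacle is the small set of shapes on which Evseev's algorithm fails, stalling on a subquotient that is neither further reducible nor well-inducible. There is exactly one such $\Gamma \vdash[12]$ and there are $34$ for $[13]$; in all but two of these I would exploit the transpose symmetry $N_{\Gamma,e}(q) = N_{\Gamma^\dag,e}(q)$ of Proposition \ref{eval}(3), since the algorithm succeeds on $\Gamma^\dag$. The two truly resistant cases are self-transpose partitions of $[13]$. One is the partition of Example \ref{ex13}, which I would treat by the localized patch to Evseev's code described there, counting by hand the characters of the offending abelian subquotients. The other, $\Lambda = \{\{1,5,9,13\},\{2,8\},\{3,7,11\},\{4,10\},\{6,12\}\}$, I would determine indirectly: evaluate the sum (\ref{imply}) over all shapes already computed and subtract it from Evseev's independently established polynomial for $N_{13,e}(q)$, isolating $N_{\Lambda,e}(q)$. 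A concluding pass over all assembled polynomials, each rewritten in $q-1$, confirms nonnegativity everywhere except for $\Lambda = \{\{1,6,8,13\},\{2,7,12\},\{3,9\},\{4,10\},\{5,11\}\}$ at $e = 20$, whose displayed expansion carries a $-(q-1)$ term; reading this formula off the computed output gives the stated counterexample to the nonnegativity analogue of Isaacs's conjecture for $N_{\Lambda,e}(q)$.
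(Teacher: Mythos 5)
Your proposal is correct and follows essentially the same route as the paper: reduction to crossing-connected set partitions via the corollary to Theorem \ref{products}, the hand analysis for $n\leq 6$ using Proposition \ref{eval} and Corollary \ref{ref-cor}, Evseev's implementation applied to the crossing algebras via Theorem \ref{ref} for larger $n$, the transpose symmetry of Proposition \ref{eval}(3) for the failed cases, and the two self-transpose exceptions on $[13]$ handled exactly as the paper does (the Example \ref{ex13} patch and the subtraction against Evseev's $N_{13,e}(q)$ via (\ref{imply})). No gaps.
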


\begin{remarks}
\begin{enumerate}
\item[]
\item[(i)]
 The formulas we get for $N_{n,e}(q) = \sum_{\Lambda \vdash[n]} (q-1)^{n-\ell(\Lambda)} N_{\Lambda,e}(q)$ using our calculations coincide with those given by Evseev in \cite{E}, which gives at least some indication that our methods yield correct results.

\item[(ii)] The set partition in the second part of the theorem is not one of the problem cases mentioned above; the formulas given for $N_{\Lambda,e}(q)$ come directly from Evseev's unmodified algorithm.
\end{enumerate}
\end{remarks}

When $n=14$ the number of problem cases increases by an order of magnitude, so verifying Conjecture \ref{ours} computationally  requires more powerful algorithms.
   Theorem \ref{products} and Table \ref{tbl1} suggest that the computation of the functions $N_{\Lambda,e}(q)$ for all $\Lambda\vdash[n]$ should be tractable, however, for at least the first few values of $n>13$.   

\subsection{Polynomial formulas for $N_{n,e}(q)$ with $e\leq 8$}

Recall that $N_{n,e}(q)$ denotes the number of irreducible characters of $\UT_n(q)$ with degree $q^e$. Here we show how the computations described in the last section allow us to derive  bivariate polynomials in $n,q$ giving $N_{n,e}(q)$ for small values of $e$.


\def\wtN{\wt N}
\def\tbf{\textbf}

In this direction, we first describe a useful intermediate formula for $N_{n,e}(q)$.  For $n\geq 2$ and $e \in \ZZ$, define
\be\label{tilde-def}\ba \wt N_{1,e}(q) =\  & \begin{cases}q,&\text{if }e=0, \\ 0,&\text{otherwise},\end{cases}
\\
\\[-10pt]
 \wtN_{n,e}(q) =\ &\text{the number of irreducible characters of $\UT_{n+1}(q)$ of degree $q^e$} \\
&\text{appearing as constituents of supercharacters whose shapes have} \\
&\text{a crossing-connected component which involves both 1 and $n+1$.}\ea\ee
%
In other words, $\wtN_{n,e}(q)$ for $n\geq 2$  is the sum \[\wtN_{n,e}(q) = \sum_\Lambda (q-1)^{n+1-\ell(\Lambda)} N_{\Lambda,e}(q)\] over all set partitions $\Lambda \vdash[n+1]$ which have a sequence of arcs $(i_t,j_t) \in \Arc(\Lambda)$, $t=1,\dots,k$, such that $i_1=1$ and $j_k=n+1$ and  $i_t<i_{t+1} < j_t < j_{t+1}$ for all $t$.

In the following statement,  we recall that a composition $\tbf c$ of an integer $x$ is a sequence of positive integers with $\sum_i \tbf c_i = x$, while a weak composition $\tbf w$ of $x$ is a sequence of nonnegative integers with $\sum_i \tbf w_i = x$.  We denote the number of elements in the sequences giving $\tbf c$ and $\tbf w$ by $\ell(\tbf c)$ and $\ell(\tbf w)$, respectively.

\begin{theorem} \label{tilde} Fix a prime power $q>1$.  Then for all integers $n\geq 2$ and $e\geq 0$, the number $N_{n,e}(q)$ of irreducible characters of $\UT_n(q)$ of degree $q^e$ is equal to
\[ N_{n,e}(q) = \sum_{(\tbf c, \tbf w)} \prod_{i=1}^{\ell( \tbf c )}\wtN_{\textbf{c}_i,\tbf w_i}(q)
,
\] where the sum is over all pairs $(\tbf c, \tbf w)$ with $\tbf c$  a composition of $n-1$ and $\tbf w$ a weak composition of $e$ such that $\ell(\tbf c) = \ell(\tbf w)$.  
\end{theorem}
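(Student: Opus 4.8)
The plan is to reorganize the defining sum $N_{n,e}(q) = \sum_{\Lambda \vdash [n]} (q-1)^{n-\ell(\Lambda)} N_{\Lambda,e}(q)$ according to a decomposition of each shape into maximal ``spanning blocks,'' and then to recognize the contribution of each block as a factor $\wtN_{c_i,w_i}(q)$. Call a vertex $j \in [n]$ a \emph{cut vertex} of $\Lambda$ if no arc straddles it, i.e.\ there is no $(a,b) \in \Arc(\Lambda)$ with $a < j < b$; the endpoints $1$ and $n$ are always cut vertices. Listing the cut vertices as $1 = p_0 < p_1 < \dots < p_k = n$ produces a composition $\textbf{c} = (p_1-p_0,\dots,p_k-p_{k-1})$ of $n-1$, and I would partition the set partitions $\Lambda \vdash [n]$ according to this composition.

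First I would establish the factorization of $N_{\Lambda,e}(q)$ along the cut vertices. Since no arc straddles any $p_i$, every arc of $\Lambda$ lies in a single block $[p_{i-1},p_i]$, so $\Arc(\Lambda)$ is the disjoint union of the arc sets of the blocks; moreover two arcs from different blocks cannot interleave, so $\Cr(\Lambda)$ splits disjointly as well. Writing $A_i \vdash [n]$ for the shape carrying only the arcs of the $i$-th block, an iterated application of Lemma \ref{lem1} gives $N_{\Lambda,e}(q) = \sum_{w_1+\dots+w_k=e} \prod_i N_{A_i,w_i}(q)$. Each block $[p_{i-1},p_i]$ is an \emph{interval} of integers, so its standardization $\beta_i = \st(\Lambda|_{[p_{i-1},p_i]}) \vdash [c_i+1]$ is obtained from $A_i$ by a mere translation and deletion of singletons; in particular $d$ is preserved and the degree shift $f_\Lambda$ of (\ref{arb-def}) vanishes, so Lemma \ref{lem2} with (\ref{arb-def}) yields $N_{A_i,w_i}(q) = N_{\beta_i,w_i}(q)$ with no shift. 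This simplification is special to the fact that blocks are intervals, unlike the crossing-connected components of Theorem \ref{products}.

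Next I would set up the bijection and account for the weight. For fixed $\textbf{c}$, sending $\Lambda$ to $(\beta_1,\dots,\beta_k)$ is a bijection onto tuples of shapes $\beta_i \vdash [c_i+1]$ having no interior cut vertex, the inverse being the gluing of consecutive blocks along their shared endpoint $p_i$ (the absence of interior cut vertices in each $\beta_i$ guarantees that the glued shape has cut vertices exactly $\{p_0,\dots,p_k\}$). Since each interior cut vertex $p_i$ lies in one part of $\beta_i$, as its maximum, and one part of $\beta_{i+1}$, as its minimum, and these merge in $\Lambda$, one has $\ell(\Lambda) = \sum_i \ell(\beta_i) - (k-1)$, whence $(q-1)^{n-\ell(\Lambda)} = \prod_i (q-1)^{(c_i+1)-\ell(\beta_i)}$ after using $\sum_i(c_i+1) = (n-1)+k$. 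Interchanging the sums over $\Lambda$ (equivalently over $(\beta_i)$) and over the $w_i$ collapses the fixed-$\textbf{c}$ contribution into $\sum_{w_1+\dots+w_k=e} \prod_i \bigl[\sum_{\beta_i} (q-1)^{(c_i+1)-\ell(\beta_i)} N_{\beta_i,w_i}(q)\bigr]$, and summing over $\textbf{c}$ gives the claimed formula provided the inner bracket equals $\wtN_{c_i,w_i}(q)$.

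The crux, and the step I expect to be the main obstacle, is to show that the shapes $\beta \vdash [M]$ with no interior cut vertex are exactly those indexing $\wtN_{M-1,\cdot}(q)$, namely (for $M \geq 3$) the shapes possessing a crossing-connected component involving both $1$ and $M$. One direction is easy: a chain $(i_1,j_1),\dots,(i_s,j_s)$ with $i_1=1$, $j_s=M$ and $i_t < i_{t+1} < j_t < j_{t+1}$ covers every interior vertex, so no cut vertex can exist. The reverse direction requires producing such a chain from the hypothesis that every interior vertex is straddled; the key local observation is that a vertex straddled between two non-crossing arcs must be straddled by a third arc crossing both, so full coverage forces crossing-connectivity across the whole interval. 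I would formalize this by a greedy construction that repeatedly extends the chain: beginning with the arc out of $1$ reaching furthest right, at each stage the current right endpoint, if interior, is straddled by an arc which, by maximality of the previous choice, must cross the last arc of the chain, advancing both coordinates until $M$ is reached. The case $M=2$ (blocks between adjacent cut vertices) is handled separately and matches the special value $\wtN_{1,0}(q)=q$ with $\wtN_{1,e}(q)=0$ for $e\neq 0$, since both the single-arc and no-arc shapes on $[2]$ occur and contribute $(q-1)+1 = q$. Once this identification is in place, the inner bracket above is precisely $\wtN_{c_i,w_i}(q)$, which completes the proof.
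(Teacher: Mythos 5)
Your proposal is correct and follows essentially the same route as the paper: your ``cut vertices'' are exactly the paper's \emph{outline} $(a_0,\dots,a_\ell)$, your $\beta_i$ are its $\Lambda(i)$, and the paper likewise factors $N_{\Lambda,e}(q)$ over the blocks (via Theorem \ref{products}, whose proof is just your iterated use of Lemmas \ref{lem1} and \ref{lem2}), identifies blocks with the sets $\mathscr{S}_{n_i}$, distributes the weight $(q-1)^{n-\ell(\Lambda)}$ multiplicatively, and sums over outlines viewed as compositions of $n-1$. The only difference is that you supply an explicit greedy argument for the step the paper merely asserts (that a block with no interior cut vertex carries a crossing chain from its left to its right endpoint), which is a welcome elaboration rather than a divergence.
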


\begin{proof}
In the standard representation of a set partition $\Lambda \vdash[n]$,
draw vertical lines through each vertex, and let the sequence of integers $1 = a_0 < a_1 <\dots < a_\ell = n$ index the vertices at which these lines do not intersect any arcs of $\Lambda$.  For example, if $\Lambda\vdash[13]$ is given by
\be\label{ex-outline} \barr{c} \\ \\  \\[-40pt]
\xy<0.25cm,0.8cm> \xymatrix@R=-0.0cm@C=.5cm{
*{} & *{} & *{} & *{} & *{} & *{} & *{} & *{} & *{} & *{} & *{} & *{} & *{}
\\
\\
\\ 
\\
\\
*{\bullet} \ar @{-} [uuuuu] \ar @/^1.2pc/ @{-} [rr] &
*{\bullet}  \ar @{-} [uuuuu]\ar @/^1.6pc/ @{-} [rrr] &
*{\bullet}  \ar @{-} [uuuuu]&
*{\bullet} \ar @{-} [uuuuu]&
*{\bullet} \ar @{-} [uuuuu] &
*{\bullet} \ar @{-} [uuuuu] \ar @/^.8pc/ @{-} [r] &
*{\bullet}\ar @{-} [uuuuu]&
*{\bullet}\ar @{-} [uuuuu]&
*{\bullet}\ar @{-} [uuuuu] &
*{\bullet}\ar @{-} [uuuuu]\ar @/^1.6pc/ @{-} [rrr] &
*{\bullet}\ar @{-} [uuuuu]\ar @/^.8pc/ @{-} [r] &
*{\bullet}\ar @{-} [uuuuu]&
*{\bullet} \ar @{-} [uuuuu]&
\\
1   & 
2 &
3  &
4 &
5 &
6 &
7 &
8 &
9 &
10 &
11 &
12 &
13 
}\endxy\\
\Lambda= \{ \{ 1,3\}, \{2,5\}, \{4\}, \{6,7\}, \{8\}, \{9\}, \{10,13\}, \{11,12\} \} \vdash[13]\\ \earr\ee then $(a_0,a_1,\dots,a_7) = (1,5,6,7,8,9,10,13)$.
Call the sequence $a=(a_0,a_1,\dots,a_\ell)$ the \emph{outline} of $\Lambda$.  
 Given a set partition $\Lambda \vdash[n]$ with outline $a= (a_0,a_1,\dots, a_\ell)$,  define $\Lambda(i)$ for $i=1,\dots,\ell$ as the set partition of $[a_i-a_{i-1}+1]$ formed by intersecting each part of $\Lambda$ with the interval $[a_{i-1},a_i]$, excluding all instances of the empty set, and then standardizing the result.  
  E.g., in (\ref{ex-outline}) we have 
 \[ \ba  \Lambda(1)&= \{ \{1,3\},\{2,5\}, \{4\}\},
 \\
  \Lambda(2)  =\Lambda(4) =\Lambda(5) =\Lambda(6)&= \{ \{1\}, \{2\}\},
  \ea\qquad\ba
  \Lambda(3) & = \{ \{1,2\} \},
  \\
        \Lambda(7)  &= \{ \{1,4\}, \{2,3\} \}.\ea
        \] 
      
      The vertical line through vertex $i$ intersects no arcs of $\Lambda$ if and only if there is no  $(x,y) \in \Arc(\Lambda)$ with $x<i<y$.  This observation has two consequences.  First, it shows that  $\Lambda(i)$ 
   must have a crossing-connected component involving    both 1 and $a_i-a_{i-1}+1$ whenever  $a_i-a_{i-1}>1$.  Second, 
each crossing-connected component of $\Lambda$  must partition a subset of one of the intervals $[a_{i-1},a_i]$, and so  it follows from Theorem \ref{products}  that 
        $ N_{\Lambda,e}(q) = \sum_{\textbf{w}} \prod_{i=1}^\ell N_{\Lambda(i),\textbf{w}_i}(q),$
         where the sum is over all weak compositions $\textbf{w}$ of $e$ with $\ell$ parts.  Also, since $n-\ell(\Lambda)$ is the cardinality of $\Arc(\Lambda)$, we have $n-\ell(\Lambda) = \sum_{i=1}^\ell \(a_i-a_{i-1}+1-\ell(\Lambda(i))\)$.
                
        Let $\mathscr{S}_k$ for $k\geq 2$ be the set of all set partitions of $[k+1]$ with a crossing-connected component involving both 1 and $k+1$, and let $\mathscr{S}_1$ be the set whose two elements are the distinct set partitions of $\{1,2\}$.  Fix a sequence $1=a_0 < a_1<\dots < a_\ell = n$ and write $n_i = a_i-a_{i-1}$.    It is apparent that the map 
\[\barr{ccc} 
 \Bigl\{\text{ Set partitions of $[n]$ with outline $(a_0,a_1,\dots,a_\ell)$ }\Bigr\} & \to & \mathscr{S}_{n_1} \times \mathscr{S}_{n_2} \times \cdots \times \mathscr{S}_{n_\ell} \\
 \Lambda & \mapsto & \(\Lambda(1),\ \Lambda(2),\ \dots,\ \Lambda(\ell)\)\earr\] is a bijection.  
  Combining this with the observations in the previous paragraph, one deduces that the sum of $(q-1)^{n-\ell(\Lambda)} N_{\Lambda,e}(q)$ over all $\Lambda \vdash[n]$ with outline $(a_0,a_1,\dots, a_\ell)$ is
 \[ \sum_{\Lambda} (q-1)^{n-\ell(\Lambda)} N_{\Lambda,e}(q) = \sum_{\textbf{w}} \prod_{i=1}^\ell\( \sum_{\Gamma \in \mathscr{S}_{n_i}} (q-1)^{n_i+1 -\ell(\Gamma)} N_{\Gamma, \textbf{w}_i}(q)\),\] where the outer sum is over all weak compositions $\textbf{w}$ of $e$ with $\ell$ parts.     If $\Gamma \vdash[2]$ then $N_{\Gamma,e}(q) = \delta_{e0}$, and noting this, one sees that the parenthesized sum is precisely $\wtN_{n_i,\textbf{w}_i}(q)$.
By summing the preceding equation over all possible outlines $(a_0,a_1,\dots,a_\ell)$, we obtain
$ N_{n,e}(q) = \sum_{(a,\textbf{w})} \prod_{i=1}^{\ell(\textbf{w})} \wtN_{a_{i}-a_{i-1}, \textbf{w}}(q)
$
 where the sum is over all pairs $(a,\textbf{w})$ where $a = (a_0,a_1,\dots,a_\ell)$ is a sequence of integers with $1=a_0<a_1<\dots<a_\ell = n$ and $\textbf{w}$ is a weak composition of $e$ with $\ell=\ell(\textbf{w})$ parts.  The theorem now follows by noting that the map $a\mapsto (a_1-a_0, a_2-a_1,\dots, a_{\ell}-a_{\ell-1})$ defines a bijection from possible outlines of $\Lambda \vdash[n]$ to compositions of $n-1$ with $\ell$ parts.
\end{proof}

When $e\leq 8$, we can show that $\wtN_{n,e}(q) =0$ for all but finitely many values of $n$; we suspect but cannot prove that the same is true for all values of $e$.  
Since Evseev's algorithm with Theorem \ref{ref}    allows us to compute the nonzero functions $\wtN_{n,e}(q)$,   the 
preceding result will  thus determine a  formula in $n$ and $q$ for $N_{n,e}(q)$. 
In this direction, we first make the following elementary observation.

\begin{observation}
If $\Lambda$ is any set partition then $N_{\Lambda,e}(q) = 0$ whenever $ d(\Lambda) - |\Cr(\Lambda)| > e$.  
\end{observation}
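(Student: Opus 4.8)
The plan is to reduce the claim immediately to the character-counting formula of Theorem \ref{ref}, which expresses $N_{\Lambda,e}(q)$ as the difference
$\frac{1}{q-1}\bigl(\#\Irr(1+\wt\fkcr_\Lambda(q); q^f) - \#\Irr(1+\fkcr_\Lambda(q); q^f)\bigr)$
with $f = |\Cr(\Lambda)| - d(\Lambda) + e$. The first observation I would make is purely arithmetic: the hypothesis $d(\Lambda) - |\Cr(\Lambda)| > e$ is exactly the assertion that $f < 0$.

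Next I would invoke the fact recorded in Section \ref{alg} (from the theorems of Isaacs and Halasi) that every irreducible character of an algebra group over $\FF_q$ has degree a power of $q$; since such a degree is a positive integer and $q > 1$, it must equal $q^k$ for some nonnegative integer $k$. Consequently, when $f < 0$ the quantity $q^f$ is not the degree of any irreducible character, so both $\#\Irr(1+\wt\fkcr_\Lambda(q); q^f)$ and $\#\Irr(1+\fkcr_\Lambda(q); q^f)$ vanish. Substituting into the formula of Theorem \ref{ref} yields $N_{\Lambda,e}(q) = 0$, as desired. Because that formula is valid for every $\Lambda$, this argument also covers the degenerate cases (for instance $\Cr(\Lambda) = \varnothing$) without separate treatment.

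I do not expect a genuine obstacle here: the whole content is recognizing that the hypothesis forces a negative exponent, after which the nonnegativity of character-degree exponents does the work. The only point needing a one-line remark is the interpretation of $\#\Irr(G; q^f)$ as counting characters of the literal degree $q^f$, which is empty for $f<0$. As a more self-contained alternative that avoids Theorem \ref{ref}, I could argue directly from (\ref{fact}): one has $\chi_\lambda(1) = q^{d(\Lambda)}$ and $\langle\chi_\lambda,\chi_\lambda\rangle = q^{|\Cr(\Lambda)|}$, and since every constituent of $\frac{|G\lambda|}{|G\lambda\cap\lambda G|}\chi_\lambda$ occurs with multiplicity equal to its degree, the multiplicity in $\chi_\lambda$ of a constituent of degree $q^{e}$ is $q^{|\Cr(\Lambda)| - d(\Lambda) + e}$; this is a positive integer only when $e \geq d(\Lambda) - |\Cr(\Lambda)|$, giving the claim. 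I would nonetheless lead with the Theorem \ref{ref} version, as it is the shortest.
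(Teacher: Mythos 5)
Your proposal is correct, and your lead argument takes a genuinely different route from the paper's. The paper proves the observation in one line directly from (\ref{fact}) and the remarks in Section \ref{supercharacters}: since $\frac{|G\lambda|}{|G\lambda\cap\lambda G|}\chi_\lambda = q^{d(\Lambda)-|\Cr(\Lambda)|}\chi_\lambda$ is the character of a two-sided ideal of $\CC G$, each of its irreducible constituents occurs with multiplicity equal to its degree, and that multiplicity is at least $q^{d(\Lambda)-|\Cr(\Lambda)|}$ --- which is exactly the ``self-contained alternative'' you sketch at the end. Your primary argument instead routes through Theorem \ref{ref}: the hypothesis forces $f = |\Cr(\Lambda)|-d(\Lambda)+e < 0$, so $q^f$ is not an integer and both counts $\#\Irr(1+\wt\fkcr_\Lambda(q);q^f)$ and $\#\Irr(1+\fkcr_\Lambda(q);q^f)$ vanish. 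That is valid (Theorem \ref{ref} precedes the observation, so there is no circularity), but it imports the whole crossing-algebra machinery to prove something the multiplicity argument handles in one sentence; the direct argument is also the one that generalizes most transparently to set partitions of arbitrary $\cS\subset\NN$ via the standardization convention (\ref{arb-def}), since $|\Cr(\Lambda)|=|\Cr(\st(\Lambda))|$ and the degree shift $f_\Lambda = d(\Lambda)-d(\st(\Lambda))$ exactly preserves the hypothesis. I would therefore lead with your alternative rather than the Theorem \ref{ref} version, but both are sound.
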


\begin{proof}
By (\ref{fact}) and the remarks in Section \ref{supercharacters}, if $\chi$ is a supercharacter of $\UT_n(q)$ with shape $\Lambda$ then each irreducible constituent of $q^{d(\Lambda)-|\Cr(\Lambda)|} \chi$ appears with multiplicity equal to its degree; however, this multiplicity is obviously at least $q^{d(\Lambda)-|\Cr(\Lambda)|}$.
\end{proof}

For any given integer $e\geq 0$, there are still an infinite number of set partitions $\Lambda$ with $d(\Lambda) - |\Cr(\Lambda)| \leq e$ and $N_{\Lambda,e}(q)\neq 0$.  Nevertheless, for $e\leq 8$, there are only a finite number of crossing-connected set partitions with $N_{\Lambda,e}(q)\neq 0$.  To prove this  we depend on the following technical lemma.

\begin{lemma}
Fix an integer $f\geq 0$ and suppose that for all integers $e,n$ with $0\leq e< \frac{n-3}{2}\leq f$, we have
$N_{\Lambda,e}(q) =0$ whenever $\Lambda\vdash[n]$ is crossing-connected.  Then for all integers $e,n$ with $e\leq f$ and $n>2e+2$, we have $N_{\Lambda,e}(q) =0$ whenever $\Lambda \vdash[n]$ is crossing-connected.
\end{lemma}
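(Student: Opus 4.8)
The plan is to argue by a minimal-counterexample / strong induction on $n$, using the hypothesis to cover a bounded band of $n$ and a degree-lowering reduction to push the conclusion out to all larger $n$. First I would isolate exactly what is new. If $\Lambda\vdash[n]$ is crossing-connected with $e\le f$ and $2e+4\le n\le 2f+3$, then $0\le e<\tfrac{n-3}{2}\le f$, so the hypothesis already gives $N_{\Lambda,e}(q)=0$. Hence the conclusion exceeds the hypothesis only on two families of pairs $(e,n)$: the boundary line $n=2e+3$ (with $e\le f$), and the range $n>2f+3$ (with $e\le f$). So it suffices to treat these two cases, and in both I would assume for contradiction that $N_{\Lambda,e}(q)\neq 0$ for a crossing-connected $\Lambda\vdash[n]$.

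The engine would be a reduction that turns such a nonvanishing into a nonvanishing for a crossing-connected set partition on fewer points. By Theorem \ref{ref}, $N_{\Lambda,e}(q)\neq 0$ is equivalent to the existence of an irreducible character of degree $q^{f}$ of the algebra group $1+\wt\fkcr_\Lambda(q)$, where $f=|\Cr(\Lambda)|-d(\Lambda)+e$, carrying a prescribed nontrivial central character on $z_\Lambda$. The reduction I would aim for deletes an \emph{extreme} arc of $\Lambda$ (for example the arc leaving vertex $1$ or the arc entering vertex $n$) and passes to the crossing algebra of the resulting smaller set partition $\Lambda'\vdash[n']$ with $n'<n$, showing that the prescribed character descends to a character of $1+\wt\fkcr_{\Lambda'}(q)$ of a controlled degree, so that $N_{\Lambda',e'}(q)\neq 0$ for a corresponding $e'$. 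Crossing-connectedness of $\Lambda'$ should be inherited: since all arcs of $\Lambda$ lie in a single crossing-equivalence class, removing one extreme arc ought to leave the remaining arcs still linked.

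Granting such a step, I would close the induction by iterating it while tracking the excess $n-2e$, which satisfies $n-2e\ge 3$ throughout because $n>2e+2$. In the large case, iterating decreases $n$ until it first lands in the range $n'\le 2f+3$; if the reduction keeps $n'>2e'+3$ (equivalently $n'-2e'\ge 4$), then $(e',n')$ lies in the hypothesis region, forcing $N_{\Lambda',e'}(q)=0$ and contradicting the nonvanishing propagated from $\Lambda$. In the boundary case $n=2e+3$, the aim is for the reduction to strictly increase the excess (driving the configuration off the line and into the interior, where the previous paragraph applies), or else to descend to a crossing-connected set partition of $[3]$, of which there are none by Table \ref{tbl1}; either way one reaches a contradiction. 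The genuinely small residual cases, where $|\Cr(\Lambda)|\le 2$, can be handled outright by Proposition \ref{eval}, and the completely degenerate even-crossing situations by Theorem \ref{max-cross}, so these do not obstruct the induction.

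The hard part will be making the reduction step precise, and in particular controlling the degree of the surviving character so that $(e',n')$ lands where vanishing is known; merely decreasing $n$ is not enough. A naive reduction $(n,e)\mapsto(n-2,e-1)$ cannot work in general: the crossing-connected partition $\{\{1,4\},\{2,5\},\{3,6\}\}\vdash[6]$ has $\Cr(\Lambda)=\{(1,2),(1,3),(2,3)\}$, so $1+\fkcr_\Lambda(q)\cong\UT_3(q)$ and $\Lambda$ has a constituent of degree $q^{3}$, whereas the only crossing-connected $\Lambda'\vdash[4]$ has a constituent of degree $q^{1}$ and none of degree $q^{2}$. Thus which extreme arc to delete, and its exact effect on $f$, must be chosen carefully, almost certainly exploiting that $n$ is large relative to $e$ to guarantee that a suitable removable configuration exists. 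Verifying both that crossing-connectedness persists under the chosen deletion and that the degree bookkeeping places $(e',n')$ inside the hypothesis region is where essentially all the difficulty lies.
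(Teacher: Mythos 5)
Your skeleton is right---induct on $n$ and delete the arc $(a,n)$ ending at the last vertex, which exists because a crossing-connected partition has every vertex on some arc and no arc of the form $(j,j+1)$---but two essential ingredients are missing, and one assertion is false. The claim that crossing-connectedness ``should be inherited'' by the truncated partition is wrong: deleting $(a,n)$ can disconnect the crossing graph. For instance $\Lambda$ with $\Arc(\Lambda)=\{(1,3),(2,7),(4,6),(5,8)\}$ is crossing-connected, but after deleting $(5,8)$ and the vertex $8$, the arcs $(1,3),(2,7)$ and $(4,6)$ fall into separate crossing-components and $5$ becomes a singleton. The paper's proof spends its entire second half on exactly this failure: it decomposes the truncated partition $\Gamma$ into crossing-connected components $\Gamma_1,\dots,\Gamma_k$, observes that every component not containing $a$ must contain an arc straddling $a$ (because that arc crossed the deleted arc $(a,n)$), so that $d(\Gamma_i)-d(\st(\Gamma_i))\geq 1$ for $i\geq 2$, and then balances these $k-1$ shifts against $\sum_i\lfloor\tfrac{\ell_i-1}{2}\rfloor$ to recover the bound $m(\Gamma)\geq f+1$. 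Without this analysis your induction hypothesis simply does not apply to the truncated object.

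The second missing piece is the degree control you yourself flag as ``where essentially all the difficulty lies.'' The paper does not descend characters through the crossing algebras at all; it works at the level of supercharacters. Writing $\chi_\lambda=\chi_\gamma\otimes\chi_\kappa$ via Lemma \ref{form} (where $\kappa=e_{a,n}^*$), it proves the monotonicity $m(\Lambda)\geq m(\Gamma)$, where $m(\cdot)$ is the least degree of an irreducible constituent. The mechanism is the multiplicity-equals-degree property: a degree-$q^e$ constituent $\psi$ of $\chi_\gamma$ occurs with multiplicity $q^{e-(d(\Gamma)-|\Cr(\Gamma)|)}$, so $\psi\otimes\chi_\kappa$ occurs in the regular representation with multiplicity $q^{e+e'}$ where $e'=(d(\Lambda)-|\Cr(\Lambda)|)-(d(\Gamma)-|\Cr(\Gamma)|)\geq 0$, forcing every constituent of $\psi\otimes\chi_\kappa$ to have degree at least $q^e$. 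This one inequality replaces all of your case analysis on the boundary line $n=2e+3$ versus $n>2f+3$ and renders the ``excess $n-2e$'' bookkeeping unnecessary. As written, your proposal identifies the correct reduction but supplies neither the propagation of the degree bound nor the repair when connectedness breaks, so it does not close.
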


\begin{proof}
For any set partition $\Lambda$ let $m(\Lambda)$ be the least integer such that $N_{\Lambda,m(\Lambda)}(q) \neq 0$. By assumption, $m(\Lambda) \geq \lfloor \frac{n-1}{2} \rfloor$ if $\Lambda \vdash[n]$ is crossing-connected and $n\leq 2f+3$. 
Let $n > 2f+3$ and choose a crossing-connected set partition $\Lambda\vdash[n]$.  Suppose $m(\Gamma) \geq f+1$ if $\Gamma\vdash[n']$ is crossing-connected and $2f+3\leq n' < n$; to prove the lemma it suffices by induction to show that $m(\Lambda) \geq f+1$.

Because $\Lambda$ is crossing-connected, every $j \in [n]$ must be involved in some arc of $\Lambda$, yet no arcs have the form $(j,j+1)$, since such arcs cannot be involved in crossings.  This implies that $(a,n) \in \Arc(\Lambda)$ for some $a \in [n-2]$. 
Let $\Gamma \vdash[n-1]$ be the set partition formed by deleting the vertex $n$ and the arc $(a,n)$ from the standard representation of $\Lambda$, and define $\lambda,\kappa,\gamma \in \fkt_{n}(q)^*$ as the maps given by 
\[\lambda = \sum_{(i,j) \in \Arc(\Lambda)} e_{i,j}^*\qquad\text{and}\qquad
 \kappa = e_{a,n}^*\qquad\text{and}\qquad \gamma = \lambda -\kappa.\] 
Then $N_{\Lambda,e}(q)$ is the number of constituents of the supercharacter $\chi_\lambda$ of degree $q^e$; $N_{\Gamma,e}(q)$ is the number of constituents of the supercharacter $\chi_\gamma$ of degree $q^e$ by Lemma \ref{lem2};
  and $\chi_\lambda = \chi_\gamma\otimes \chi_\kappa$ by Lemma \ref{form}.
Let $\psi$ be an irreducible constituent of $\chi_\gamma$ of degree $q^e$.  Then $e \geq m(\Gamma)$ and $\psi$ appears in $\chi_\gamma$ with multiplicity $q^{e- (d(\Gamma)-|\Cr(\Gamma)|)}$ since $q^{d(\Gamma)-|\Cr(\Gamma)|}$ is the multiplicity of $\chi_\gamma$ in the regular representation of $\UT_n(q)$.  The possibly reducible product $\psi \otimes \chi_\kappa$ therefore appears in
$\chi_\lambda$ with multiplicity $q^{e- (d(\Gamma)-|\Cr(\Gamma)|)}$ and in the regular representation of $\UT_n(q)$ with multiplicity  $q^{e+e'}$ where $e'=(d(\Lambda) - |\Cr(\Lambda)|)- (d(\Gamma)-|\Cr(\Gamma)|)$. 
It is apparent from the definitions (\ref{cr-d}) that $e'\geq 0$, and so every irreducible constituent of $\psi \otimes \chi_\kappa$ has degree at least $q^e$.  Since every irreducible constituent of $\chi_\lambda$ is a constituent of some such product $\psi \otimes \chi_\kappa$, it follows that $m(\Lambda) \geq m(\Gamma)$.

Therefore, it suffices to show that $m(\Gamma) \geq f+1$.  This is not immediate by hypothesis because $\Gamma$ is not necessarily crossing-connected.
Let $\Gamma_1,\dots,\Gamma_k$ be the crossing-connected components of $\Gamma$; let $\Gamma_i'$ be the standardization of $\Gamma_i$; and let $\cS_i$ be the subset of $[n-1]$ and $\ell_i$ the positive integer such that $\Gamma_i \vdash[\cS_i]$ and $\Gamma_i' \vdash[\ell_i]$.  
  After we recall the notation in (\ref{arb-def}), it follows by Theorem \ref{products}, that 
\[m(\Gamma) = \sum_{i=1}^k m(\Gamma_i) = \sum_{i=1}^k m(\Gamma_i') + \sum_{i=1}^k \(d(\Gamma_i) - d(\Gamma_i')\).\]  Recall that $(a,n) \in \Arc(\Lambda)$ for some $a \in [n-2]$.
Necessarily $a\in \cS_i$ for exactly one $i \in [k]$; we may assume $i=1$.  Then the arc $(a,n)$  must cross at least one arc in every remaining crossing-connected component $\Gamma_2,\dots, \Gamma_k$ of $\Gamma$.
Each $\Gamma_i$ for $i>1$ therefore has an arc of the form $(x,y)$ with $x<a<y$, yet $a \notin \cS_i$, so by definition $d(\Gamma_i) - d(\Gamma_i') \geq 1$.  Hence $\sum_{i=1}^k \(d(\Gamma_i) - d(\Gamma_i')\) \geq k-1$.  

By hypothesis 
\[ \sum_{i=1}^k m(\Gamma_i') \geq \sum_{i=1}^k \min \left\{ \left \lfloor \frac{\ell_i-1}{2} \right\rfloor , f+1\right\} \geq \min \left\{ \sum_{i=1}^k \left\lfloor \frac{\ell_i-1}{2} \right\rfloor, f+1 \right\}.\]  
 Let $t$ be the number of $\ell_i$'s which are odd.  Then 
 $ \sum_{i=1}^k \left\lfloor \frac{\ell_i-1}{2} \right\rfloor = \frac{1}{2}\(t+\sum_{i=1}^k \ell_i\)  - k$ and it follows that 
 \be\label{f+1} m(\Gamma) \geq \min \left\{ \frac{1}{2}\(t+\sum_{i=1}^k \ell_i\) - 1,f+1 \right\}.\ee
We must have $\sum_{i=1}^k \ell_i \geq n-1 \geq 2f+3$ since $\bigcup_{i=1}^k \cS_i = [n-1]$.  As $t$ determines the parity of $\sum_{i=1}^k \ell_i$, one checks that the right hand side of (\ref{f+1}) is $\geq f+1$, as required.
\end{proof}

We apply this lemma to the result of the following explicit computation.  It is a time-consuming but tractable problem for a computer to enumerate the crossing-connected set partitions $\Lambda \vdash[n]$ satisfying $d(\Lambda) - |\Cr(\Lambda)| \leq 8$ for $n\leq 19$.  Evseev's algorithm \cite{implement,implement2} fortunately succeeds in computing polynomial formulas $N_{\Lambda,e}(q)$ for all such set partitions $\Lambda$, and by inspecting these formulas we are able to deduce that 
$N_{\Lambda,e}(q) = 0$ whenever $\Lambda \vdash[n]$ is crossing-connected and $e \leq \frac{n-3}{2} \leq 8$.  Taking $f=8$ in the preceding lemma gives the following:

\begin{proposition}\label{instrumental}
Let $e\leq 8$ be a nonnegative integer. 
\begin{enumerate}
\item[(1)] If $n>2e+2$, then $N_{\Lambda,e}(q)=0$ for all crossing-connected set partitions $\Lambda \vdash[n]$.

\item[(2)] If $n>2e+1$ then $\wtN_{n,e}(q) = 0$.
\end{enumerate}
\end{proposition}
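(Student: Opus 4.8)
The plan is to derive part (1) directly from the preceding technical lemma, and then to obtain part (2) from part (1) via a combinatorial lower bound on the defect $d(\Gamma)-d(\st(\Gamma))$ of a crossing-connected component that spans the whole underlying interval.

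For part (1) I would apply the preceding lemma with $f=8$. Its hypothesis is that $N_{\Lambda,e}(q)=0$ for every crossing-connected $\Lambda\vdash[n]$ whenever $0\le e<\frac{n-3}{2}\le 8$; this is furnished by the explicit computation described just above the proposition, which in fact establishes the stronger statement with $e\le\frac{n-3}{2}\le 8$ (and is carried out over the full relevant range $n\le 19$). The conclusion of the lemma is then exactly the assertion of part (1): $N_{\Lambda,e}(q)=0$ for every crossing-connected $\Lambda\vdash[n]$ with $e\le 8$ and $n>2e+2$.

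For part (2) I would argue by contrapositive, assuming $\wtN_{n,e}(q)\ne 0$ and showing $n\le 2e+1$. By definition there is a set partition $\Lambda\vdash[n+1]$ with $N_{\Lambda,e}(q)\ne 0$ possessing a crossing-connected component $\Gamma_0$ carrying a chain of arcs $(i_1,j_1),\dots,(i_k,j_k)\in\Arc(\Gamma_0)$ with $i_1=1$, $j_k=n+1$, and $i_t<i_{t+1}<j_t<j_{t+1}$. By Theorem~\ref{products}, $N_{\Lambda,e}(q)\ne 0$ forces $N_{\Gamma_0,w}(q)\ne 0$ for some $w\le e$. Writing $\Gamma_0\vdash\cS_0$ with $1,n+1\in\cS_0$ and $\ell_0=|\cS_0|$, set $\Gamma_0'=\st(\Gamma_0)\vdash[\ell_0]$, which is again crossing-connected, and $f_{\Gamma_0}=d(\Gamma_0)-d(\Gamma_0')\ge 0$. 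The normalization (\ref{arb-def}) gives $N_{\Gamma_0',\,w-f_{\Gamma_0}}(q)=N_{\Gamma_0,w}(q)\ne 0$, so $e':=w-f_{\Gamma_0}$ lies in $\{0,\dots,8\}$, and part (1) applied to $\Gamma_0'$ yields $\ell_0\le 2e'+2=2(w-f_{\Gamma_0})+2$.

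The crux of the argument is the lower bound $f_{\Gamma_0}\ge(n+1)-\ell_0$. To obtain it, I would note that the chain intervals $[i_t,j_t]$ overlap consecutively (since $i_{t+1}<j_t$) and satisfy $i_1=1$, $j_k=n+1$, so their union is all of $[1,n+1]$; hence every vertex $m\in[n+1]\setminus\cS_0$ lies strictly inside some arc $(s_a,s_b)$ of $\Gamma_0$. Since standardizing an arc $(s_a,s_b)$ lowers its contribution to $d$ by exactly the number of elements of $[n+1]\setminus\cS_0$ in the open interval $(s_a,s_b)$, the quantity $f_{\Gamma_0}$ counts these ``missing'' vertices with multiplicity over all arcs of $\Gamma_0$; as each of the $(n+1)-\ell_0$ missing vertices is counted at least once, the bound follows. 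Combining $f_{\Gamma_0}\ge(n+1)-\ell_0$ with $\ell_0\le 2(w-f_{\Gamma_0})+2$ gives $\ell_0\ge 2(n-w)$ (substituting the lower bound for $f_{\Gamma_0}$) and $\ell_0\le 2w+2$ (since $f_{\Gamma_0}\ge 0$); hence $2(n-w)\le 2w+2$, i.e.\ $n\le 2w+1\le 2e+1$, as required. The main obstacle is establishing $f_{\Gamma_0}\ge(n+1)-\ell_0$ cleanly, which hinges on the covering property of the spanning chain of arcs; the remaining inequalities are routine bookkeeping.
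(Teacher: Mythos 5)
Your proof is correct and follows essentially the same route as the paper: part (1) is the preceding lemma with $f=8$ fed by the explicit computation, and part (2) combines part (1), the normalization (\ref{arb-def}), and the key inequality $d(\Gamma)-d(\st(\Gamma))\geq (n+1)-\ell_0$ for a component spanning $1$ and $n+1$ (you phrase this as a contrapositive, the paper argues directly, but the bookkeeping is equivalent). The only substantive difference is that you supply the covering argument justifying that key inequality, which the paper merely asserts.
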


\begin{remark} 
One can presumably extend this result by repeating our calculations with a larger integer in place of eight.  It seems reasonable, in fact, to conjecture that the proposition holds for all nonnegative integers $e$.
\end{remark}

\begin{proof}
Part (1) is immediate.  Let $n>2e+1$ and suppose $\Lambda\vdash[n+1]$ has a crossing-connected component $\Gamma$ involving both 1 and $n+1$. To prove (2), it suffices by Theorem \ref{products} to show that  $N_{\Gamma,f}(q)=0$ for all $f \leq e$.  To this end,  suppose the standardization $\st(\Gamma)$ partitions the set $[k]\subseteq [n+1]$.  Then  $d(\Gamma) - d(\st(\Gamma)) \geq n+1-k$ and it follows from (1) and the definition (\ref{arb-def}) that $N_{\Gamma,f}(q) =0$ if $k > 2\(f-(n+1-k)\) + 2$ or equivalently if $n + (n+1-k) > 2f+1$.  This inequality holds for all $f\leq e$ since $ n+1-k\geq 0$ and $n>2e+1$.
\end{proof}

Similarly, it is a feasible computer calculation to enumerate all set partitions $\Lambda \vdash[n+1]$  for $n\leq 17$ which satisfy  $d(\Lambda) - |\Cr(\Lambda)| \leq 8$ and have a crossing-connected component involving both 1 and $n+1$.  Evseev's algorithm \cite{implement,implement2} succeeds in computing polynomial formulas $N_{\Lambda,e}(q)$ for all such $\Lambda$, and establishes in addition that all irreducible characters of the crossing algebra groups $1+\wt\fkcr_\Lambda(q)$ are Kirillov functions.  
This computation determines the nonzero polynomials $\wtN_{n,e}(q)$ with $e\leq 8$; we list these in Appendix \ref{app-a}.  When written as functions of $q-1$, these polynomials turn out to have nonnegative integer coefficients, and so by Theorem \ref{tilde} we may conclude that the same is true of $N_{n,e}(q)$ for all integers $n\geq 1$ and $e\leq 8$.

We use this data to prove Theorem \ref{intro} in the following way.
  Suppose $\tbf d$ is a composition of a positive integer $e$ with $\ell$ parts.  There are exactly $\binom{k + \ell}{\ell}$ ways of adding $k$ zeros to $\tbf d$ to form a weak composition of $e$ with $k+\ell$ parts, since these extensions are in bijection with the weak compositions of $k$ with $\ell+1$ parts.  Given a composition $\tbf c$, write $|\tbf c|$ to  denote the sum of its parts.  
%
%
Since $\wtN_{n,0}(q)=0$ if $n>1$ and $\wtN_{1,0}(q)=q$,  it follows that for integers $e,n\geq 1$, we may rewrite the formula for $N_{n,e}(q)$  in Theorem \ref{tilde}  as
\be\label{sum}
N_{n+1,e}(q) 
=
\sum_{(\tbf c, \tbf d)} \binom{n-|\tbf c| + \ell(\tbf c)}{ \ell(\tbf c)} q^{n-|\tbf c|} \prod_{i=1}^{\ell( \tbf c )}\wtN_{\tbf c_i,\tbf d_i}(q)\ee
where the sum is over all pairs of compositions $(\tbf c, \tbf d)$ such that $|\tbf c| \leq n$ and $|\tbf d| = e$ and $\ell(\tbf c) = \ell(\tbf d)$.
Suppose $e \in \{1,\dots,8\}$.  Since $\wtN_{n,e}(q)=0$ when $n$ is sufficiently large, there are only finitely many pairs $(\tbf c, \tbf d)$ indexing nonzero terms in the sum (\ref{sum}).  Since we have polynomial formulas for the functions $\wtN_{\tbf c_i, \tbf d_i}(q)$, we can thus determine bivariate polynomials in $n,q$ giving each nonzero summand in (\ref{sum}).  Summing these functions then gives a formula for $N_{n,e}(q)$ that is valid when $n$ is large enough, and which happens to have the form described in  Theorem \ref{intro}.

This discussion affords a proof of the following theorem, whose statement combines Theorems \ref{intro} and \ref{intro2} from the introduction.

%
%

\begin{theorem} \label{summary} Fix a prime power $q>1$, a positive integer $n$, and an integer $e \in \{1,\dots,8\}$.

\begin{enumerate}

\item[(1)] $N_{n,e}(q)$ is a polynomial in $q-1$ with nonnegative integer coefficients.

\item[(2)] There are polynomials $f_{e,i}(x)$ with nonnegative integer coefficients such that if $n>2e$ then
\[ N_{n,e}(q) = q^{n-e-2}  \sum_{i=1}^{2e}\frac{c_{e,i}!}{e!} \cdot f_{e,i}(n-2e-1)\cdot  (q-1)^i,\quad\text{where $c_{e,i} = \tfrac{1}{2} + \left| \tfrac{1}{2} +e -i\right|$.}\]
 
 \item[(3)] Every irreducible character of $\UT_n(q)$ with degree $\leq q^8$ is a Kirillov function.
 
 \end{enumerate}
 \end{theorem}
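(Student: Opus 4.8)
The plan is to deduce all three parts from the intermediate formula (\ref{sum}) for $N_{n,e}(q)$ together with two pieces of computational input already assembled in this section: the explicit polynomials $\wtN_{n,e}(q)$ for $e\le 8$ tabulated in Appendix \ref{app-a}, which have nonnegative integer coefficients as polynomials in $q-1$ and for which every irreducible character of $1+\wt\fkcr_\Lambda(q)$ is a Kirillov function, and the finiteness statement in Proposition \ref{instrumental}. Part (1) is then a formal positivity argument. Fixing $n$ and $e\in\{1,\dots,8\}$, I would apply (\ref{sum}) with $n$ replaced by $n-1$ (the case $n=1$ being trivial, since $N_{1,e}(q)=0$ for $e\ge1$), and observe that each summand is a product of the nonnegative integer $\binom{n-1-|\tbf c|+\ell(\tbf c)}{\ell(\tbf c)}$, a power $q^{\,k}=\bigl((q-1)+1\bigr)^{k}$ that expands with nonnegative integer coefficients in $q-1$ for $k\ge0$, and the factors $\wtN_{\tbf c_i,\tbf d_i}(q)$ having the same property by the computation. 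As this class of polynomials is closed under products and sums, the claim follows.

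For part (2), the key point is that (\ref{sum}) stabilizes to a single bivariate polynomial precisely in the range $n>2e$. Because $\wtN_{\tbf c_i,\tbf d_i}(q)=0$ unless $\tbf c_i\le 2\tbf d_i+1$ by Proposition \ref{instrumental}(2), only pairs with $|\tbf c|-\ell(\tbf c)=\sum_i(\tbf c_i-1)\le 2e$ contribute, so the index set of nonzero terms is finite and independent of $n$. Interpreting the binomial coefficient $\binom{n-1-|\tbf c|+\ell(\tbf c)}{\ell(\tbf c)}$ as a polynomial in $n$ of degree $\ell(\tbf c)$, one checks that it vanishes at exactly the integers excluded by the constraint $|\tbf c|\le n-1$ so long as $|\tbf c|-\ell(\tbf c)\le n-1$; since every contributing pair satisfies $|\tbf c|-\ell(\tbf c)\le 2e$, this holds for all of them once $n>2e$, and the polynomial-in-$n$ expression then agrees with the true count, exhibiting $N_{n,e}(q)$ as a genuine polynomial in $n$ and $q$. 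Substituting the explicit $\wtN_{n,e}(q)$ from Appendix \ref{app-a}, carrying out the finite summation, and factoring out $q^{\,n-e-2}$ yields the stated expansion in powers of $q-1$; the remaining assertions — that the coefficients organize into the normalization $\tfrac{c_{e,i}!}{e!}f_{e,i}(n-2e-1)$ with each $f_{e,i}$ having nonnegative integer coefficients — are verified directly, case by case, for each $e\in\{1,\dots,8\}$.

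For part (3), let $\chi\in\Irr(\UT_n(q))$ have degree $q^e$ with $e\le 8$, and let $\Lambda\vdash[n]$ be the shape of the unique supercharacter $\chi_\lambda$ of which $\chi$ is a constituent. As in the proof of Theorem \ref{products}, Lemmas \ref{form} and \ref{obs-las} factor $\chi_\lambda$ as a tensor product over the crossing-connected components of $\Lambda$ — the inner-product hypothesis of Lemma \ref{obs-las} holds because $\Cr(\Lambda)$ is the disjoint union of the $\Cr(\Gamma)$ — so $\chi$ itself is a tensor product $\bigotimes_\Gamma\chi_\Gamma$ of constituents of supercharacters with crossing-connected shapes $\Gamma$, whose degrees $q^{e_\Gamma}$ satisfy $\sum_\Gamma e_\Gamma=e\le 8$. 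Each such $\Gamma$ has $N_{\Gamma,e_\Gamma}(q)\neq0$, so Proposition \ref{instrumental}(1) forces $\Gamma$ to partition a set of size at most $2e_\Gamma+2\le 18$, and the Observation that $N_{\Gamma,e_\Gamma}(q)=0$ whenever $d(\Gamma)-|\Cr(\Gamma)|>e_\Gamma$ gives $d(\Gamma)-|\Cr(\Gamma)|\le 8$; these are exactly the shapes for which the computation certifies that every irreducible character of $1+\wt\fkcr_\Gamma(q)$ is a Kirillov function. Theorem \ref{ref} then makes every irreducible constituent of a supercharacter with shape $\Gamma$ a Kirillov function, and a final application of Lemma \ref{obs-las} assembles these into the conclusion that $\chi$ is a Kirillov function.

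The genuine obstacle lies in part (2): while the stabilization of (\ref{sum}) into a polynomial for $n>2e$ is structural, the fact that this polynomial takes the rigidly patterned form of the theorem — with the factorial normalization $c_{e,i}!/e!$, the Narayana leading coefficients, and the nonnegativity of the $f_{e,i}$ — is not something I can derive a priori and must instead be confirmed by the explicit computation for each $e\le 8$; this is precisely why the analogous statement for general $e$ remains only Conjecture \ref{intro-conj}. A secondary, practical obstacle is that both the tabulated $\wtN_{n,e}(q)$ and the Kirillov certification underlying part (3) rest on large, time-limited runs of Evseev's algorithm, so the bound $e\le 8$ reflects the reach of the computation rather than any intrinsic barrier.
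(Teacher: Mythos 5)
Your proposal is correct and follows essentially the same route as the paper: parts (1) and (2) are obtained exactly as in the text from Theorem \ref{tilde}, the rewritten sum (\ref{sum}), Proposition \ref{instrumental}, and the tabulated $\wt N_{n,e}(q)$ (with the form of the $f_{e,i}$ necessarily checked by explicit computation), and part (3) rests on the same combination of Lemmas \ref{obs-las} and \ref{form}, Theorem \ref{ref}, and the computational certification of the Kirillov property for the finitely many relevant crossing algebras. The only (harmless) difference is that for part (3) you factor over crossing-connected components directly, whereas the paper's closing lemma factors over the outline intervals of $\Lambda$, whose pieces lie in $\bigcup_n \mathscr{S}_n$; both decompositions reduce to the same certified list of shapes.
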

 
 We list the polynomials $f_{e,i}(x)$ in Appendix \ref{app-b}.
The only thing not yet proved here is part (3), and this will follow from a short lemma.  
Define $\mathscr{S}_n$ for $n\geq 2$ as in the proof of Theorem \ref{tilde}:
\[\ba \mathscr{S}_1=\ & \text{the set of set partitions of $\{1,2\}$,}\\
\\[-10pt]
\mathscr{S}_n=\ &\text{the set of set partitions of $[n+1]$ which have a crossing-connected} \\
&\text{component which involves both 1 and $n+1$.}\ea
\]  Of course, $\wtN_{n,e}(q)$ is by definition the number of irreducible characters with degree $q^e$ which appear as constituents of supercharacters with shapes in $\mathscr{S}_n$.  Proposition \ref{instrumental} shows that for $e \leq 8$ there are only a finite number of set partitions $\Lambda \in \bigcup_n \mathscr{S}_n$ with $N_{\Lambda,e}(q)\neq 0$, and as remarked above, Evseev's algorithm establishes that for all such $\Lambda$, the irreducible characters of the algebra group $1+\wt\fkcr_\Lambda(q)$ are
 Kirillov functions.  Thus, the following result proves (3) in our theorem.

\begin{lemma}
Fix a prime power $q>1$ and an integer $e\geq 0$.  
Suppose whenever $f\leq e$ and $\Lambda \in \bigcup_n \mathscr{S}_n$ has $N_{\Lambda,f}(q) \neq 0$, all irreducible characters of the algebra group $1+\wt\fkcr_\Lambda(q)$ are Kirillov functions.
  Then the irreducible characters of $\UT_n(q)$ with degree $\leq q^e$ are Kirillov functions for all positive integers $n$.
\end{lemma}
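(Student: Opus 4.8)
The plan is to argue directly that every irreducible character of $\UT_n(q)$ of degree at most $q^e$ is a Kirillov function, reducing the question to the crossing-connected pieces where the hypothesis can be applied. Since every irreducible character of $\UT_n(q)$ is a constituent of a unique supercharacter, it suffices to fix a set partition $\Lambda \vdash [n]$ and prove the following: if $\chi_\Lambda$ admits \emph{any} irreducible constituent of degree $\leq q^e$, then \emph{all} of its irreducible constituents are Kirillov functions. The characters we must control are then among these constituents.

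First I would set up the tensor factorization over crossing-connected components. For each $\Gamma \in \crcomp(\Lambda)$ let $\wt\Gamma \vdash [n]$ denote $\Gamma$ padded with singleton parts, so that $\Arc(\wt\Gamma) = \Arc(\Gamma)$ and $\Cr(\wt\Gamma) = \Cr(\Gamma)$. Because $\Arc(\Lambda) = \bigsqcup_\Gamma \Arc(\Gamma)$ is a disjoint union, iterating Lemma \ref{form} gives $\chi_\Lambda = \bigotimes_\Gamma \chi_{\wt\Gamma}$; and because $\Cr(\Lambda) = \bigsqcup_\Gamma \Cr(\Gamma)$, the identity (\ref{fact}) yields $\langle \chi_\Lambda,\chi_\Lambda\rangle = q^{|\Cr(\Lambda)|} = \prod_\Gamma q^{|\Cr(\Gamma)|} = \prod_\Gamma \langle \chi_{\wt\Gamma},\chi_{\wt\Gamma}\rangle$. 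This is precisely the input required to apply Lemma \ref{obs-las}, iterated over the factors.

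Next I would show that each factor is entirely Kirillov. Under the standing assumption that $\chi_\Lambda$ has a constituent of degree $\leq q^e$, the $\Irr$-bijection of Lemma \ref{obs-las} writes that constituent as $\bigotimes_\Gamma \psi_\Gamma$ with $\psi_\Gamma$ an irreducible constituent of $\chi_{\wt\Gamma}$, and its degree is $\prod_\Gamma \deg \psi_\Gamma$; since this is $\leq q^e$ and $q>1$, each $\deg\psi_\Gamma \leq q^e$, so $N_{\wt\Gamma,f}(q) \neq 0$ for some $f \leq e$. If $\Gamma$ is a singleton then $\chi_{\wt\Gamma}$ is the trivial character and hence a Kirillov function, so suppose $\st(\Gamma) \vdash [k]$ with $k \geq 2$. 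By Lemma \ref{lem2} and the normalization (\ref{arb-def}), $N_{\st(\Gamma),f'}(q) \neq 0$ for some $f' \leq f \leq e$. Moreover $\st(\Gamma)$ is crossing-connected, and since a singleton part would be a proper subinterval disconnecting it, both $1$ and $k$ are arc-endpoints of its single crossing-connected component; hence $\st(\Gamma) \in \mathscr{S}_{k-1}$. As the reindexing $j \mapsto i_j$ induces an algebra isomorphism $\wt\fkcr_{\wt\Gamma}(q) \cong \wt\fkcr_{\st(\Gamma)}(q)$, the hypothesis applies to $\st(\Gamma)$ and shows that every irreducible character of $1+\wt\fkcr_{\wt\Gamma}(q)$ is a Kirillov function. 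Feeding this, at every degree, into the ``furthermore'' clause of Theorem \ref{ref} shows that \emph{all} irreducible constituents of $\chi_{\wt\Gamma}$ are Kirillov functions, i.e.\ $\Irr(\UT_n(q),\chi_{\wt\Gamma}) = \Kir(\UT_n(q),\chi_{\wt\Gamma})$.

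Finally I would conclude by iterating Lemma \ref{obs-las}: each factor $\chi_{\wt\Gamma}$ satisfies $\Irr = \Kir$, and the inner products multiply as verified above, so the same equality holds for $\chi_\Lambda$; in particular the degree-$\leq q^e$ constituent we started from is a Kirillov function. Letting $\Lambda$ and $n$ range over everything finishes the proof. The step I expect to be the main obstacle is the bookkeeping in the third paragraph: one must carefully check that a crossing-connected component, after standardizing and tracking the degree shift in (\ref{arb-def}), genuinely lands in some $\mathscr{S}_m$ at a degree $\leq e$ so that the hypothesis is truly applicable. The conceptual content is already packaged into Theorem \ref{ref} and Lemma \ref{obs-las}, so the remaining work is combinatorial verification rather than new analysis.
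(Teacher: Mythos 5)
Your proof is correct, and it runs on the same engine as the paper's own proof --- Lemma \ref{form} to factor the supercharacter, Lemma \ref{obs-las} to factor its constituents, and the ``furthermore'' clause of Theorem \ref{ref} to certify each factor --- but you factor over a different decomposition. The paper fixes the outline $(a_0,\dots,a_\ell)$ of $\Lambda$ and splits $\chi_\lambda$ into pieces $\chi_{\gamma_i}$ supported on the intervals $[a_{i-1},a_i]$; each such piece is, after translation, literally an element of $\mathscr{S}_{a_i-a_{i-1}}$ by the definition of the outline, so the hypothesis applies with no further bookkeeping. You instead split over the crossing-connected components, which is finer (one outline interval may contain several components), and must then verify that each standardized component $\st(\Gamma)\vdash[k]$ lies in $\mathscr{S}_{k-1}$ and that the shift $f_\Gamma=d(\Gamma)-d(\st(\Gamma))\geq 0$ in (\ref{arb-def}) keeps the relevant degree $\leq e$ --- exactly the bookkeeping you flag, and it does go through: a crossing-connected $\st(\Gamma)\vdash[k]$ with $k\geq 2$ has no singleton parts, so $1$ and $k$ are arc endpoints of its unique crossing-connected component. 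What your route buys is that the hypothesis is only ever invoked for crossing-connected set partitions, a proper subcollection of $\bigcup_n\mathscr{S}_n$, so you in fact establish a marginally stronger statement; what the paper's route buys is that it reuses the outline apparatus already built for Theorem \ref{tilde} and skips the standardization and degree-shift step entirely, since the pieces $\Gamma_i\vdash[n]$ have crossing algebras isomorphic on the nose to those of the $\Lambda(i)\in\mathscr{S}_{n_i}$. One cosmetic point: your $\chi_\Lambda$ should be read as $\chi_\lambda$ for a fixed quasi-monomial $\lambda$ of shape $\Lambda$, decomposed as $\lambda=\sum_\Gamma\gamma_\Gamma$ with each $\gamma_\Gamma$ supported on $\Arc(\Gamma)$; the argument is unaffected.
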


\begin{proof}
Assume our hypothesis and recall the notation used in the proof of Theorem \ref{tilde}.  Fix a set partition $\Lambda \vdash[n]$ with outline $(a_0,a_1,\dots,a_\ell)$. Suppose  $\lambda \in \fkt_n(q)^*$ is quasi-monomial with shape $\Lambda$.  To prove the lemma, it suffices to show that all irreducible constituents with degree $\leq q^e$ of $\chi_\lambda$ are Kirillov functions.

For each $i=1,\dots,\ell$, define $\Gamma_i$ as the set partition of $[n]$ formed by adding singleton parts to the set partition $(a_{i-1}-1) + \Lambda(i)$ of $[a_{i-1},a_i]$.  Likewise, let  $\gamma_i \in \fkt_n(q)^*$ for $i=1,\dots,\ell$ be the quasi-monomial map with shape $\Gamma_i$ given by 
\[ \gamma_i = \sum_{(j,k) \in\Arc(\Gamma_i)} \lambda_{jk} e_{jk}^* \in \fkt_n(q)^*.\] 
Then $\lambda = \sum_{i=1}^\ell \gamma_i$, and it follows from Lemmas \ref{obs-las} and \ref{form} that every irreducible constituent $\psi$ of the supercharacter $\chi_\lambda$ has a unique factorization as $\psi = \psi_1\otimes \psi_2\otimes \cdots \otimes \psi_\ell$ where $\psi_i$ is an irreducible constituent of $\chi_{\gamma_i}$.  
Suppose $\psi(1) \leq q^e$, so that each $\psi_i(1) \leq q^e$.  The crossing algebra of each $\Gamma_i$  is certainly isomorphic to that of $\Lambda(i)$, and so  by assumption the irreducible characters  $1+\wt\fkcr_{\Gamma_i}(q)$ are Kirillov functions.  Therefore by  Theorem \ref{ref} each $\psi_i$ is a Kirillov function, and it follows  by Lemma \ref{obs-las} that $\psi$ is a Kirillov function, as required.\end{proof}

\appendix

\section{The polynomials $\wt N_{n,e}(q)$ in Theorem \ref{tilde}}\label{app-a}

The nonzero polynomials $\wtN_{n,e}(q)$ for $e\leq 8$ are listed below in 
Tables \ref{tbl2} and \ref{tbl3}.  
These polynomials have at least one curious property worth taking the trouble to point out.
Let $A(n,k)$ and $B(n,k)$ define the following triangular arrays, given as sequences {A026374}  and  {A026386} in \cite{OEIS}:
\[ \ba A(n,k) &= \begin{cases} 1,&\text{if }k=0\text{ or }k=n, \\
A(n-1,k-1)+A(n-1,k),&\text{if $n$ is odd and $1\leq k \leq n-1$,} \\
A(n-1,k-1)+A(n-1,k)+A(n-2,k-1),&\text{if $n$ is even and $1\leq k \leq n-1$,}\\
0,&\text{otherwise},\end{cases}
\\ \\[-10pt]
B(n,k) &= \begin{cases} 1,&\text{if }k=0\text{ or }k=n, \\
B(n-1,k-1)+B(n-1,k)+B(n-2,k-1),&\text{if $n$ is odd and $1\leq k \leq n-1$,} \\
B(n-1,k-1)+B(n-1,k),&\text{if $n$ is even and $1\leq k \leq n-1$,}\\
0,&\text{otherwise}.\end{cases}
\ea\]
More tangibly, $A(n,k)$ is the number of integer sequences $(s_0,s_1,\dots,s_n)$ with $s_i-s_{i-1} \in \{-1,0,1\}$ such that $s_0=0$, $s_n = n-2k$, and $s_i$ is even if $i$ is even.  Likewise,  $B(n,k)$ is the number of integer sequences $(s_0,s_1,\dots,s_n)$ with $s_i-s_{i-1} \in \{-1,0,1\}$ such that $s_0=0$, $s_n = n-2k$, and $s_i$ is odd if $i$ is odd.
One checks from our tables that the following happens to hold.
   
\begin{observation} Let $e\in \{1,\dots,8\}$ and let $q>1$ be a prime power.  \begin{enumerate}
\item[(1)] If $n=2e+1$ then $\wtN_{n,e}(q) = \sum_{k=0}^{n-2} A(n-2,k) (q-1)^{n-e+k}$.
\item[(2)] If $n=2e$ then $\wtN_{n,e}(q) = \sum_{k=0}^{n-2} \(A(n-2,k)+(e-1) B(n-2,k)\) (q-1)^{n-e+k}$.
\end{enumerate}
\end{observation}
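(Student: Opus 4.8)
The statement concerns only the finitely many exponents $e \in \{1,\dots,8\}$, and for each such $e$ Proposition~\ref{instrumental} guarantees that $\wtN_{n,e}(q)$ vanishes once $n > 2e+1$. Thus $n = 2e+1$ and $n = 2e$ are precisely the two largest surviving values of $n$, and the nonzero polynomials $\wtN_{n,e}(q)$ for every relevant pair $(n,e)$ are tabulated explicitly in Tables~\ref{tbl2} and~\ref{tbl3}. The most direct proof is therefore a finite verification: for each $e \le 8$ I would compute the entries $A(2e-1,k)$ (for case (1)) and $A(2e-2,k),\,B(2e-2,k)$ (for case (2)) from their defining recurrences, assemble the claimed polynomials $\sum_k A(n-2,k)(q-1)^{n-e+k}$ and $\sum_k \left(A(n-2,k)+(e-1)B(n-2,k)\right)(q-1)^{n-e+k}$, and match them coefficient-by-coefficient against the tabulated $\wtN_{n,e}(q)$. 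Because the ranges of $k$ and $e$ are small, this is entirely mechanical and is what I would record as the rigorous proof for the present paper.

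A more satisfying route, which would also suggest why the identity ought to persist for all $e$, is to prove the two formulas bijectively. I would begin from the definition $\wtN_{n,e}(q) = \sum_\Lambda (q-1)^{n+1-\ell(\Lambda)} N_{\Lambda,e}(q)$, the sum ranging over $\Lambda \vdash [n+1]$ whose crossing-connected component containing $1$ also contains $n+1$. In the extremal regime $n \in \{2e, 2e+1\}$, the degree bound $m(\Lambda) \ge \lfloor (n-1)/2\rfloor$ for crossing-connected shapes (established in the lemma used to prove Proposition~\ref{instrumental}) forces $e$ to be the minimal degree that occurs, so each contributing $\Lambda$ must be as crossing-rich as possible; Theorem~\ref{max-cross} together with Proposition~\ref{eval} then pins down $N_{\Lambda,e}(q)$ for exactly these shapes. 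The plan is to encode each such shape, with a choice of $(q-1)$-power, as one of the parity-constrained $\{-1,0,1\}$-step sequences $(s_0,\dots,s_{n-2})$ that the text identifies as the combinatorial models for $A(n-2,k)$ and $B(n-2,k)$, matching the terminal value $s_{n-2} = (n-2)-2k$ to the exponent $n-e+k$ of $(q-1)$.

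The main obstacle is precisely this matching: one must show that the exponent $n+1-\ell(\Lambda)$, which equals $|\Arc(\Lambda)|$, together with the internal multiplicity structure of $N_{\Lambda,e}(q)$, distributes over the contributing shapes so as to reproduce the arrays $A$ and $B$ with their correct parity conditions ($s_i$ even for even $i$ in one model, odd for odd $i$ in the other) and the extra multiplicity $(e-1)$ attached to $B(n-2,k)$ in case (2). I expect that $(e-1)$ factor to come from a secondary family of shapes --- those whose component through $\{1,n+1\}$ carries one odd maximal crossing and hence contributes $q$-fold rather than multiplicity-free constituents --- so isolating that family and confirming that its weighted count is exactly $(e-1)\sum_k B(n-2,k)(q-1)^{n-e+k}$ would be the crux of a uniform argument. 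For the finitely many values at hand, however, the coefficient check against Tables~\ref{tbl2} and~\ref{tbl3} suffices.
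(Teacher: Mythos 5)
Your primary route — a finite, coefficient-by-coefficient check of the claimed identities against the tabulated polynomials $\wtN_{n,e}(q)$ in Tables \ref{tbl2} and \ref{tbl3} — is exactly what the paper does, which offers no argument beyond ``One checks from our tables.'' The bijective strategy you sketch afterward is speculative extra material the paper does not attempt, but since you correctly fall back on the mechanical verification as the actual proof, your proposal matches the paper's approach.
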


\begin{table}[H]
\[ \barr{|c|c|l|} \hline n & e & [\ a_0,\ a_1,\ \dots,\ a_\ell\ ]\text{ where }\wtN_{n,e}(q) = \sum_{i=0}^\ell a_i (q-1)^i \\
\hline 
1&0 & [\ 1,\ 1\ ]\\
\hline 
2&1 & [\ 0,\ 1\ ]\\
\hline 
3&1 & [\ 0,\ 0,\ 1,\ 1\ ]\\
3&2 & [\ 0,\ 1,\ 1\ ]\\
\hline 
4&2 & [\ 0,\ 0,\ 2,\ 5,\ 2\ ]\\
4&3 & [\ 0,\ 1,\ 3,\ 2\ ]\\
4&4 & [\ 0,\ 0,\ 1\ ]\\
\hline 
5&2 & [\ 0,\ 0,\ 0,\ 1,\ 4,\ 4,\ 1\ ]\\
5&3 & [\ 0,\ 0,\ 3,\ 12,\ 13,\ 4\ ]\\
5&4 & [\ 0,\ 1,\ 5,\ 13,\ 10,\ 2\ ]\\
5&5 & [\ 0,\ 0,\ 3,\ 5,\ 2\ ]\\
5&6 & [\ 0,\ 0,\ 1,\ 1\ ]\\
\hline 
6&3 & [\ 0,\ 0,\ 0,\ 3,\ 16,\ 27,\ 16,\ 3\ ]\\
6&4 & [\ 0,\ 0,\ 4,\ 23,\ 50,\ 46,\ 17,\ 2\ ]\\
6&5 & [\ 0,\ 1,\ 7,\ 31,\ 59,\ 44,\ 13,\ 1\ ]\\
6&6 & [\ 0,\ 0,\ 5,\ 26,\ 40,\ 23,\ 4\ ]\\
6&7 & [\ 0,\ 0,\ 3,\ 12,\ 15,\ 5\ ]\\
6&8 & [\ 0,\ 0,\ 1,\ 4,\ 3\ ]\\
\hline 
7&3 & [\ 0,\ 0,\ 0,\ 0,\ 1,\ 7,\ 17,\ 17,\ 7,\ 1\ ]\\
7&4 & [\ 0,\ 0,\ 0,\ 6,\ 41,\ 100,\ 106,\ 49,\ 8\ ]\\
7&5 & [\ 0,\ 0,\ 5,\ 38,\ 129,\ 232,\ 211,\ 97,\ 22,\ 2\ ]\\
7&6 & [\ 0,\ 1,\ 9,\ 57,\ 187,\ 288,\ 221,\ 85,\ 15,\ 1\ ]\\
7&7 & [\ 0,\ 0,\ 7,\ 58,\ 182,\ 257,\ 174,\ 54,\ 6\ ]\\
7&8 & [\ 0,\ 0,\ 5,\ 46,\ 133,\ 162,\ 88,\ 19,\ 1\ ]\\
\hline
8&4 & [\ 0,\ 0,\ 0,\ 0,\ 4,\ 33,\ 102,\ 147,\ 102,\ 33,\ 4\ ]\\
8&5 & [\ 0,\ 0,\ 0,\ 10,\ 85,\ 289,\ 503,\ 473,\ 242,\ 68,\ 11,\ 1\ ]\\
8&6 & [\ 0,\ 0,\ 6,\ 57,\ 267,\ 720,\ 1083,\ 894,\ 401,\ 91,\ 8\ ]\\
8&7 & [\ 0,\ 1,\ 11,\ 91,\ 429,\ 1102,\ 1575,\ 1296,\ 626,\ 177,\ 28,\ 2\ ]\\
8&8 & [\ 0,\ 0,\ 9,\ 102,\ 500,\ 1218,\ 1601,\ 1178,\ 483,\ 103,\ 9\ ]\\
\hline 
9&4 & [\ 0,\ 0,\ 0,\ 0,\ 0,\ 1,\ 10,\ 39,\ 75,\ 75,\ 39,\ 10,\ 1\ ]\\
9&5 & [\ 0,\ 0,\ 0,\ 0,\ 10,\ 97,\ 372,\ 720,\ 750,\ 420,\ 118,\ 13\ ]\\
9&6 & [\ 0,\ 0,\ 0,\ 15,\ 154,\ 687,\ 1724,\ 2592,\ 2342,\ 1270,\ 409,\ 74,\ 6\ ]\\
9&7 & [\ 0,\ 0,\ 7,\ 80,\ 482,\ 1775,\ 3957,\ 5320,\ 4385,\ 2262,\ 749,\ 166,\ 25,\ 2\ ]\\
9&8 & [\ 0,\ 1,\ 13,\ 133,\ 822,\ 2968,\ 6374,\ 8317,\ 6747,\ 3449,\ 1110,\ 219,\ 24,\ 1\ ]\\ \hline 
\earr\]
\caption{Polynomials in $q-1$ giving $\wtN_{n,e}(q)$ for $0\leq e \leq 8$ and $1\leq n \leq 9$}
\label{tbl2}
\end{table}

\normalsize

\small

\def\skipline{\\ &&\ \ }

\begin{table}[H]
\[ \barr{|c|c|l|} \hline n & e & [\ a_0,\ a_1,\ \dots,\ a_\ell\ ]\text{ where }\wtN_{n,e}(q) = \sum_{i=0}^\ell a_i (q-1)^i 
\\ \hline 
10&5 & [\ 0,\ 0,\ 0,\ 0,\ 0,\ 5,\ 56,\ 254,\ 600,\ 795,\ 600,\ 254,\ 56,\ 5\ ]\\
10&6 & [\ 0,\ 0,\ 0,\ 0,\ 20,\ 226,\ 1066,\ 2735,\ 4171,\ 3895,\ 2245,\ 803,\ 181,\ 26,\ 2\ ]\\
10&7 & [\ 0,\ 0,\ 0,\ 21,\ 254,\ 1418,\ 4708,\ 9860,\ 13084,\ 10947,\ 5719,\ 1809,\ 318,\ 24\ ]\\
10&8 & [\ 0,\ 0,\ 8,\ 107,\ 792,\ 3740,\ 11303,\ 22002,\ 27905,\ 23448,\ 13309,\ 5212,\ 1435,\ 276,\ 34,\ 2\ ]\\
\hline 
11&5 & [\ 0,\ 0,\ 0,\ 0,\ 0,\ 0,\ 1,\ 13,\ 70,\ 202,\ 339,\ 339,\ 202,\ 70,\ 13,\ 1\ ]\\
11&6 & [\ 0,\ 0,\ 0,\ 0,\ 0,\ 15,\ 189,\ 994,\ 2836,\ 4791,\ 4935,\ 3096,\ 1146,\ 229,\ 19\ ]\\
11&7 & [\ 0,\ 0,\ 0,\ 0,\ 35,\ 455,\ 2587,\ 8456,\ 17477,\ 23637,\ 21098,\ 12434,\ 4810,\ 1193,\ 176,\ 12\ ]\\
11&8 & [\ 0,\ 0,\ 0,\ 28,\ 391,\ 2638,\ 11013,\ 30197,\ 55114,\ 67310,\ 55436,\ 31170,\ 12234,\ 3506,\ \skipline 784,\ 139,\ 17,\ 1\ ]\\
\hline 
12&6 & [\ 0,\ 0,\ 0,\ 0,\ 0,\ 0,\ 6,\ 85,\ 510,\ 1690,\ 3390,\ 4263,\ 3390,\ 1690,\ 510,\ 85,\ 6\ ]\\
12&7 & [\ 0,\ 0,\ 0,\ 0,\ 0,\ 35,\ 495,\ 3014,\ 10370,\ 22269,\ 31172,\ 28949,\ 17934,\ 7421,\ 2057,\ \skipline 384,\ 47,\ 3\ ]\\
12&8 & [\ 0,\ 0,\ 0,\ 0,\ 56,\ 827,\ 5551,\ 22287,\ 58785,\ 105295,\ 129141,\ 108335,\ 61744,\ 23499,\ \skipline 5736,\ 817,\ 52\ ]\\
[-10pt] &&\\\hline &&\\[-10pt]
13&6 & [\ 0,\ 0,\ 0,\ 0,\ 0,\ 0,\ 0,\ 1,\ 16,\ 110,\ 425,\ 1015,\ 1558,\ 1558,\ 1015,\ 425,\ 110,\ 16,\ 1\ ]\\
13&7 & [\ 0,\ 0,\ 0,\ 0,\ 0,\ 0,\ 21,\ 326,\ 2185,\ 8290,\ 19645,\ 30338,\ 31023,\ 20990,\ 9235,\ 2530,\ \skipline 391,\ 26\ ]\\
13&8 & [\ 0,\ 0,\ 0,\ 0,\ 0,\ 70,\ 1105,\ 7730,\ 31635,\ 84111,\ 152436,\ 192368,\ 170352,\ 105998,\ \skipline 46182,\ 13917,\ 2805,\ 346,\ 20\ ]\\
\hline
14&7 & [\ 0,\ 0,\ 0,\ 0,\ 0,\ 0,\ 0,\ 7,\ 120,\ 897,\ 3840,\ 10410,\ 18696,\ 22685,\ 18696,\ 10410,\ \skipline 3840,\ 897,\ 120,\ 7\ ]\\
14&8 & [\ 0,\ 0,\ 0,\ 0,\ 0,\ 0,\ 56,\ 953,\ 7142,\ 31066,\ 87093,\ 165429,\ 218240,\ 202198,\ \skipline 132015,\ 60703,\ 19620,\ 4452,\ 705,\ 74,\ 4\ ]\\
\hline
15&7 & [\ 0,\ 0,\ 0,\ 0,\ 0,\ 0,\ 0,\ 0,\ 1,\ 19,\ 159,\ 771,\ 2400,\ 5028,\ 7247,\ 7247,\ 5028,\ 2400,\ \skipline 771,\ 159,\ 19,\ 1\ ]\\
15&8 & [\ 0,\ 0,\ 0,\ 0,\ 0,\ 0,\ 0,\ 28,\ 517,\ 4218,\ 20034,\ 61485,\ 128109,\ 185636,\ 188894,\ \skipline 134907,\ 66915,\ 22488,\ 4872,\ 613,\ 34\ ]\\
\hline 
16&8 & [\ 0,\ 0,\ 0,\ 0,\ 0,\ 0,\ 0,\ 0,\ 8,\ 161,\ 1442,\ 7581,\ 25998,\ 61194,\ 101458,\ 119941,\ \skipline 101458,\ 61194,\ 25998,\ 7581,\ 1442,\ 161,\ 8\ ]\\
\hline 
17&8 & [\ 0,\ 0,\ 0,\ 0,\ 0,\ 0,\ 0,\ 0,\ 0,\ 1,\ 22,\ 217,\ 1267,\ 4872,\ 12999,\ 24731,\ 34016,\ 34016,\ \skipline 24731,\ 12999,\ 4872,\ 1267,\ 217,\ 22,\ 1\ ]\\
\hline
\earr\]
\normalsize
\caption{Polynomials in $q-1$ giving $\wtN_{n,e}(q)$ for $0\leq e \leq 8$ and $10\leq n \leq 17$}
\label{tbl3}
\end{table}

\normalsize

\newpage
\section{The polynomials $f_{e,i}(x)$ in Theorem \ref{intro}}\label{app-b}

The polynomials $f_{e,i}(x)$ appearing in Theorems \ref{intro} and \ref{summary} are listed  below in Tables \ref{tbl4} and \ref{tbl5}.  
As described in those results, these polynomials determine $N_{n,e}(q)$ but only when $n>2e$.  One can compute $N_{n,e}(q)$ when $n\leq 2e \leq 16$, however, by invoking Theorem \ref{tilde} with the data in the previous section.  Polynomials in $q$ giving $N_{n,e}(q)$ already appear in \cite{I} for $n\leq 9$ and in \cite{E} for $n\leq 13$.  For completeness, we give the remaining computable cases here:
\[\ba  N_{14,7}(q) &=  6q^{18} + 70q^{17} + 180q^{16} + 227q^{15} - 843q^{14} - 1893q^{13} + 2734q^{12} + 2451q^{11} 
\\&\quad - 4015q^{10} - 45q^{9} + 1792q^8 - 722q^7 + 48q^6 + 10q^5, \\
\\[-10pt]
N_{14,8}(q)& = 3q^{19} + 28q^{18} + 132q^{17} + 387q^{16} - 122q^{15} - 1974q^{14} - 1490q^{13} + 5970q^{12} 
\\&\quad+ 691q^{11} - 6739q^{10} + 1942q^9 + 2596q^8 - 1705q^7 + 276q^6 + 6q^5 - q^4, \\
\\[-10pt]
N_{15,8}(q)& = 8q^{20} + 44q^{19} + 309q^{18} + 475q^{17} - 228q^{16} - 3705q^{15} - 1877q^{14} + 11423q^{13}
\\&\quad - 478q^{12} - 13050q^{11} + 5754q^{10} + 4290q^9 - 3756q^8 + 808q^7 - 12q^6 - 5q^5, \\
\\[-10pt]
N_{16,8}(q)&= 13q^{21} + 106q^{20} + 451q^{19} + 846q^{18} - 718q^{17} - 6378q^{16} - 2156q^{15} + 20656q^{14}
\\&\quad - 3521q^{13} - 23888q^{12} + 13914q^{11} + 6304q^{10} - 7517q^9 + 1984q^8 - 81q^7 - 15q^6.\ea\]
When written as polynomials in $q-1$, these have nonnegative integer coefficients (but take up significantly more space).

As mentioned in the introduction, for each $e \in \{1,\dots,8\}$ the polynomials $f_{e,i}(x)$ for $i=1,\dots,2e$ have degrees $1,2,\dots,e,e,\dots,2,1$ and leading coefficients    \[N(e,1),\ N(e,2),\ \dots,\ N(e,e),\ N(e,e),\ \dots, N(e,2),\ N(e,1)\] where $N(m,k) =\frac{1}{k} \binom{m-1}{k-1} \binom{m}{k-1}$ denotes the Narayana numbers. Another interesting feature of these polynomials is that $f_{e,1}(x) = x+e$ for $e=1,\dots,8$.  
The formula for $N_{n,e}(q)$ in Theorem \ref{intro} thus shows that if $n>2e$ and $e\leq8$ then the value of the derivative $\frac{d}{dq} N_{n,e}(q)$ at $q=1$ is $\max(n-e-1,0)$.  Isaacs \cite{I} made the same observation in the cases when $n\leq 9$ and $e\geq 0$ is arbitrary, and in fact,  if Lehrer's conjecture holds then this phenomenon always occurs as a result of the following:

\begin{proposition}
If $n>0$ and $e\geq 0$ are integers such that $N_{n,e}(q)$ is a polynomial in $q$, then differentiating $N_{n,e}(q)$ with respect to $q$ and setting $q=1$ gives 
\[ \frac{d}{dq} N_{n,e}(q) \Big\vert_{q=1} =\begin{cases} n-e-1,&\text{if }0\leq e<n, \\ 0,&\text{otherwise}.\end{cases}\]
\end{proposition}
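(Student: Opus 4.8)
The plan is to read the answer off the expansion (\ref{imply}) of $N_{n,e}(q)$ over set partitions, tracking only those terms that survive differentiation at $q=1$. Write $m_\Lambda = n-\ell(\Lambda) = |\Arc(\Lambda)|$ for the number of arcs of $\Lambda\vdash[n]$ (the equality of these quantities is noted in the proof of Theorem \ref{tilde}), and group (\ref{imply}) as $N_{n,e}(q) = \sum_{k\ge 0}(q-1)^k \sum_{\Lambda:\,m_\Lambda=k} N_{\Lambda,e}(q)$. The two lowest groups are completely explicit. When $m_\Lambda=0$ the only set partition is the all-singletons partition, whose supercharacter is trivial, contributing the constant $\delta_{e,0}$. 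When $m_\Lambda=1$, $\Lambda$ consists of a single arc $(i,j)$ together with singletons, so $\Cr(\Lambda)=\varnothing$ and the supercharacter is irreducible of degree $q^{d(\Lambda)}=q^{\,j-i-1}$ by (\ref{fact}), whence $N_{\Lambda,e}(q)=\delta_{e,\,j-i-1}$. Summing over the pairs $i<j$ gives the constant $\sum_{\Lambda:\,m_\Lambda=1}N_{\Lambda,e}(q) = \#\{(i,j):1\le i<j\le n,\ j-i-1=e\} = \max(n-e-1,0)$, which equals $n-e-1$ for $0\le e<n$ and $0$ otherwise.

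Next I would set $F(q) = N_{n,e}(q) - \delta_{e,0} - \max(n-e-1,0)\,(q-1)$. By the hypothesis $N_{n,e}(q)$ is a polynomial, so $F\in\QQ[q]$ (a polynomial determined by integer values at infinitely many prime powers has rational coefficients, by interpolation at $\deg F + 1$ of them). The computation above shows that $F$ collects exactly the terms of (\ref{imply}) with $m_\Lambda\ge 2$: for every prime power $q$ one has the numerical identity $F(q)=(q-1)^2 G(q)$, where $G(q)=\sum_{\Lambda:\,m_\Lambda\ge 2}(q-1)^{m_\Lambda-2}N_{\Lambda,e}(q)$ is a nonnegative integer. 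It therefore suffices to prove that $(q-1)^2$ divides $F$ in $\QQ[q]$, for then $F(1)=F'(1)=0$, and differentiating $N_{n,e}(q)=\delta_{e,0}+\max(n-e-1,0)(q-1)+F(q)$ at $q=1$ yields exactly $\max(n-e-1,0)$, the asserted value.

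The main obstacle is that the individual functions $N_{\Lambda,e}(q)$ are not known to be polynomials (this is precisely Conjecture \ref{ours}), so one cannot simply expand the $m_\Lambda\ge 2$ terms in powers of $(q-1)$ and observe that they begin in degree $2$; the polynomial divisibility must instead be extracted from the numerical divisibility at prime powers. I would perform the division $F(q) = (q-1)^2 Q(q) + a(q-1)+b$ in $\QQ[q]$ and suppose, toward a contradiction, that $(a,b)\neq(0,0)$. Choosing a positive integer $N$ with $NQ\in\ZZ[q]$, the quantity $N\bigl(G(q)-Q(q)\bigr)=N\bigl(a(q-1)+b\bigr)/(q-1)^2$ is an integer for every prime power $q$ (since $G(q)$ is an integer and $NQ(q)$ is an integer at integer arguments), yet its value tends to $0$ as $q\to\infty$. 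An integer-valued sequence tending to $0$ is eventually $0$, so $a(q-1)+b$ vanishes at infinitely many $q$, forcing $a=b=0$ and contradicting our assumption. Hence $(q-1)^2\mid F$ in $\QQ[q]$, which completes the argument.
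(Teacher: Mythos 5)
Your argument is correct and follows essentially the same route as the paper: the paper deduces this proposition from the congruence $N_{n,e}(q)\equiv \delta_{e,0}+\max(0,n-e-1)(q-1) \bmod (q-1)^2$, which it establishes exactly as you do by expanding (\ref{imply}) and evaluating the contributions of the set partitions with at most one arc via Proposition \ref{eval}. The only difference is that you make explicit the passage from this numerical divisibility at prime powers to divisibility of the polynomial $F$ by $(q-1)^2$ in $\QQ[q]$, a step the paper leaves implicit.
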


As noted by an anonymous referee, this statement is a consequence of the following fact.  Here $\delta_{e,0}$ is the Kronecker delta, equal to 1 if $e=0$ and 0 otherwise.

\begin{proposition}   
$N_{n,e}(q) \equiv \delta_{e,0}+ \max(0,n-e-1)(q-1)\mod (q-1)^2$ for all $n>0$ and $e\geq 0$.
\end{proposition}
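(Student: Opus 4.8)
The plan is to read off the congruence directly from the expansion (\ref{imply}),
\[ N_{n,e}(q) = \sum_{\Lambda\vdash[n]} (q-1)^{n-\ell(\Lambda)} N_{\Lambda,e}(q),\]
by isolating the terms that survive modulo $(q-1)^2$. Since every summand carries the explicit polynomial factor $(q-1)^{n-\ell(\Lambda)}$, only the set partitions $\Lambda$ with $n-\ell(\Lambda)\leq 1$ can affect the residue mod $(q-1)^2$; all remaining terms are $(q-1)^2$ times a (possibly non-polynomial) function of $q$, hence vanish in the congruence regardless of whether $N_{\Lambda,e}(q)$ is itself a polynomial. This is exactly what lets the statement hold for all $n$ and $e$ with no polynomiality hypothesis, and it dispatches the only delicate point, namely the meaning of a congruence for functions defined only on prime powers: we are really asserting that $N_{n,e}(q)-\delta_{e,0}-\max(0,n-e-1)(q-1)$ equals $(q-1)^2$ times a function of $q$, which the expansion exhibits outright.

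Next I would handle the two surviving families using Proposition \ref{eval}(1). The unique $\Lambda\vdash[n]$ with $\ell(\Lambda)=n$ is the all-singletons partition, which has $\Arc(\Lambda)=\varnothing$, hence $|\Cr(\Lambda)|=0$ and $d(\Lambda)=0$; its contribution is $(q-1)^0 N_{\Lambda,e}(q)=\delta_{e,0}$. The partitions with $\ell(\Lambda)=n-1$ are precisely those having a single nonsingleton part $\{i,j\}$, i.e.\ a single arc $(i,j)$ with $1\leq i<j\leq n$; each of these has $|\Cr(\Lambda)|=0$ and $d(\Lambda)=j-i-1$, so Proposition \ref{eval}(1) gives $N_{\Lambda,e}(q)=\delta_{e,\,j-i-1}$, a constant, contributing $(q-1)\,\delta_{e,\,j-i-1}$.

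Finally I would collect the coefficient of $(q-1)$ by a short arc count. The combined contribution of the single-arc partitions is $(q-1)$ times the number of pairs $(i,j)$ with $1\leq i<j\leq n$ and $j-i-1=e$; writing $j=i+e+1$, these are indexed by $i$ with $1\leq i\leq n-e-1$, of which there are $\max(0,n-e-1)$. Adding the all-singletons contribution yields
\[ N_{n,e}(q) \equiv \delta_{e,0}+\max(0,n-e-1)(q-1) \pmod{(q-1)^2}.\]
I expect no real obstacle here beyond the interpretive remark made in the first paragraph; the substance of the argument is simply that Proposition \ref{eval}(1) pins down $N_{\Lambda,e}(q)$ exactly for the arcless and single-arc shapes, which are the only shapes visible modulo $(q-1)^2$.
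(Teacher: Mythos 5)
Your proof is correct and follows essentially the same route as the paper: expand $N_{n,e}(q)$ via (\ref{imply}), discard all terms with $n-\ell(\Lambda)\geq 2$, and evaluate the arcless and single-arc shapes with Proposition \ref{eval}(1). The only (harmless) difference is that the paper treats $e=0$ separately via the closed form $N_{n,0}(q)=q^{n-1}$, whereas you handle it uniformly through the same partition count.
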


\begin{proof}
This is true for $e=0$ since $N_{n,e}(q) = q^{n-1}$.  If $e>0$, then (\ref{imply}) and Proposition \ref{eval}, show that $N_{n,e}(q) \equiv f(n,e)(q-1)\mod (q-1)^2$ where $f(n,e)$ is the number of  $\Lambda\vdash [n]$ with $n-1$ parts and $d(\Lambda) = e$.  To compute $f(n,e)$, observe that $\Lambda\vdash[n]$ has $n-1$ parts if and only if $\Lambda$ has exactly one arc $(i,j)$, in which case $d(\Lambda) = j-i-1$. 
\end{proof}

\begin{table}[H]
\[
 \begin{array}{|c|c|l|} \hline
 e & i &
   [\ a_0,\ a_1,\ \dots,\ a_\ell\ ]\text{ where }f_{e,i}(x) = \sum_{i=0}^\ell a_i x^i 
\\ \hline
1 & 1 & [\ 1,\ 1\ ]\\
1 & 2 & [\ 0,\ 1\ ]\\
\hline 
2 & 1 & [\ 2,\ 1\ ]\\
2 & 2 & [\ 10,\ 9,\ 1\ ]\\
2 & 3 & [\ 4,\ 7,\ 1\ ]\\
2 & 4 & [\ 0,\ 1\ ]\\
\hline 
3 & 1 & [\ 3,\ 1\ ]\\
3 & 2 & [\ 63,\ 33,\ 3\ ]\\
3 & 3 & [\ 204,\ 149,\ 24,\ 1\ ]\\
3 & 4 & [\ 108,\ 110,\ 21,\ 1\ ]\\
3 & 5 & [\ 9,\ 18,\ 3\ ]\\
3 & 6 & [\ 0,\ 1\ ]\\
\hline 
4 & 1 & [\ 4,\ 1\ ]\\
4 & 2 & [\ 220,\ 82,\ 6\ ]\\
4 & 3 & [\ 2448,\ 1194,\ 156,\ 6\ ]\\
4 & 4 & [\ 6720,\ 4010,\ 695,\ 46,\ 1\ ]\\
4 & 5 & [\ 3984,\ 2886,\ 575,\ 42,\ 1\ ]\\
4 & 6 & [\ 516,\ 510,\ 108,\ 6\ ]\\
4 & 7 & [\ 16,\ 34,\ 6\ ]\\
4 & 8 & [\ 0,\ 1\ ]\\
\hline 
5 & 1 & [\ 5,\ 1\ ]\\
5 & 2 & [\ 565,\ 165,\ 10\ ]\\
5 & 3 & [\ 14300,\ 5460,\ 600,\ 20\ ]\\
5 & 4 & [\ 113160,\ 52150,\ 7790,\ 470,\ 10\ ]\\
5 & 5 & [\ 283560,\ 149414,\ 26505,\ 2085,\ 75,\ 1\ ]\\
5 & 6 & [\ 183120,\ 107864,\ 21050,\ 1815,\ 70,\ 1\ ]\\
5 & 7 & [\ 31680,\ 21240,\ 4370,\ 360,\ 10\ ]\\
5 & 8 & [\ 1940,\ 1640,\ 340,\ 20\ ]\\
5 & 9 & [\ 40,\ 60,\ 10\ ]\\
5 & 10 & [\ 0,\ 1\ ]\\
\hline
\end{array}
\]
\caption{Polynomials $f_{e,i}(x)$ for $1\leq e\leq 5$ and $1\leq i\leq 2e$}
\label{tbl4}
\end{table}

\normalsize

\begin{table}[H]
\small
\[
 \barr{|c|c|l|} \hline e & i &
   [\ a_0,\ a_1,\ \dots,\ a_\ell\ ]\text{ where }f_{e,i}(x) = \sum_{i=0}^\ell a_i x^i 
\\\hline 
6 & 1 & [\ 6,\ 1\ ]\\
6 & 2 & [\ 1212,\ 291,\ 15\ ]\\
6 & 3 & [\ 59130,\ 18475,\ 1725,\ 50\ ]\\
6 & 4 & [\ 987720,\ 374620,\ 48370,\ 2600,\ 50\ ]\\
6 & 5 & [\ 6271920,\ 2743560,\ 433590,\ 31845,\ 1110,\ 15\ ]\\
6 & 6 & [\ 14566320,\ 7068684,\ 1263364,\ 109245,\ 4915,\ 111,\ 1\ ]\\
6 & 7 & [\ 9755280,\ 5136720,\ 986674,\ 91455,\ 4405,\ 105,\ 1\ ]\\
6 & 8 & [\ 1946520,\ 1108620,\ 221760,\ 20505,\ 900,\ 15\ ]\\
6 & 9 & [\ 156360,\ 98860,\ 20230,\ 1700,\ 50\ ]\\
6 & 10 & [\ 5490,\ 4225,\ 855,\ 50\ ]\\
6 & 11 & [\ 72,\ 93,\ 15\ ]\\
6 & 12 & [\ 0,\ 1\ ]\\
\hline 
7 & 1 & [\ 7,\ 1\ ]\\
7 & 2 & [\ 2296,\ 469,\ 21\ ]\\
7 & 3 & [\ 189714,\ 50589,\ 4116,\ 105\ ]\\
7 & 4 & [\ 5798310,\ 1881530,\ 212765,\ 10150,\ 175\ ]\\
7 & 5 & [\ 74094720,\ 27846910,\ 3904355,\ 260225,\ 8365,\ 105\ ]\\
7 & 6 & [\ 405805680,\ 170232678,\ 27602169,\ 2252775,\ 98910,\ 2247,\ 21\ ]\\
7 & 7 & [\ 892563840,\ 406170120,\ 72388246,\ 6638779,\ 341320,\ 9940,\ 154,\ 1\ ]\\
7 & 8 & [\ 612768240,\ 296719296,\ 56041608,\ 5434744,\ 295365,\ 9079,\ 147,\ 1\ ]\\
7 & 9 & [\ 134802360,\ 68845350,\ 13437879,\ 1314600,\ 69300,\ 1890,\ 21\ ]\\
7 & 10 & [\ 12867120,\ 6962060,\ 1382990,\ 130375,\ 5950,\ 105\ ]\\
7 & 11 & [\ 594090,\ 350280,\ 70175,\ 5880,\ 175\ ]\\
7 & 12 & [\ 13146,\ 9282,\ 1827,\ 105\ ]\\
7 & 13 & [\ 112,\ 133,\ 21\ ]\\
7 & 14 & [\ 0,\ 1\ ]\\
\hline 
8 & 1 & [\ 8,\ 1\ ]\\
8 & 2 & [\ 3984,\ 708,\ 28\ ]\\
8 & 3 & [\ 517720,\ 120092,\ 8624,\ 196\ ]\\
8 & 4 & [\ 25914336,\ 7352044,\ 738038,\ 31556,\ 490\ ]\\
8 & 5 & [\ 579902400,\ 191868740,\ 24093090,\ 1454250,\ 42630,\ 490\ ]\\
8 & 6 & [\ 6105536640,\ 2271563952,\ 333946144,\ 25135180,\ 1032220,\ 22148,\ 196\ ]\\
8 & 7 & [\ 29965844160,\ 12181569792,\ 2001174392,\ 174107332,\ 8743280,\ 255808,\ 4088,\ 28\ ]\\
8 & 8 & [\ 63223332480,\ 27451001136,\ 4862287996,\ 462635796,\ 25989929,\ 886704,\ 18074,\ 204,\ 1\ ]\\
8 & 9 & [\ 44191728000,\ 20147946672,\ 3743518540,\ 373361884,\ 21969689,\ 785008,\ 16730,\ 196,\ 1\ ]\\
8 & 10 & [\ 10460701440,\ 4951790256,\ 945440832,\ 95549692,\ 5573400,\ 189784,\ 3528,\ 28\ ]\\
8 & 11 & [\ 1136459520,\ 555549120,\ 107378264,\ 10671500,\ 581700,\ 16660,\ 196\ ]\\
8 & 12 & [\ 64033200,\ 32382980,\ 6268570,\ 590730,\ 27230,\ 490\ ]\\
8 & 13 & [\ 1952832,\ 1039500,\ 199766,\ 16548,\ 490\ ]\\
8 & 14 & [\ 31080,\ 18676,\ 3472,\ 196\ ]\\
8 & 15 & [\ 208,\ 188,\ 28\ ]\\
8 & 16 & [\ 0,\ 1\ ]\\
\hline
\earr\]
\normalsize
\caption{Polynomials $f_{e,i}(x)$ for $6\leq e \leq 8$ and $1\leq i \leq 2e$}
\label{tbl5}
\end{table}

\normalsize


\begin{thebibliography}{99}


\bibitem{alia} M. Aguiar \emph{et al.}, 
``Supercharacters, symmetric functions in noncommuting variables, and related Hopf algebras,'' 2010 preprint. {\tt{arXiv:1009.4134v1}}

\bibitem{Andre} C. A. M. Andr\'e, ``Basic characters of the unitriangular group,'' \emph{J. Algebra} \textbf{175} (1995), 287--319.

\bibitem{Andre1} C. A. M. Andr\'e, ``Basic characters of the unitriangular group (for arbitrary primes),'' \emph{Proc. Amer. Math Soc.} \text{130} (2002), 1943--1954.

\bibitem{AndreHecke} C. A. M. Andr\'e, ``Hecke algebras for the basic characters of the unitriangular groups,'' \emph{Proc. Amer.  Math. Soc.} \textbf{132} (2003), 987--996.

\bibitem{AndreAdjoint} C. A. M. Andr\'e; A. Nicol\'as, ``Supercharacters of the adjoint group of a finite radical ring,'' \emph{J. Group Theory} \textbf{11} (2008) 709--746. 

\bibitem{Bend1} E. A. Bender; A. M. Odlyzko; L. B. Richmond, ``The asymptotic number of irreducible partitions,'' \emph{European J. Combin.} \textbf{6} (1985), 1--6.

\bibitem{Bend2} E. A. Bender; L. B. Richmond, ``An asymptotic expansion for the coeffcients of some power series. II. Lagrange inversion,'' \emph{Discrete Math.} \textbf{50} (1984), 135--141.

\bibitem{BZ} N. Bergeron; M. Zabrocki, ``The Hopf algebras of symmetric functions and quasisymmetric functions in non-commutative 
variables are free and cofree,'' 2005 preprint. {\tt{arXiv:math/0509265v3}}.

\bibitem{Groth} N. Bergeron; C. Hohlweg; M. Rosas; M. Zabrocki, ``Grothendieck bialgebras, partition lattices, and symmetric functions in noncommutative variables,'' \emph{Electron. J. Combin}. \textbf{13}(1), Research Paper 75 
(2006).

\bibitem{Magma} W. Bosma; J. Cannon; C. Playoust, ``The Magma algebra system. I. The user language,'' \emph{J. Symbolic Comput.} \textbf{24} (1997) 98--128. 


\bibitem{Stan} W. Y. C. Chen; E. Y. P. Deng; R. R. X. Du; R. P. Stanley; C. H. Yan, ``Crossings and nestings of matchings and partitions,'' \emph{Trans. Amer. 
Math. Soc.} \textbf{359} (4) (2007) 1555--1575.


\bibitem{DI} P. Diaconis; I. M. Isaacs, ``Supercharacters and superclasses for algebra groups,'' \emph{Trans. Amer. Math. Soc.} \textbf{360} (2008), 2359--2392. 


\bibitem{DT} P. Diaconis; N. Thiem, ``Supercharacter formulas for pattern groups,'' \emph{Trans. Amer. Math. Soc.} \textbf{361} (2009), 3501--3533. 




\bibitem{E} A. Evseev, ``Reduction for characters of finite algebra groups,'' \emph{J. Algebra} \textbf{325} (2010), 321--351.

\bibitem{implement} A. Evseev, {\sc{Magma}} code, {\url{http://www.maths.qmul.ac.uk/~evseev/char\_mixed.m}}, 2010.

\bibitem{H} Z. Halasi, ``On the characters and commutators of finite algebra groups,'' \emph{J. Algebra} \textbf{275} (2004), 481--487.

\bibitem{Hthesis} Z. Halasi. \emph{On the representations of solvable linear groups}, PhD thesis, Central European University, Budapest, Hungary, 2009.

\bibitem{Higman} G. Higman, ``Enumerating p-groups. I. Inequalities,'' \emph{Proc. London Math. Soc.} (3) \textbf{10} (1960) 24--30. 

\bibitem{Huppert} B. Huppert. ``A remark on the character-degrees of some $p$-groups,'' \emph{Arch. Math.}, \textbf{59}  (1992) 313--318.


\bibitem{I95} I. M. Isaacs, ``Characters of groups associated with finite algebras,'' \emph{J. Algebra} \textbf{177} (1995), 708--730.




\bibitem{I} I. M. Isaacs, ``Counting characters of upper triangular groups,'' \emph{J. Algebra} \textbf{315} (2007) 698--719.

\bibitem{JZ} A. Jaikin-Zapirain, ``A counterexample to the fake degree conjecture,'' \emph{Chebyshevskiõ Sb.} \textbf{5} 
(2004), 188--192. 



\bibitem{K} A. A. Kirillov, ``Variations on the triangular theme,'' \emph{Lie groups and Lie algebras: E. B. DynkinÕs Seminar}, 43--73, Amer. Math. Soc. Transl. Ser. 2, \textbf{169} Providence, RI, 1995.


\bibitem{Klazar} M. Klazar, ``Bell numbers, their relatives, and algebraic differential equations,'' \emph{J. Combin. Theory Ser. A}, \textbf{102} (2003), 63--87.

\bibitem{Le} T. Le, ``Counting irreducible representations of large degree of the upper triangular groups,'' \emph{J. Algebra}, \textbf{324} (2010) 1803--1817.

\bibitem{Lehrer}  G.I. Lehrer, ``Discrete series and the unipotent subgroup,'' \emph{Compos. Math.} \textbf{28} (1974) 9--19. 

\bibitem{L} M. Loukaki, ``Counting characters of small degree in upper unitriangular groups,'' \emph{J. Pure Appl. Algebra.} \textbf{215} (2011) 154--160.


\bibitem{Marj1} M. Marjoram, ``Irreducible characters of small degree of the unitriangular group,'' \emph{Irish Math. Soc. Bull.} \textbf{42} (1999),  21--31. 

\bibitem{Marj2} M. Marjoram, ``Irreducible characters of Sylow p-subgroups of classical groups,'' PhD thesis, National University of Ireland, Dublin, 1997.



\bibitem{M2} E. Marberg,  ``Superclasses and supercharacters of normal pattern subgroups of the unipotent upper triangular matrix group,''  to appear in \emph{J. Algebr. Comb.}, {\tt{arXiv:1005.4151v2}}.

\bibitem{supp0} E. Marberg, ``Iterative character constructions for algebra groups,'' preprint. {\tt{arXiv:1012.2191}} (2010).


\bibitem{supp1} E. Marberg, ``Exotic characters of unitriangular matrix groups,''  to appear in \emph{J. Pure. Appl. Alg.}, 
{\tt{arXiv:1012.2192}}.

\bibitem{implement2} E. Marberg, {\sc{Magma}} code, {\url{http://math.mit.edu/~emarberg/Combinatorial\_Methods\_Code.m}}, 2011.


\bibitem{O} B. Otto,  ``Constituents of supercharacters and Kirillov functions,'' \emph{Archiv der Mathematik}, \textbf{94} (2010), 319--326.

\bibitem{Sangroniz} J. Sangroniz, ``Characters of algebra groups and unitriangular groups,'' \emph{Finite groups 2003}, 335--349, Walter de Gruyter, Berlin, 2004.

\bibitem{OEIS} N. J. A. Sloane, editor (2003), The On-Line Encyclopedia of Integer Sequences, published electronically at 
\url{http://www.research.att.com/~njas/sequences/}.


\bibitem{Thiem} N. Thiem,  ``Branching rules in the ring of superclass functions of unipotent upper-triangular matrices,''  \emph{J. Algebr. Comb.} \textbf{31} (2009), 267--298.




\bibitem{VeraLopez13} A. Vera-Lopez; J. M. Arregi, ``Conjugacy classes in unitriangular matrices,'' \emph{J. Linear Algebra Appl.} \textbf{370}(2003), 85--124.


\bibitem{Yan} N. Yan, \emph{Representation theory of the finite unipotent linear groups}, PhD thesis, Department of Mathematics, University of Pennsylvania, 2001.

\end{thebibliography}
\end{document}